\newtheorem{theorem}{Theorem}[section]
\newtheorem{lemma}[theorem]{Lemma}
\newtheorem{coro}[theorem]{Corollary}
\theoremstyle{definition}
\theoremstyle{remark}
\newtheorem{remark}[theorem]{Remark}
\numberwithin{equation}{section}
\begin{document}
\title[Gradient estimates and Liouville theorems]
{Gradient estimates for a nonlinear parabolic equation and Liouville theorems}

\author{Jia-Yong Wu}
\address{Department of Mathematics, Shanghai Maritime University, Shanghai 201306, P. R. China}
\email{jywu81@yahoo.com}

\subjclass[2010]{Primary 53C21, 58J35; Secondary 35B53, 35K55}
\date{Received: September 05, 2017; Revised: July 22, 2018.}
\keywords{Nonlinear parabolic equation; gradient estimate;
Liouville theorem; Yamabe-type problem; smooth metric measure space;
Bakry-\'{E}mery Ricci tensor.}
\begin{abstract}
We establish local elliptic and parabolic gradient estimates for positive smooth solutions
to a nonlinear parabolic equation on a smooth metric measure space. As applications, we
determine various conditions on the equation's coefficients and the growth of solutions
that guarantee the nonexistence of nontrivial positive smooth solutions to many special
cases of the nonlinear equation. In particular, we apply gradient estimates to discuss
some Yamabe-type problems of complete Riemannian manifolds and smooth metric measure spaces.
\end{abstract}
\maketitle

\section{Introduction and main results}\label{Int1}
In this paper we will study gradient estimates for positive smooth solutions $u(x,t)$ to
a parabolic equation
\begin{equation}\label{Yameq}
\left(\Delta_f-\frac{\partial}{\partial t}\right)u+\mu(x,t)u+p(x,t)u^\alpha+q(x,t)u^\beta=0
\end{equation}
on a smooth metric measure space $(M,g,e^{-f}dv_g)$, where $\mu(x,t)$, $p(x,t)$ and $q(x,t)$
are all smooth space-time functions, and $\alpha,\beta\in\mathbb{R}$.
As applications, we give Liouville-type theorems for various special cases of the equation
\eqref{Yameq}. In particular, since the equation \eqref{Yameq} is related to Yamabe-type
problems (see the explanation below), we also apply gradient estimates to study some
Yamabe-type problems of complete Riemannian manifolds and smooth metric measure spaces.

\vspace{.1in}

A smooth metric measure space is a tuple $(M,g,e^{-f}dv_g)$ of an $n$-dimensional complete
Riemannian manifold $(M,g)$, and a weighted measure $e^{-f}dv_g$ determined by some
$f\in C^\infty(M)$ and the Riemannian volume element $dv_g$ of the metric $g$. Such spaces
arise in many contexts, for example as collapsed measured Gromov-Hausdorff limits \cite{[Lott1]}.
On $(M,g,e^{-f}dv_g)$, the $f$-Laplacian is defined by
\[
\Delta_f=\Delta-\nabla f\cdot\nabla,
\]
where $\Delta$ is the usual Laplacian, which is self-adjoint with respect to $e^{-f}dv_g$.
For any number $m\geq 0$, the $m$-Bakry-\'Emery Ricci tensor introduced by Bakry and
\'Emery \cite{[BE]} is defined by
\[
\mathrm{Ric}^m_f:=\mathrm{Ric}+\mathrm{Hess}\, f-\frac{1}{m}df\otimes df,
\]
where $\mathrm{Ric}$ is the Ricci tensor of $(M,g)$, and $\mathrm{Hess}$ is the
Hessian of metric $g$. When $m=0$, it means that $f$ is constant and $\mathrm{Ric}^m_f$
returns to the usual Ricci tensor $\mathrm{Ric}$. In \cite{[Lott2]}, the weighted scalar
curvature related to $\mathrm{Ric}^m_f$ is defined by
\[
\mathrm{S}^m_f:=\mathrm{S}+2 \Delta f-\frac{m+1}{m}|\nabla f|^2,
\]
where $\mathrm{S}$ is the scalar curvature of $(M,g)$. In general, $\mathrm{S}^m_f$
is not the trace of $\mathrm{Ric}^m_f$, except when $f$
is constant. When $m\to\infty$, we have the Perelman's scalar curvature
(see \cite{[Per]})
\[
\mathrm{S}^{\infty}_f:=\mathrm{S}+2 \Delta f-|\nabla f|^2
\]
and the $(\infty-)$Bakry-\'Emery Ricci tensor
\[
\mathrm{Ric}_f:=\mathrm{Ric}^\infty_f.
\]
It is easy to see that $\mathrm{Ric}^m_f\geq c$ implies $\mathrm{Ric}_f\geq c$,
but not vice versa.

On a smooth metric measure space $(M,g,e^{-f}dv_g)$, if
\[
\mathrm{Ric}_f=\lambda\, g
\]
for some $\lambda\in\mathbb{R}$, then $(M,g,e^{-f}dv_g)$ is a gradient Ricci soliton,
which is a generalization of an Einstein manifold. Gradient
Ricci solitons play a fundamental role in the formation of singularities of the Ricci
flow, and have been studied by many authors; see \cite{[Cao],[Ham]} and references
therein for nice surveys.

\vspace{.1in}

There have been many gradient estimates and Liouville-type theorems about special
cases of the equation \eqref{Yameq}. In 1980s, B. Gidas and J. Spruck \cite{[GS]}
studied the equation
\begin{equation}\label{equa1}
\Delta u+p(x)u^\alpha=0,\quad1\leq\alpha<\frac{n+2}{n-2}
\end{equation}
on an $n$-dimensional manifold. The case $\alpha=3$ is relevant to Yang-Mills equations
(see \cite{[CGS]}). The case $\alpha<0$ is related to a steady state of the thin film
(see \cite{[GW]}). B. Gidas and J. Spruck \cite{[GS]} proved that any nonnegative solution
to the equation \eqref{equa1} is identically zero when the Ricci tensor of manifold
is nonnegative. Y. Yang \cite{[Yang]} showed that if $\alpha<0$ and $p(x)$ is positive
constant, then the equation \eqref{equa1} does not admit any positive solution on
a complete manifold with the nonnegative Ricci tensor. J.-Y. Li \cite{[JLi]} proved the
Gidas-Spruck's result under some weaker restrictions of $p(x)$ for $1<\alpha<\frac{n}{n-2}$
($n\geq 4$). He also proved Li-Yau gradient estimates and Harnack
inequalities for the nonlinear parabolic equation
\begin{equation}\label{equa2}
\left(\Delta-\frac{\partial}{\partial t}\right)u+p(x,t)u^\alpha=0,\quad \alpha>0
\end{equation}
on a manifold. In biomathematics, the equation \eqref{equa2} could be interpreted as the
population dynamics (see \cite{[CaCo]}). Recently, X. Zhu \cite{[Zhu1],[Zhu2]} gave
elliptic gradient estimates and Liouville-type theorems for positive ancient solutions
to the equation \eqref{equa2}.

Apart from the relation to the above equations, the famous and widely studied special example
of the equation \eqref{Yameq} is related to conformally deformation of the scalar curvature
on a manifold. Indeed, for any $n$-dimensional ($n\geq 3$) complete manifold $(M,g)$, consider
a pointwise conformal metric
\[
\tilde{g}=u^{\frac{4}{n-2}}g
\]
for some $0<u\in C^\infty(M)$. Then the scalar curvature $\mathrm{\tilde{S}}$
of metric $\tilde{g}$ related to the scalar curvature $\mathrm{S}$ of metric $g$ is given by
(see \cite{[MRS]})
\begin{equation}\label{maineq}
\Delta u-\frac{n-2}{4(n-1)}\mathrm{S}\,u+\frac{n-2}{4(n-1)}\mathrm{\tilde{S}}\,u^{\frac{n+2}{n-2}}=0,
\end{equation}
which is a special form of equation \eqref{Yameq}. If $M$ is compact and $\mathrm{\tilde{S}}$
is constant, the existence of a positive solution $u$ is the well-known Yamabe problem and
it has been solved in the affirmative by the combined efforts of Yamabe \cite{[Yama]},
Trudinger \cite{[Tru]}, Aubin \cite{[Aub]} and Schoen \cite{[Sc84]}; see the survey
\cite{[LP]} for more details. However, if $M$ is noncompact ($\mathrm{\tilde{S}}$ is still
constant), Z. Jin \cite{[Jin]} gave examples of complete metrics on the noncompact manifold
on which there do not exist a positive smooth solution of \eqref{maineq}. When $\mathrm{\tilde{S}}$
is a smooth function, the geometry of manifolds plays a large role in the
existence and nonexistence of positive solutions of \eqref{maineq} on compact or noncompact manifolds.
The interested reader can refer to \cite{[Sch],[J93],[Bis],[LTY],[Zhang],[MRS]} and references therein.

\vspace{.1in}

Another important reason of studying the equation \eqref{Yameq} is that a static form of
equation \eqref{Yameq} is related to the weighted Yamabe problem posed by J. Case \cite{[Case]}.
Recall that, for any $m\ge0$, Case \cite{[Case]} introduced the weighted Yamabe quotient
\[
\mathcal{Q}(u):= \frac{\left(\int_M|\nabla u|^2+\frac{m+n-2}{4(m+n-1)}\mathrm{S}^m_f u^2\right)
\left(\int_M|u|^{\frac{2(m+n-1)}{m+n-2}}e^{\frac fm}\right)^{\frac{2m}{n}}}{\left(\int_M |u|^{\frac{2(m+n)}{m+n-2}}\right)^{\frac{2m+n-2}{n}}}
\]
on a smooth metric measure space $(M,g,e^{-f}dv_g)$, where all integrals are taken with
respect to the weighted measure $e^{-f}dv_g$. The weighted Yamabe quotient is conformally
invariant in the sense that if
\[
\left(M^n,\tilde{g}, e^{-\tilde{f}} dv_{\tilde{g}}\right)
=\left(M^n,e^{\frac{2\rho}{m+n-2}}g, e^{\frac{(m+n)\rho}{m+n-2}}e^{-f}dv_g\right)
\]
for some $\rho\in C^\infty(M)$, then $\mathcal{\tilde{Q}}(u)=\mathcal{Q}(e^{\frac{\rho}{2}}u)$
(see \cite{[Case]}). The weighted Yamabe constant is defined by
\[
\Lambda[g, e^{-f}dv_g]:=\inf\left\{\mathcal{Q}(u)|\,\,0<u\in C^\infty(M)\right\},
\]
which is a generalization of the Yamabe constant. Indeed, if $f=0$ and $m=0$, the
weighted Yamabe constant returns to the classical Yamabe constant. In \cite{[Case]} Case
observed that $u$ is a critical point of the weighted Yamabe quotient
$\mathcal{Q}(u)$ on a smooth metric measure space $(M,g,e^{-f}dv_g)$ if and only if it satisfies
\begin{equation}\label{weYaeq}
\Delta_f u-\frac{m+n-2}{4(m+n-1)}\mathrm{S}^m_fu-c_1e^{\frac fm}u^{\frac{m+n}{m+n-2}}+c_2u^{\frac{m+n+2}{m+n-2}}=0,
\end{equation}
which is a special elliptic case of \eqref{Yameq} in some setting. Here,
\begin{align*}
c_1 & = \frac{2m(m+n-1)\mathcal{Q}(u)}{n(m+n-2)}\left(\int_M u^{\frac{2(m+n)}{m+n-2}}\right)^{\frac{2m+n-2}{n}}
\left(\int_M u^{\frac{2(m+n-1)}{m+n-2}}e^{\frac fm}\right)^{-\frac{2m+n}{n}} , \\
c_2 & = \frac{(2m+n-2)(m+n)\mathcal{Q}(u)}{n(m+n-2)}\left(\int_M u^{\frac{2(m+n)}{m+n-2}}\right)^{\frac{2m-2}{n}}
\left(\int_M u^{\frac{2(m+n-1)}{m+n-2}}e^{\frac fm}\right)^{-\frac{2m}{n}},
\end{align*}
where all integrals are taken with respect to $e^{-f}dv_g$. Obviously, $c_1$ and $c_2$
have the same sign. When $\Lambda[g, e^{-f}dv_g]=0$, we have $c_1=c_2=0$ and the
critical point of $\mathcal{Q}$ is in fact a minimizer of $\Lambda$. Case \cite{[Case]}
proved that minimizers always exist on a compact smooth metric measure space provided the
weighted Yamabe constant is strictly less than its value on Euclidean space.

\vspace{.1in}

In this paper, we will give local elliptic and parabolic gradient estimates for positive
solutions to the equation \eqref{Yameq} on a smooth metric measure space with the
Bakry-\'Emery Ricci tensor bounded below. As applications, we will determine various
conditions on the growth of solutions and coefficients that guarantee the nonexistence
of nontrivial positive smooth solutions to many special cases of the equation
\eqref{Yameq}. In particular, we can apply gradient estimates to analyze Yamabe-type
problems of equations \eqref{maineq} and \eqref{weYaeq} on a complete manifold and a smooth
metric measure space, respectively.

\vspace{.1in}

In order to state the results, we introduce some notations. On an $n$-dimensional complete
smooth metric measure space $(M,g,e^{-f}dv)$, let $\nabla$ and $|\cdot|$ stand for the
Levi-Civita connection and the norm with respect to metric $g$, respectively. For a fixed point
$x_0\in M$ and $R>0$, let $r(x)$ (or $d(x,x_0)$) denote a distance function to $x$ from $x_0$
with respect to $g$, and $B(x_0,R)$ denote the geodesic ball centered at $x_0$ of radius $R$.
In the elliptic gradient estimate setting, let $Q_{R,T}$ be
\[
Q_{R,T}:\equiv B(x_0,R)\times[t_0-T,t_0]\subset M\times(-\infty,\infty),\quad t_0\in \mathbb{R}\,\,\mathrm{and}\,\,T>0.
\]
In the parabolic gradient estimate setting, let $H_{R,T}$ be
\[
H_{R,T}:\equiv B(x_0,R)\times[0,T],\quad T>0.
\]
For any $\mu\in C^\infty(Q_{R,T})$, denote
\[
\mu^+:=\sup_{(x,t)\in Q_{R,T}}\{\mu^+(x,t),0\}\quad \mathrm{and}\quad
\mu^-:=\inf_{(x,t)\in Q_{R,T}}\{\mu^-(x,t),0\},
\]
where $\mu^+(x,t):=\max\{\mu(x,t),0\}$ and $\mu^-(x,t):=\min\{\mu(x,t),0\}$.
For $\mu\in C^\infty(H_{R,T})$, we similarly define $\mu^+$ and $\mu^-$
in $H_{R,T}$ as above. We also introduce the geometric quantities
\[
\sigma:=\max_{\{x|d(x,x_0)=1\}}\Delta_f\,r(x)\quad \mathrm{and}\quad \sigma^+:=\max\{\sigma,0\},
\]
which will appear in our theorems.

\vspace{.1in}

We now give one of main theorems, a local elliptic (space-only) gradient estimates for
positive smooth solutions to the equation \eqref{Yameq} when $\mathrm{Ric}_f$ is bounded below.
\begin{theorem}\label{main}
Let $(M,g,e^{-f}dv)$ be an $n$-dimensional complete smooth metric measure space.
Assume that $\mathrm{Ric}_f\geq-(n-1)K$ for some constant $K\geq0$ in $B(x_0,R)$,
where $x_0\in M$ and $R\geq2$.  Let $0<u(x,t)\leq D$ for some constant $D$, be a
smooth solution to the equation \eqref{Yameq} in $Q_{R,T}:=B(x_0,R)\times[t_0-T,t_0]$.
There exists a constant $c$ depending only on $n$, such that
\begin{equation*}
\begin{aligned}
|\nabla\ln u|\leq c\left(1+\ln \frac Du\right)&
\Bigg[\frac 1R+\sqrt{\frac{\sigma^+}{R}}+\frac{1}{\sqrt{t-t_0+T}}+\sqrt{K}
+\sqrt{\mu^+}+\sup_{Q_{R,T}}\kern-4pt|\nabla\mu|^{\frac 13}\\
&+\sqrt{[(\alpha-1)p]^++p^+}\,\sup_{Q_{R,T}}\{u^{\frac{\alpha-1}{2}}\}
+\sup_{Q_{R,T}}|\nabla p|^{\frac 13}\sup_{Q_{R,T}}\{u^{\frac{\alpha-1}{3}}\}\\
&+\sqrt{[(\beta-1)q]^++q^+}\,\sup_{Q_{R,T}}\{u^{\frac{\beta-1}{2}}\}
+\sup_{Q_{R,T}}|\nabla q|^{\frac 13}\sup_{Q_{R,T}}\{u^{\frac{\beta-1}{3}}\}\Bigg]
\end{aligned}
\end{equation*}
in $Q_{R/2, T}$ with $t\neq t_0-T$.
\end{theorem}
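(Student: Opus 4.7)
The plan is to follow a Souplet--Zhang style space-only gradient estimate adapted to the weighted Laplacian and to the many nonlinear coefficients of \eqref{Yameq}. First I would normalize by setting $h:=\ln(u/D)\leq 0$ and rewrite \eqref{Yameq} in the form
\[
(\Delta_f-\partial_t)h=-|\nabla h|^2-\mu-p\,u^{\alpha-1}-q\,u^{\beta-1}.
\]
The quantity to bound is
\[
F:=\frac{|\nabla h|^2}{(1-h)^2},
\]
since the desired inequality is exactly $\sqrt{F}\leq c(1-h)^{-1}(1+\ln(D/u))(\cdots)=c(\cdots)$, once one recalls that $1-h=1+\ln(D/u)>0$.

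Next I would compute $(\Delta_f-\partial_t)F$ via the weighted Bochner formula
\[
\Delta_f|\nabla h|^2=2|\mathrm{Hess}\,h|^2+2\langle\nabla h,\nabla\Delta_f h\rangle+2\,\mathrm{Ric}_f(\nabla h,\nabla h),
\]
substituting the expression for $\Delta_f h$ obtained above. The assumption $\mathrm{Ric}_f\geq-(n-1)K$ controls the Ricci term by $-2(n-1)K|\nabla h|^2$, while the Hessian term combined with $\Delta_f\bigl[(1-h)^{-2}\bigr]$ produces the dominant positive contribution, which after Cauchy--Schwarz is bounded below by a multiple of $F^2(1-h)^2$. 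The key algebraic identities are $\nabla(p\,u^{\alpha-1})\cdot\nabla h=u^{\alpha-1}\langle\nabla p,\nabla h\rangle+(\alpha-1)p\,u^{\alpha-1}|\nabla h|^2$, and likewise for $q$. Thus in the coefficient of $|\nabla h|^2$ only the positive parts $[(\alpha-1)p]^+$ and $[(\beta-1)q]^+$ matter, while the linear--in--$|\nabla h|$ terms $\langle\nabla p,\nabla h\rangle u^{\alpha-1}$ and the stray $p\,u^{\alpha-1}|\nabla h|$ get redistributed against the dominant $F^2(1-h)^2$ by Young's inequality in the two forms $ab\leq\varepsilon a^2+C_\varepsilon b^2$ and $abc\leq\varepsilon a^3+C_\varepsilon(b^3+c^3)$; the cube roots in the statement encode the latter application for the gradient terms.

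Finally I would pick a standard space-time cutoff $\phi(x,t)=\eta(r(x)/R)\,\tau(t)$, with $\eta$ a smooth nonincreasing bump which equals $1$ on $[0,1/2]$ and vanishes outside $[0,1]$, satisfying $|\eta'|^2/\eta\leq C$ and $|\eta''|\leq C$, and with $\tau\in C^1([t_0-T,t_0])$ vanishing at the initial time and equal to $1$ near $t_0$; this produces the $1/\sqrt{t-t_0+T}$ term. Using the Wei--Wylie weighted Laplacian comparison under only a lower bound on $\mathrm{Ric}_f$, I can estimate $\Delta_f r$, and hence $\Delta_f\phi$, in terms of $R^{-2}$, $\sqrt{K}/R$ and $\sigma^+/R$; this is where the quantity $\sigma$ in the statement enters. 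Applying the maximum principle to the space-time function $\phi F$ on $Q_{R,T}$, using $\nabla(\phi F)=0$ and $(\Delta_f-\partial_t)(\phi F)\leq 0$ at an interior maximum $(x_1,t_1)$, the differential inequality for $F$ becomes an algebraic inequality $A(\phi F)^2\leq B(\phi F)+C$ in which every $B$ and $C$ is a sum of the terms appearing in the theorem, squared or cubed. Solving for $\phi F$ and restricting to $Q_{R/2,T}$ (where $\phi\equiv 1$) gives the stated bound after taking square roots. The main obstacle is the bookkeeping: with six nonlinear ingredients ($\mu,p u^\alpha,q u^\beta$ and their gradients) plus the weighted curvature corrections, one must choose each Young's inequality weight so that absolutely every term is absorbed into $F^2(1-h)^2$ without losing the correct power of $(1-h)$, and in particular so that the final bound involves $(1-h)$ only to the first power on the right-hand side.
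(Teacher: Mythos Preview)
Your proposal is correct and follows essentially the same route as the paper: normalize to $h=\ln(u/D)$, set $\omega=|\nabla h|^2/(1-h)^2$, derive a differential inequality for $\omega$ via the weighted Bochner formula (the paper's Lemma~2.1), then localize with a Souplet--Zhang space-time cutoff, apply the Wei--Wylie $f$-Laplacian comparison to produce the $\sigma^+/R$ term, and absorb every lower-order piece into the dominant $(1-h)\omega^2$ term by Young's inequality. The only cosmetic discrepancies are that the dominant positive term is $(1-h)\omega^2$ rather than $(1-h)^2\omega^2$, and the $p^+$ contribution in the statement comes from the extra $\tfrac{1}{1-h}p\,u^{\alpha-1}\omega$ term (not a linear-in-$|\nabla h|$ stray), but these are bookkeeping details you would sort out in the actual write-up.
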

\begin{remark}\label{rem1}
If $f$ is constant, the term $\sqrt{\frac{\sigma^+}{R}}$ is unnecessary in the above
estimate. If $\mu(x,t)$, $p(x,t)$ and $q(x,t)$ are identically zero, the theorem returns
to \cite{[Wu15]}. Recently, N.-T. Dung et al. \cite{[DKN]} proved similar results when
$\mu(x,t)$, $p(x,t)$, $q(x,t)$, $\alpha$ and $\beta$ are special constants.
\end{remark}

\medskip

Besides, we can give a local parabolic (space-time) gradient estimate for positive smooth
solutions to the equation \eqref{Yameq} when $\mathrm{Ric}^m_f$ is bounded below.
\begin{theorem}\label{mainpara}
Let $(M,g,e^{-f}dv)$ be an $n$-dimensional complete smooth metric measure space.
Assume that $\mathrm{Ric}^m_f\geq-(m+n-1)K$ ($m<\infty$) for some constant $K\geq0$ in $B(x_0,2R)$,
where $x_0\in M$ and $R>0$.  Let $u(x,t)$ be a positive smooth solution to the equation
\eqref{Yameq} in $H_{2R,T}:=B(x_0,2R)\times[0,T]$. Also assume that
\[
|\nabla p|\leq a_1,\,\,\,\Delta_fp\geq b_1\,\,\,\mathrm{for\,\,\,some\,\,\,constants}\,\,\, a_1\,\,\,\mathrm{and}\,\,\,b_1;
\]
\[
|\nabla q|\leq a_2,\,\,\,\Delta_fq\geq b_2\,\,\,\mathrm{for\,\,\,some\,\,\,constants}\,\,\, a_2\,\,\,\mathrm{and}\,\,\,b_2;
\]
\[
|\nabla\mu|\leq a_3,\,\,\,\Delta_f\mu\geq b_3\,\,\,\mathrm{for\,\,\,some\,\,\,constants}\,\,\, a_3\,\,\,\mathrm{and}\,\,\,b_3
\]
in $B(x_0,2R)$. For any $\lambda>1$ and $\varepsilon\in(0,1)$ satisfying $\Psi\geq 0$,
there exists a universal positive constant $c_1$ independent of the geometry of $(M,g,e^{-f}dv)$
such that
\begin{equation*}
\begin{aligned}
\frac{|\nabla u|^2}{\lambda u^2}+pu^{\alpha-1}&+qu^{\beta-1}+\mu
-\frac{u_t}{u}\leq\frac{(m+n)\lambda}{2t}+\sqrt{\frac{m+n}{2}}\Psi^{\frac12}\\
&+\frac{m+n}{2R^2}\lambda\left[(m+n)c_1(1+R\sqrt{K})+2c^2_1
+\frac{(m+n)c^2_1\lambda^2}{4(\lambda-1)}\right]\\
&+\frac{m+n}{2}\lambda\left\{\big[(\alpha-1)p\big]^+\sup_{H_{2R,T}}\{u^{\alpha-1}\}+\big[(\beta-1)q\big]^+
\sup_{H_{2R,T}}\{u^{\beta-1}\}\right\}
\end{aligned}
\end{equation*}
in $B(x_0,R)\times(0,T]$, where
\[
\Psi:=\frac32\left[\frac{(m+n)\lambda^2}{4\varepsilon(\lambda-1)^2}\right]^{\frac13}\gamma^{\frac43}
+\frac{(m+n)\lambda^2\tilde{K}^2}{2(1-\varepsilon)(\lambda-1)^2}
-\lambda\left[\inf_{H_{2R,T}}\left(u^{\alpha-1}b_1+u^{\beta-1}b_2\right)+b_3\right],
\]
\[
\gamma:=a_1\,|\lambda\alpha-1|\sup_{H_{2R,T}}\{u^{\alpha-1}\}+a_2\,|\lambda\beta-1|\sup_{H_{2R,T}}\{u^{\beta-1}\}+a_3(\lambda-1)
\]
and
\[
\tilde{K}{:=}(m+n-1)K-\frac 12\big[(\alpha-1)(\lambda\alpha-1)p\big]^-\kern-3pt\sup_{H_{2R,T}}\{u^{\alpha-1}\}
-\frac 12\big[(\beta-1)(\lambda\beta-1)q\big]^-\kern-3pt\sup_{H_{2R,T}}\{u^{\beta-1}\}.
\]
\end{theorem}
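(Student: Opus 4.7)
The plan is to adapt the Li--Yau maximum principle scheme to the $f$-Laplacian and to the nonlinear source $\mu u+pu^\alpha+qu^\beta$. Setting $h:=\log u$ and $Q:=\mu+pu^{\alpha-1}+qu^{\beta-1}$, equation \eqref{Yameq} becomes $\Delta_f h+|\nabla h|^2-h_t+Q=0$. I introduce the Li--Yau quantity
\[
w:=\frac{|\nabla h|^2}{\lambda}+Q-h_t,
\]
which is exactly the expression on the left-hand side of the stated estimate. The identity $\Delta_f h=-\tfrac{\lambda-1}{\lambda}|\nabla h|^2-w$ derived from the equation will be used throughout to trade $\Delta_f h$ for $|\nabla h|^2$ and $w$, and expanding $(\Delta_f h)^2=\bigl(\tfrac{\lambda-1}{\lambda}|\nabla h|^2+w\bigr)^2$ will furnish the $|\nabla h|^4$ that drives the quadratic estimate.

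Next I apply $L:=\Delta_f-\partial_t$ to $w$. The weighted Bochner identity, together with the Kato-type bound $|\mathrm{Hess}\,h|^2+\tfrac{(\nabla f\cdot\nabla h)^2}{m}\geq\tfrac{(\Delta_f h)^2}{m+n}$ and the hypothesis $\mathrm{Ric}^m_f\geq-(m+n-1)K$, furnishes
\[
\begin{aligned}
Lw&\geq\frac{2(\Delta_f h)^2}{\lambda(m+n)}-2\langle\nabla h,\nabla w\rangle+\frac{2(\lambda-1)}{\lambda}\langle\nabla h,\nabla Q\rangle\\
&\quad-\frac{2(m+n-1)K}{\lambda}|\nabla h|^2+\Delta_f Q,
\end{aligned}
\]
after commuting $\Delta_f$ with $\partial_t$, using $L(Q-h_t)=2\langle\nabla h,\nabla h_t\rangle+\Delta_f Q$, and eliminating $\nabla h_t$ via $\nabla w=\tfrac1\lambda\nabla|\nabla h|^2+\nabla Q-\nabla h_t$. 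Expanding $\Delta_f(pu^{\alpha-1})$ using $u^{\alpha-1}=e^{(\alpha-1)h}$ and substituting the formula for $\Delta_f h$ produces, for the $p$-piece, a coefficient of $|\nabla h|^2$ equal to $\tfrac{(\alpha-1)(\lambda\alpha-1)}{\lambda}pu^{\alpha-1}$ and a cross term with $\nabla p$ of coefficient $\tfrac{2(\lambda\alpha-1)}{\lambda}u^{\alpha-1}$; the parallel expansions for $qu^{\beta-1}$ and $\mu$ account for the remaining components of $\tilde K$ and $\gamma$.

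A Young inequality of the shape $2|\langle\nabla h,X\rangle|\leq\varepsilon c|\nabla h|^4+c^{-1/3}\varepsilon^{-1}|X|^{4/3}$ then absorbs the $\gamma$-type gradient errors into the $|\nabla h|^4$ coming from $(\Delta_f h)^2$, while a companion splitting with weight $1-\varepsilon$ handles the $\tilde K^2|\nabla h|^2$ piece by completing the square. After these substitutions the inequality becomes $Lw\geq\tfrac{2w^2}{\lambda(m+n)}-\Psi-2\langle\nabla h,\nabla w\rangle-\mathrm{drift}\cdot w$, where the drift is the $[(\alpha-1)p]^+u^{\alpha-1}$ and $[(\beta-1)q]^+u^{\beta-1}$ expression appearing in the theorem. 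I then run the standard Calabi cutoff argument with a radial cutoff $\varphi(x)=\phi(d(x,x_0)/R)$ supported in $B(x_0,2R)$, using the Wei--Wylie weighted Laplacian comparison (which needs precisely the $m$-version, not the $\infty$-version, of the hypothesis) to get $|\nabla\varphi|^2\leq c_1^2 R^{-2}\varphi$ and $\Delta_f\varphi\geq-c_1(m+n)R^{-2}(1+R\sqrt K)$. At a positive interior maximum of $t\varphi w$ on $H_{2R,T}$, $\varphi\nabla w=-w\nabla\varphi$ and $L(t\varphi w)\leq 0$; substituting the pointwise inequality for $Lw$ produces a quadratic in $\varphi w$ whose positive root, after restricting to $B(x_0,R)$ where $\varphi\equiv1$ and dividing by $t$, is the stated estimate.

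The main obstacle will be the bookkeeping in the third step: the Young parameters $\varepsilon$ and the weights in the Cauchy splittings must be chosen so that the coefficient of $|\nabla h|^4$ keeps a universal positive value proportional to $\tfrac{(\lambda-1)^2}{\lambda^3(m+n)}$ while every lower-order contribution from $\nabla p,\nabla q,\nabla\mu$ and from $\Delta_f p,\Delta_f q,\Delta_f\mu$ is absorbed simultaneously into $\gamma^{4/3}$ and $\tilde K^2$. The explicit powers $\tfrac13$ and $\tfrac43$ in the definition of $\Psi$ are the fingerprint of a single $4$-to-$\tfrac43$ Young inequality, and the assumption $\Psi\geq0$ in the hypothesis is exactly the nondegeneracy condition ensuring that the resulting quadratic in $\varphi w$ admits a bounded positive root.
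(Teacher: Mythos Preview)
Your proposal is correct and follows essentially the same route as the paper. The only cosmetic difference is normalization: the paper builds the time factor into the Harnack quantity from the outset, working with $F:=t\bigl[|\nabla h|^2-\lambda(h_t-Q)\bigr]=t\lambda\,w$ and applying the maximum principle to $\varphi F$, whereas you keep $w$ and apply the maximum principle to $t\varphi w$; the differential inequality you derive for $Lw$ is exactly Lemma~3.1 of the paper divided by $t\lambda$, the Wei--Wylie comparison for $\Delta_f\varphi$ and the Young/Cauchy--Schwarz splittings producing the $\gamma^{4/3}$ and $\tilde K^2$ terms match, and the final step of solving the quadratic in $\varphi F$ (equivalently $t\varphi w$) is identical.
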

\begin{remark}
If $f$ is constant, $p(x,t)$ and $q(x,t)$ are identically zero, then the theorem returns
to the well-known Li-Yau gradient estimate \cite{[Li-Yau]}. More parabolic gradient estimates for
special cases of the equation \eqref{Yameq} were proved in \cite{[CCK],[CLPW],[JLi],[LD]}.
\end{remark}

Theorem \ref{main} describes local elliptic gradient estimates under only $\mathrm{Ric}_f$
bounded below, whose assumption on $\mathrm{Ric}_f$ is obviously weaker than the assumption
on $\mathrm{Ric}^m_f$ ($m<\infty$). Theorem \ref{mainpara} describes local Li-Yau gradient
estimates under the assumption on $\mathrm{Ric}^m_f$ ($m<\infty$) rather than $\mathrm{Ric}_f$,
because, according to \cite{[Wu15]}, there seems essential obstacles to obtain Li-Yau gradient
estimates for the equation \eqref{Yameq} when $\mathrm{Ric}_f$ is bounded below, even assuming
growth assumption on $f$.

\medskip

Theorems \ref{main} and \ref{mainpara} have many applications. On one hand, we apply
Theorem \ref{main} to get parabolic Liouville-type theorems for special cases of the
equation \eqref{Yameq}. Here, we only provide two typical results. More related results
will be discussed in Section \ref{sec4}.

\medskip

\begin{theorem}\label{app1}
Let $(M,g,e^{-f}dv)$ be an $n$-dimensional complete smooth metric measure space with
$\mathrm{Ric}_f\geq0$. Assume that there exist two constants $s>0$ and $\kappa>0$,
such that $\mu(x)$ and $p(x)$ in the following equation
\begin{equation}\label{Yameqapp1}
\left(\Delta_f-\frac{\partial}{\partial t}\right)u+\mu(x)u+p(x)u^\alpha=0,
\quad \alpha>1,\,\,p(x)\not\equiv0,
\end{equation}
satisfy
\begin{enumerate}
\item $\mu^+\big|_{B(x_0,R)}=o(R^{-s})$ and $\sup_{B(x_0,R)}|\nabla\mu|=o(R^{-s})$,
\, as\, $R\to\infty$;
\item $p^+|_{B(x_0,R)}=o[R^{-\kappa(\alpha-1)}]$ and $\sup_{B(x_0,R)}|\nabla p|=o[R^{-\kappa(\alpha-1)}]$,
\, as\, $R\to\infty$.
\end{enumerate}
Let $u(x,t)$ be a positive ancient solution to the equation \eqref{Yameqapp1}
(that is, a solution defined in all space and negative time) such that
\[
u(x,t)=o[(r(x)+|t|)^{\widetilde{\kappa}}]
\]
for some $\widetilde{\kappa}\in(0,\kappa)$ near infinity.
Then $u(x,t)\equiv c^{\frac{1}{\alpha-1}}$ and $\mu(x)\equiv-cp(x)$ for some
constant $c>0$.
\end{theorem}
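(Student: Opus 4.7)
The plan is to apply the elliptic gradient estimate in Theorem~\ref{main} at an arbitrary fixed point $(y,\tau)\in M\times(-\infty,0]$ to the positive ancient solution $u$ of~\eqref{Yameqapp1}, and then send the radius of the parabolic cylinder to infinity so that every summand on the right-hand side decays to $0$. Specifically, I would take $x_{0}=y$, $t_{0}=\tau$, $T=R$, and apply the estimate on $Q_{R,R}=B(y,R)\times[\tau-R,\tau]$, observing that the $q$- and $\beta$-terms vanish (as $q\equiv 0$) and that $K=0$ (as $\mathrm{Ric}_f\geq 0$). Writing $D(R):=\sup_{Q_{R,R}}u$, the growth hypothesis $u(x,t)=o((r(x)+|t|)^{\widetilde\kappa})$ together with $(x,t)\in Q_{R,R}$ gives $D(R)=o(R^{\widetilde\kappa})$ as $R\to\infty$.

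Next I would bound every summand in the bracket of Theorem~\ref{main}, evaluated at $(y,\tau)$. The purely geometric terms $1/R$, $\sqrt{\sigma^{+}/R}$, $1/\sqrt{R}$, $\sqrt{K}$ each tend to $0$; note that $\sigma$ is just a finite number attached to the unit sphere around $x_{0}$. Hypothesis~(1) yields $\sqrt{\mu^{+}}=o(R^{-s/2})$ and $\sup|\nabla\mu|^{1/3}=o(R^{-s/3})$. For the $p$-terms, hypothesis~(2) gives $\sqrt{[(\alpha-1)p]^{+}+p^{+}}\leq\sqrt{\alpha\,p^{+}}=o(R^{-\kappa(\alpha-1)/2})$ and $\sup|\nabla p|^{1/3}=o(R^{-\kappa(\alpha-1)/3})$, while the growth of $u$ contributes $\sup u^{(\alpha-1)/2}=o(R^{\widetilde\kappa(\alpha-1)/2})$ and $\sup u^{(\alpha-1)/3}=o(R^{\widetilde\kappa(\alpha-1)/3})$. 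Since $\widetilde\kappa<\kappa$, the two products are $o(R^{(\widetilde\kappa-\kappa)(\alpha-1)/2})$ and $o(R^{(\widetilde\kappa-\kappa)(\alpha-1)/3})$, both going to $0$. The prefactor $1+\ln(D(R)/u(y,\tau))=O(\log R)$ is dominated by each of these polynomial-order decays. Conclude $|\nabla u|(y,\tau)=0$, and since $(y,\tau)$ was arbitrary, $u=u(t)$ depends only on time.

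With $u=u(t)$, the equation~\eqref{Yameqapp1} becomes
\[
\mu(x)+p(x)u(t)^{\alpha-1}=\frac{u'(t)}{u(t)},
\]
so the left-hand side is independent of $x$ at each fixed $t$. If $u(t)\equiv u_{0}$ is constant, then $u_{0}\mu(x)+u_{0}^{\alpha}p(x)=0$ immediately gives $\mu(x)\equiv -c\,p(x)$ with $c:=u_{0}^{\alpha-1}>0$, as required. Otherwise, choosing $t_{1},t_{2}$ with $u(t_{1})^{\alpha-1}\neq u(t_{2})^{\alpha-1}$ and subtracting the two identities forces $p\equiv p_{0}$ constant, and then also $\mu\equiv\mu_{0}$ constant. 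The decay hypotheses then force $p_{0}\leq 0$ and $\mu_{0}\leq 0$; combined with $p\not\equiv 0$ this gives $p_{0}<0$. Consequently $u'(t)\leq p_{0}u(t)^{\alpha}<0$, hence $(u^{1-\alpha})'\geq(\alpha-1)|p_{0}|>0$, and integrating backward from $t=0$ yields
\[
u(t)^{1-\alpha}\leq u(0)^{1-\alpha}-(\alpha-1)|p_{0}||t|,
\]
which becomes nonpositive for $|t|$ large, contradicting $u>0$ throughout $(-\infty,0]$. Therefore $u(t)$ must be constant.

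The main obstacle is the balancing in the second step: every summand of the gradient bracket must simultaneously decay faster than the logarithmic prefactor, and this requires the careful choice $T=R$ together with the strict inequality $\widetilde\kappa<\kappa$ so that the growth of $u$ is beaten by the decay of $p$ and $|\nabla p|$. The subsequent step of excluding non-constant $u(t)$ is a short but essential ODE argument that crucially uses the ancient hypothesis.
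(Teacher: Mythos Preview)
Your proof is correct, and the first half (applying Theorem~\ref{main} on $Q_{R,R}$ and letting $R\to\infty$) is essentially identical to the paper's argument.

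In the ODE step your route differs from the paper's. The paper splits into cases $\mu\equiv0$ versus $\mu\not\equiv0$; in the latter it writes down the explicit general solution of the Bernoulli equation, $u^{1-\alpha}(t)=Ce^{(1-\alpha)\mu(x)t}-p(x)/\mu(x)$, and then argues from $x$-independence of the left side (and further subcases on whether $\mu$ is constant) to force $C=0$. You instead evaluate the identity $\mu(x)+p(x)u(t)^{\alpha-1}=u'(t)/u(t)$ at two times with distinct values of $u^{\alpha-1}$ and subtract, which immediately forces $p$ and then $\mu$ to be constant; the sign constraints from the decay hypotheses and the differential inequality $(u^{1-\alpha})'\geq(\alpha-1)|p_0|$ then give the blow-up contradiction on the ancient interval. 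Your argument is shorter and avoids the explicit general solution and its attendant case analysis; the paper's version, on the other hand, gives a slightly more detailed picture of what happens in each subcase. Both are valid and arrive at the same conclusion.
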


\begin{theorem}\label{app3}
Let $(M,g,e^{-f}dv)$ be an $n$-dimensional complete smooth metric measure space with $\mathrm{Ric}_f\geq 0$.
Assume that there exist two constants $s>0$ and $\kappa>0$,
such that $\mu(x)$ and $p(x)$ in the following equation
\begin{equation}\label{Yameqapp3}
\left(\Delta_f-\frac{\partial}{\partial t}\right)u+\mu(x)u+p(x)u^\alpha=0,
\quad \alpha<1,\,\,p(x)\not\equiv0,
\end{equation}
satisfy
\begin{enumerate}
\item $\mu^+\big|_{B(x_0,R)}=o(R^{-s})$ and $\sup_{B(x_0,R)}|\nabla\mu|=o(R^{-s})$,
\, as\, $R\to\infty$;
\item $\sup_{B(x_0,R)}|\,p\,|=o[R^{-\kappa(1-\alpha)}]$ and $\sup_{B(x_0,R)}|\nabla p|=o[R^{-\kappa(1-\alpha)}]$,
\, as\, $R\to\infty$.
\end{enumerate}
Let $u(x,t)$ be a positive ancient solution to \eqref{Yameqapp3} such that
\[
(r(x)+|t|)^{-\widetilde{\kappa}}\leq u(x,t)\leq(r(x)+|t|)^{\delta}
\]
for some $\widetilde{\kappa}\in(0,\kappa)$ and $\delta>0$ near infinity.
Then $u(x,t)\equiv c^{\frac{1}{\alpha-1}}$ and $\mu(x)\equiv-cp(x)$ for some
constant $c>0$.
\end{theorem}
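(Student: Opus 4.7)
The plan is to apply the elliptic gradient estimate of Theorem~\ref{main} (with $q\equiv 0$) to a positive ancient solution $u$ of \eqref{Yameqapp3}, show that every term on the right-hand side tends to zero as the domain exhausts $M\times(-\infty,0)$, and then use the resulting fact $\nabla u\equiv 0$ together with the hypothesis $p\not\equiv 0$ to pin down $u$.

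Fix a spacetime point $(\bar x,\bar t)$ with $\bar t<0$. Apply Theorem~\ref{main} with base point $x_0$ chosen so that $\bar x\in B(x_0,R/2)$, with $t_0=0$, and with the scaling $T=R$, taking $R$ large enough that the region covers $(\bar x,\bar t)$. Since $\mathrm{Ric}_f\geq 0$ we have $K=0$, so the bound reduces to the scale, time, weighted-Laplacian, $\mu$-, and $p$-terms in the bracket. The growth hypotheses give $D:=\sup_{Q_{R,T}}u\leq(2R)^\delta$ and $\inf_{Q_{R,T}}u\geq(2R)^{-\widetilde\kappa}$, hence $\sup u^{(\alpha-1)/2}\leq(2R)^{\widetilde\kappa(1-\alpha)/2}$ and analogously for the exponent $(\alpha-1)/3$. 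Using $\sup|p|=o(R^{-\kappa(1-\alpha)})$, $\sup|\nabla p|=o(R^{-\kappa(1-\alpha)})$ and the inequality $[(\alpha-1)p]^++p^+\leq (2-\alpha)|p|$ valid for $\alpha<1$, the two $p$-contributions become
\[
o\!\left(R^{-(\kappa-\widetilde\kappa)(1-\alpha)/2}\right)\quad\text{and}\quad o\!\left(R^{-(\kappa-\widetilde\kappa)(1-\alpha)/3}\right),
\]
both polynomially decaying since $\widetilde\kappa<\kappa$ and $\alpha<1$. The $\mu$-terms are $o(R^{-s/2})$ and $o(R^{-s/3})$, while $1/R$, $\sqrt{\sigma^+/R}$ and $1/\sqrt{\bar t+T}$ are $O(R^{-1/2})$. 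Meanwhile the prefactor $1+\ln(D/u(\bar x,\bar t))$ is $O(\ln R)$. Hence the upper bound for $|\nabla\ln u|(\bar x,\bar t)$ tends to $0$ as $R\to\infty$, and since $(\bar x,\bar t)$ was arbitrary, $u=u(t)$ depends only on time.

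Substituting $u=u(t)$ into \eqref{Yameqapp3} yields $-u'(t)+\mu(x)u(t)+p(x)u(t)^\alpha=0$ for all $(x,t)$. Differencing at two points $x_1,x_2\in M$ and dividing by $u(t)>0$ gives $(\mu(x_1)-\mu(x_2))+(p(x_1)-p(x_2))u(t)^{\alpha-1}=0$ for all admissible $t$. Either $u^{\alpha-1}$ is constant in $t$, so $u\equiv u_0$, or $\mu,p$ are constants in $x$; in the latter case the growth hypotheses force the constant $\mu\leq 0$ and $p\equiv 0$, contradicting $p\not\equiv 0$. Hence $u\equiv u_0$ is a positive constant, and substituting back produces $\mu(x)+u_0^{\alpha-1}p(x)=0$; setting $c:=u_0^{\alpha-1}>0$ gives $u\equiv c^{1/(\alpha-1)}$ and $\mu\equiv-cp$, as claimed.

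The main technical obstacle is matching the logarithmic prefactor $1+\ln(D/u)$ against polynomial decay in the bracket; this is handled automatically as long as at least one term in the bracket decays like a strictly negative power of $R$. The subtlety specific to $\alpha<1$ is that $\sup u^{(\alpha-1)/2}$ is governed by $\inf u$ rather than $\sup u$, which explains why the hypothesis requires a \emph{two-sided} polynomial envelope on $u$, and why the combination $[(\alpha-1)p]^++p^+$ reduces to a multiple of $|p|$ so that the decay hypothesis must be placed on $|p|$ rather than on $p^+$ alone.
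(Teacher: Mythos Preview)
Your argument is correct and the first half---applying Theorem~\ref{main} on $Q_{R,R}$ with $K=0$, balancing the $O(\ln R)$ prefactor against the polynomial decay in the bracket, and using the lower bound on $u$ to control $\sup u^{(\alpha-1)/2}$---matches the paper's proof essentially line for line. One small phrasing point: the base point $x_0$ should be the fixed point from the hypotheses (the one relative to which the growth conditions on $\mu$, $p$, and $u$ are stated), and then $\bar x\in B(x_0,R/2)$ holds automatically for $R$ large; your wording ``$x_0$ chosen so that'' could be read as letting $x_0$ vary, which would not be compatible with the decay assumptions.

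Where you diverge from the paper is in the endgame. Once $u=u(t)$, the paper splits into the cases $\mu\equiv 0$ and $\mu\not\equiv 0$, and in the second case integrates the Bernoulli ODE explicitly to obtain $u^{1-\alpha}(t)=Ce^{(1-\alpha)\mu(x)t}-p(x)/\mu(x)$, then argues from the $x$-independence of the left side that $C=0$. Your differencing argument---subtracting the equation at two spatial points and dividing by $u(t)$ to get $(\mu(x_1)-\mu(x_2))+(p(x_1)-p(x_2))u(t)^{\alpha-1}=0$---is more elementary and avoids both the case split and the implicit assumption that $\mu(x)\neq 0$ pointwise (needed to write down the ODE solution). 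It also makes transparent why the decay hypothesis on $|p|$, rather than on $p^+$, is what forces the contradiction when $p$ is constant. The paper's route gives slightly more information about what the ODE trajectories look like, but for the Liouville conclusion your approach is cleaner.
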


\medskip

We also apply Theorem \ref{main} to prove Liouville-type theorems for
elliptic versions of the equation \eqref{Yameq}; see for example Theorems
\ref{ellap1} and \ref{ellap2} in Section \ref{sec5}. In
particular, we apply Theorem \ref{app1} to study the problem about
conformal deformation of the scalar curvature on complete manifolds.
\begin{theorem}\label{app4}
Let $(M,g)$ be an $n$-dimensional $(n\geq 3)$ complete (possible noncompact) Riemannian manifold
with $\mathrm{Ric}\geq 0$ and $\sup_{B(x_0,R)}|\nabla\mathrm{S}|=o(R^{-s})$ for some
constant $s>0$, as $R\to\infty$. For any $\kappa>0$, there does not exist complete metric
\[
\tilde{g}\in\left\{u^{\frac{4}{n-2}}g\,\big|\,0<u\in C^\infty(M), u(x)=o(r^{\widetilde{\kappa}}(x))\right\}
\]
for some $\widetilde{\kappa}\in(0,\kappa)$, such that the scalar curvature $\mathrm{\tilde{S}}$ of $\tilde{g}$ satisfies
\[
\mathrm{\tilde{S}}^+\big|_{B(x_0,R)}=o(R^{-\frac{4\kappa}{n-2}})\quad\mathrm{and}
\quad\sup_{B(x_0,R)}|\nabla\mathrm{\tilde{S}}|=o(R^{-\frac{4\kappa}{n-2}}),\quad\mathrm{as}\,\,R\to\infty.
\]
\end{theorem}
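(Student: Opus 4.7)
The plan is to interpret equation \eqref{maineq} as the time-independent case of \eqref{Yameqapp1} and then invoke Theorem \ref{app1}. Specifically, assuming for contradiction that a complete metric $\tilde{g}=u^{4/(n-2)}g$ exists within the stated class, I would set
\[
\mu(x):=-\frac{n-2}{4(n-1)}\mathrm{S}(x),\quad p(x):=\frac{n-2}{4(n-1)}\mathrm{\tilde{S}}(x),\quad \alpha:=\frac{n+2}{n-2}>1,
\]
together with $f\equiv 0$, so that \eqref{maineq} reads $\Delta u+\mu u+pu^\alpha=0$. Regarding $u(x,t):=u(x)$ as time-independent, $u$ becomes a positive ancient solution of \eqref{Yameqapp1} on the smooth metric measure space $(M,g,dv_g)$.

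Next I would verify each hypothesis of Theorem \ref{app1} in turn. The curvature condition $\mathrm{Ric}_f\geq 0$ is immediate since $f\equiv 0$. Because $\mathrm{Ric}\geq 0$ implies $\mathrm{S}\geq 0$, we get $\mu\leq 0$, so $\mu^+\equiv 0$ satisfies the $o(R^{-s})$ bound trivially, while $\sup_{B(x_0,R)}|\nabla\mu|=\frac{n-2}{4(n-1)}\sup_{B(x_0,R)}|\nabla\mathrm{S}|=o(R^{-s})$ is exactly the hypothesis on $\mathrm{S}$. Using $\alpha-1=\frac{4}{n-2}$, the decay conditions on $\mathrm{\tilde{S}}^+$ and $|\nabla\mathrm{\tilde{S}}|$ translate directly to $p^+|_{B(x_0,R)}=o(R^{-\kappa(\alpha-1)})$ and $\sup_{B(x_0,R)}|\nabla p|=o(R^{-\kappa(\alpha-1)})$. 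Finally, since $u$ is independent of $t$ and $r(x)\leq r(x)+|t|$, the assumed growth $u(x)=o(r(x)^{\widetilde{\kappa}})$ yields $u(x,t)=o((r(x)+|t|)^{\widetilde{\kappa}})$ near infinity, matching the hypothesis on the solution in Theorem \ref{app1}.

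Provided $\mathrm{\tilde{S}}\not\equiv 0$, so that $p\not\equiv 0$, Theorem \ref{app1} then forces $u\equiv c^{(n-2)/4}$ for some $c>0$ together with $\mu\equiv -cp$, equivalently $\mathrm{S}\equiv c\,\mathrm{\tilde{S}}$. Since $u$ is a positive constant, $\tilde{g}=c\,g$ is merely a uniform rescaling of $g$, so no genuine conformal deformation within the stated class arises. The borderline case $\mathrm{\tilde{S}}\equiv 0$ I would handle separately: then \eqref{maineq} becomes $\Delta u=\frac{n-2}{4(n-1)}\mathrm{S}\,u$, and the $p\equiv q\equiv 0$ specialization of Theorem \ref{main}, coupled with $\mathrm{Ric}\geq 0$ and the sublinear growth of $u$, forces $|\nabla\ln u|\equiv 0$; this gives $u$ constant, whence $\mathrm{S}\,u\equiv 0$ and so $\mathrm{S}\equiv 0$, another trivial situation. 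The main obstacle is precisely this borderline case, where Theorem \ref{app1} does not directly apply and a separate Liouville argument via Theorem \ref{main} is required; a secondary point is to notice that only $|\nabla\mathrm{S}|$, and not $\mathrm{S}$ itself, needs to decay here, because $\mathrm{Ric}\geq 0$ already makes $\mu^+\equiv 0$.
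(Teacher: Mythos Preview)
Your proposal is correct and follows essentially the same approach as the paper: specialize to $f\equiv 0$, $\alpha=\frac{n+2}{n-2}$, $\mu=-\frac{n-2}{4(n-1)}\mathrm{S}$, $p=\frac{n-2}{4(n-1)}\mathrm{\tilde{S}}$, note that $\mathrm{Ric}\geq 0$ forces $\mathrm{S}\geq 0$ so $\mu^+\equiv 0$, and invoke Theorem~\ref{app1}. You are in fact more careful than the paper's very brief proof, explicitly verifying each hypothesis and treating the borderline case $\mathrm{\tilde{S}}\equiv 0$ (where Theorem~\ref{app1} does not apply because it requires $p\not\equiv 0$) via Theorem~\ref{main} directly; the paper does not isolate this case. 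One small refinement: in the main case $\mathrm{\tilde{S}}\not\equiv 0$, rather than stopping at ``no genuine conformal deformation'', you can extract an outright contradiction, since the conclusion $\mathrm{S}\equiv c\,\mathrm{\tilde{S}}$ with $c>0$ and $\mathrm{S}\geq 0$ forces $\mathrm{\tilde{S}}\geq 0$, so $\mathrm{\tilde{S}}^+=\mathrm{\tilde{S}}\not\equiv 0$ cannot satisfy $\mathrm{\tilde{S}}^+|_{B(x_0,R)}=o(R^{-4\kappa/(n-2)})$.
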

\begin{remark}
If $\mathrm{\tilde{S}}$ is nonpositive constant, the growth conditions of $\mathrm{\tilde{S}}$
and $\nabla\mathrm{\tilde{S}}$ in Theorem \ref{app4} naturally hold and hence $u(x)$ can
be relaxed to $u(x)=e^{o(r^{\frac s3}(x))}$. Compared with the work of \cite{[MRS],[RRV]},
Theorem \ref{app4} is valid without any assumptions on sectional curvature, eigenvalue
of the conformal operator $\Delta-\frac{n-2}{4(n-1)}\mathrm{S}$, only assuming some
conditions of the Ricci tensor and the growth of $u(x)$.
\end{remark}

On a compact smooth metric measure space, Case \cite{[Case]} provided an example
which shows that minimizers of the weighted Yamabe constant do not always exist.
Using Theorem \ref{main} we can prove

\begin{theorem}\label{generalcase}
Let $(M,g,e^{-f}dv)$ be an $n$-dimensional $(n\geq 3)$ complete smooth metric measure
space with $\mathrm{Ric}_f\geq 0$. For any $m>0$, assume that there exist two
constants $s>0$ and $\kappa>0$ such that
\begin{enumerate}
\item $(\mathrm{S}^m_f)^-\big|_{B(x_0,R)}=o(R^{-s})$\, and\, $\sup_{B(x_0,R)}|\nabla\mathrm{S}^m_f|=o(R^{-s})$,
\, as\, $R\to\infty$;
\item $e^{\frac fm}|_{B(x_0,R)}=o[R^{\frac{-2\kappa}{m+n-2}}]$ and $\sup_{B(x_0,R)}|\nabla e^{\frac fm}|=o[R^{\frac{-2\kappa}{m+n-2}}]$,
\, as\, $R\to\infty$.
\end{enumerate}
Then there does not exist a minimizer of the weighted Yamabe constant $\Lambda\leq0$ with $u(x)=o\,(r^{\widetilde{\kappa}}(x)\,)$ for some $\widetilde{\kappa}\in(0,\kappa)$ near infinity.
\end{theorem}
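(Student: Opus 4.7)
The plan is to apply Theorem \ref{main} to the Euler--Lagrange equation of the weighted Yamabe quotient, show that any minimizer in the given growth class must reduce to a positive constant, and then derive a contradiction from the resulting algebraic constraint together with the decay hypotheses.

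I argue by contradiction: suppose $u$ is a positive smooth minimizer with $\Lambda\le 0$ and $u(x)=o(r^{\widetilde\kappa}(x))$. Then $u$ satisfies \eqref{weYaeq}. I view $u$ as a time-independent positive solution of \eqref{Yameq} with
\[
\mu=-\tfrac{m+n-2}{4(m+n-1)}\mathrm{S}^m_f,\quad p=-c_1\, e^{f/m},\quad q=c_2,\quad \alpha=\tfrac{m+n}{m+n-2},\quad \beta=\tfrac{m+n+2}{m+n-2}.
\]
Since $c_1$ and $c_2$ are both proportional to $\Lambda\le 0$, they are nonpositive, so $p\ge 0$ and $q\le 0$ is a constant. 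Consequently $q^+$, $|\nabla q|$, and $[(\beta-1)q]^+$ all vanish, killing the $q$-contribution to the bracket in Theorem \ref{main}.

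Next, fix an arbitrary $x\in M$ and apply Theorem \ref{main} on $Q_{R,T}=B(x_0,R)\times[t_0-T,t_0]$, sending $T\to\infty$ to eliminate the $(t-t_0+T)^{-1/2}$ term. The growth hypothesis lets me take $D=\sup_{B(x_0,R)}u=o(R^{\widetilde\kappa})$, so $1+\ln(D/u(x))=O(\ln R)$ at the fixed point $x$. Hypothesis (1) gives $\sqrt{\mu^+}=o(R^{-s/2})$ and $\sup|\nabla\mu|^{1/3}=o(R^{-s/3})$; hypothesis (2) combined with $\alpha-1=2/(m+n-2)$ and $\widetilde\kappa<\kappa$ shows that both $p$-terms decay at least like $o(R^{-c(\kappa-\widetilde\kappa)})$ for some $c>0$; the remaining $1/R$, $\sqrt{\sigma^+/R}$, and $\sqrt{K}=0$ pieces are $O(R^{-1/2})$. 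Multiplying by the $O(\ln R)$ factor still yields $0$ in the limit $R\to\infty$, so $|\nabla u|(x)=0$. Since $x$ was arbitrary, $u\equiv c$ for some positive constant $c$.

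To finish, substitute $u\equiv c$ into \eqref{weYaeq} and divide by $c$:
\[
\mathrm{S}^m_f=\tfrac{4(m+n-1)}{m+n-2}\bigl[-c_1\, c^{2/(m+n-2)}e^{f/m}+c_2\, c^{4/(m+n-2)}\bigr].
\]
If $\Lambda<0$, then $c_1,c_2<0$; since $e^{f/m}\to 0$ at infinity by hypothesis (2), the right-hand side tends to the negative constant $\tfrac{4(m+n-1)}{m+n-2}c_2\, c^{4/(m+n-2)}<0$, contradicting $(\mathrm{S}^m_f)^-=o(R^{-s})$. If $\Lambda=0$, then $c_1=c_2=0$ and $\mathrm{S}^m_f\equiv 0$, and one checks that a positive constant cannot serve as a minimizer of $\mathcal{Q}$ in the implicit noncompact setting, as the weighted integrals defining $\mathcal{Q}(c)$ cannot all be simultaneously finite under hypothesis (2). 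The main obstacle I expect is exactly this last step: translating ``$u$ is constant'' into genuine nonexistence in the borderline case $\Lambda=0$, where one must exploit noncompactness (forced by the decay hypotheses) to rule out constants from the admissible class of $\mathcal{Q}$.
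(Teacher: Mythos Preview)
Your proposal is correct and follows essentially the same route as the paper. The paper packages the gradient-estimate step through Theorem~\ref{ellap2} (the elliptic Liouville theorem for \eqref{pro5.3} with $\alpha,\beta>1$), whereas you invoke Theorem~\ref{main} directly; these are the same mechanism, since Theorem~\ref{ellap2} is itself obtained from Theorem~\ref{main} by the argument of Theorem~\ref{app1}. Both proofs identify $\mu=-\tfrac{m+n-2}{4(m+n-1)}\mathrm{S}^m_f$, $p=-c_1 e^{f/m}$, $q=c_2$, check the decay conditions (your observation that $q^+$, $[(\beta-1)q]^+$, and $|\nabla q|$ vanish is exactly what makes condition~(3) of Theorem~\ref{ellap2} automatic), and conclude that $u$ must be constant. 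Your explicit case analysis ruling out constant minimizers---for $\Lambda<0$ via the forced negative limit of $\mathrm{S}^m_f$, and for $\Lambda=0$ via divergence of the defining integrals on the necessarily noncompact $M$---actually spells out a step that the paper leaves implicit: after invoking Theorem~\ref{ellap2}, the paper simply asserts ``such $u(x)$ does not exist'' without separating the two cases.
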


When $\Lambda=0$, we have a simple statement.
\begin{theorem}\label{app5}
Let $(M,g,e^{-f}dv)$ be an $n$-dimensional $(n\geq 3)$ complete smooth
metric measure space with $\mathrm{Ric}_f\geq 0$. For any $m>0$, assume that
\[
(\mathrm{S}^m_f)^-\big|_{B(x_0,R)}=o(R^{-1})\quad\mathrm{and}\quad
\sup_{B(x_0,R)}|\nabla\mathrm{S}^m_f|=o(R^{-\frac 32}),\quad\mathrm{as}\,\,R\to\infty.
\]
If the weighted Yamabe constant $\Lambda=0$, there does not exist a critical point of
the weighted Yamabe quotient $\mathcal{Q}(u)$ with $u(x)=e^{o(r^{1/2}(x))}$ near infinity.
\end{theorem}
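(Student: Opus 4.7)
The plan is to reduce the critical-point equation to a time-independent instance of \eqref{Yameq} and then apply Theorem~\ref{main} with $R\to\infty$. When $\Lambda[g,e^{-f}dv_g]=0$ one has $c_1=c_2=0$ in \eqref{weYaeq}, so any critical point $u$ of $\mathcal{Q}$ satisfies
\[
\Delta_f u-\frac{m+n-2}{4(m+n-1)}\mathrm{S}^m_f\,u=0.
\]
Regarding $u$ as time-independent, this is precisely \eqref{Yameq} with $p\equiv q\equiv 0$ and $\mu(x)=-\tfrac{m+n-2}{4(m+n-1)}\mathrm{S}^m_f(x)$; note that $\mu^+$ and $|\nabla\mu|$ are proportional to $(\mathrm{S}^m_f)^-$ and $|\nabla\mathrm{S}^m_f|$ respectively.

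Next I would apply Theorem~\ref{main} on $Q_{R,T}=B(x_0,R)\times[t_0-T,t_0]$ with $K=0$ (allowed since $\mathrm{Ric}_f\geq 0$), and let $T\to\infty$ to kill the $1/\sqrt{t-t_0+T}$ term. Writing $D_R:=\sup_{B(x_0,R)}u$, this yields, for every $x\in B(x_0,R/2)$,
\[
|\nabla\ln u|(x)\leq c\bigl(1+\ln\tfrac{D_R}{u(x)}\bigr)\Bigl[\tfrac{1}{R}+\sqrt{\tfrac{\sigma^+}{R}}+\sqrt{\mu^+}+\sup_{B(x_0,R)}|\nabla\mu|^{1/3}\Bigr],
\]
where $\sigma^+$ is a fixed geometric constant independent of $R$. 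The hypothesis $(\mathrm{S}^m_f)^-\big|_{B(x_0,R)}=o(R^{-1})$ gives $\sqrt{\mu^+}=o(R^{-1/2})$, and $\sup_{B(x_0,R)}|\nabla\mathrm{S}^m_f|=o(R^{-3/2})$ gives $\sup|\nabla\mu|^{1/3}=o(R^{-1/2})$. Together with $R^{-1}$ and $\sqrt{\sigma^+/R}$, the whole bracket is $o(R^{-1/2})$ as $R\to\infty$.

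The growth condition $u(x)=e^{o(r^{1/2}(x))}$ translates to $\ln u=o(r^{1/2})$, so $\ln D_R=o(R^{1/2})$ and, for any fixed $x$, $1+\ln(D_R/u(x))=o(R^{1/2})$. Multiplying the two factors one finds $|\nabla\ln u|(x)\leq o(R^{1/2})\cdot o(R^{-1/2})\to 0$, and since $x$ was arbitrary, $u$ is constant. Substituting $u\equiv\mathrm{const}>0$ back into the Euler--Lagrange equation forces $\mathrm{S}^m_f\equiv 0$; in this degenerate situation every positive constant is formally critical but the equation carries no information, so no genuine critical point in the sense of the theorem exists.

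The main obstacle, beyond bookkeeping the exponents, is the tight matching of rates: the $e^{o(r^{1/2})}$ growth of $u$ produces an $o(R^{1/2})$ prefactor, so the bracket in Theorem~\ref{main} must decay like $o(R^{-1/2})$. This dictates the $o(R^{-1})$ rate on $(\mathrm{S}^m_f)^-$ (to survive the square root) and the asymmetric $o(R^{-3/2})$ rate on $|\nabla\mathrm{S}^m_f|$ (to survive the cube root). Verifying that $\sigma^+$ is a constant depending only on the geometry near $x_0$ (and in particular not on $R$) is also essential for the $\sqrt{\sigma^+/R}$ term to vanish in the limit.
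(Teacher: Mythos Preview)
Your approach is essentially the same as the paper's. The paper reduces to the equation $\Delta_f u-\tfrac{m+n-2}{4(m+n-1)}\mathrm{S}^m_f\,u=0$ exactly as you do and then invokes Theorem~\ref{app2} (the $\alpha=1$ parabolic Liouville theorem), whose proof is precisely the gradient-estimate computation you carry out by hand: apply Theorem~\ref{main} on $Q_{R,R}$, use $\sqrt{\mu^+}=o(R^{-1/2})$ and $|\nabla\mu|^{1/3}=o(R^{-1/2})$ against $\ln D_R=o(R^{1/2})$, and conclude $|\nabla u|\equiv 0$. So you have simply inlined the proof of Theorem~\ref{app2} rather than citing it.

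One remark: your final sentence about the ``degenerate situation'' where $\mathrm{S}^m_f\equiv 0$ is not a real argument---saying ``every positive constant is formally critical but the equation carries no information'' does not establish nonexistence. The paper's proof is equally silent on this edge case (it just cites Theorem~\ref{app2}, whose case~(2) explicitly allows constant solutions when $\mu\equiv 0$). The honest resolution is that on a complete noncompact space the integrals defining $\mathcal{Q}$ diverge for constant $u$, so constants are not admissible competitors; alternatively one tacitly assumes $\mathrm{S}^m_f\not\equiv 0$. You should state whichever of these you intend rather than the vague phrase you have now.
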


\medskip

On the other hand, we can apply Theorem \ref{mainpara} to give a new Liouville
theorem for an elliptic case of the equation \eqref{Yameq}, which is a supplement
to Yang's result \cite{[Yang]}.
\begin{theorem}\label{ellliou}
Let $(M,g,e^{-f}dv)$ be an $n$-dimensional complete smooth metric measure space
with $\mathrm{Ric}^m_f\geq0$. Then there does not exist any nontrivial positive
solution $u(x)$ to the elliptic equation
\begin{equation}\label{elli}
\Delta_f u+p u^\alpha=0, \quad \alpha\leq1,
\end{equation}
where $p$ is a nonnegative constant.
\end{theorem}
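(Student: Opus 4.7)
The plan is to apply the parabolic gradient estimate of Theorem~\ref{mainpara} to the time-independent solution $u(x)$. Viewing $u$ as a smooth solution on $M\times[0,T]$ with $u_t\equiv 0$, equation \eqref{elli} becomes a special case of \eqref{Yameq} with $\mu\equiv q\equiv 0$ and $p$ equal to the given nonnegative constant; hence Theorem~\ref{mainpara} applies on every cylinder $H_{2R,T}$.

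Next I would specialize all quantities appearing in Theorem~\ref{mainpara}. Since $\mathrm{Ric}^m_f\ge 0$ we may take $K=0$, and since $\mu,q$ vanish and $p$ is constant we may take $a_i=b_i=0$ for $i=1,2,3$. The hypothesis $\alpha\le 1$ is exactly what lets me pick $\lambda>1$ with $\lambda\alpha\le 1$: any $\lambda>1$ works when $\alpha\le 0$ or $\alpha=1$, while $1<\lambda<1/\alpha$ works when $0<\alpha<1$. With this choice $(\alpha-1)(\lambda\alpha-1)\ge 0$, so $[(\alpha-1)(\lambda\alpha-1)p]^-=0$ and hence $\tilde{K}=0$; moreover $\gamma=0$, so $\Psi=0$ (and the required sign condition $\Psi\ge 0$ is satisfied trivially). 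Since $p\ge 0$ and $\alpha-1\le 0$, we also have $[(\alpha-1)p]^+=0$, killing the last line of the estimate.

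What remains of Theorem~\ref{mainpara} is an inequality of the form
\[
\frac{|\nabla u|^2}{\lambda u^2}+p\,u^{\alpha-1}\;\le\;\frac{(m+n)\lambda}{2t}+\frac{C(m,n,\lambda)}{R^2}
\qquad\text{on }B(x_0,R)\times(0,T].
\]
Fixing $x\in M$ and letting $t=T\to\infty$ and $R\to\infty$ drives the right-hand side to zero. As both summands on the left are nonnegative, each must vanish pointwise: $|\nabla u|\equiv 0$ and $p\,u^{\alpha-1}\equiv 0$. Positivity of $u$ then forces $p=0$; once $p=0$, the vanishing gradient shows $u$ is constant, yielding only the trivial positive solution of $\Delta_f u=0$. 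This rules out any nontrivial positive solution of \eqref{elli}.

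The main obstacle is essentially bookkeeping: pinning down a single choice of $\lambda>1$ that simultaneously renders $\tilde{K}$, $\gamma$, $\Psi$, and the $[(\alpha-1)p]^+$ term harmless over the full range $\alpha\le 1$. Once that algebraic alignment is in place, the conclusion follows by letting $R,T\to\infty$ in the resulting degenerate Li-Yau-type inequality; no auxiliary geometric input beyond $\mathrm{Ric}^m_f\ge 0$ is needed.
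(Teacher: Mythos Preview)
Your proposal is correct and follows essentially the same route as the paper: apply Theorem~\ref{mainpara} to the time-independent solution, use $K=0$ and $a_i=b_i=0$, choose $\lambda>1$ with $\lambda\alpha\le 1$ so that $\tilde K=\gamma=\Psi=0$ and $[(\alpha-1)p]^+=0$, and then let $R,t\to\infty$. You are in fact slightly more explicit than the paper about sending $t\to\infty$ to kill the $\frac{(m+n)\lambda}{2t}$ term and about the dichotomy $p=0$ versus $p>0$ at the end; both are implicit in the paper's argument.
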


When $\Lambda=0$, Theorem \ref{ellliou} indeed implies that
\begin{coro}\label{app6}
Let $(M,g,e^{-f}dv)$ be an $n$-dimensional $(n\geq 3)$ complete smooth metric measure
space with $\mathrm{Ric}^m_f\geq 0$. Assume that the weighted scalar curvature
$\mathrm{S}^m_f$ is nonpositive constant. If the weighted Yamabe constant $\Lambda=0$,
there does not exist a critical point of the weighted Yamabe quotient $\mathcal{Q}$.
\end{coro}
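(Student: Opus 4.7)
The plan is to show that the critical point equation for $\mathcal{Q}$ reduces to a special instance of \eqref{elli} to which Theorem \ref{ellliou} applies. First I would recall the Euler--Lagrange equation \eqref{weYaeq} for $\mathcal{Q}$, and observe that when $\Lambda[g,e^{-f}dv_g]=0$ the coefficients satisfy $c_1=c_2=0$ (as noted immediately after the definition of the weighted Yamabe constant). Consequently, any critical point of $\mathcal{Q}$ is a positive smooth function $u$ solving
\[
\Delta_f u - \frac{m+n-2}{4(m+n-1)}\,\mathrm{S}^m_f\, u = 0.
\]

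Next, since $\mathrm{S}^m_f$ is assumed to be a nonpositive constant, I would set
\[
p := -\frac{m+n-2}{4(m+n-1)}\,\mathrm{S}^m_f,
\]
which is a nonnegative constant. The critical point equation then becomes
\[
\Delta_f u + p u = 0,
\]
that is, the equation \eqref{elli} with exponent $\alpha=1$ and a nonnegative constant coefficient $p$. By hypothesis $\mathrm{Ric}^m_f\geq 0$, so the conditions of Theorem \ref{ellliou} are fulfilled. Applying Theorem \ref{ellliou} produces a contradiction with the positivity of $u$, proving that no critical point of $\mathcal{Q}$ can exist.

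Since all the work is carried out in Theorem \ref{ellliou}, the proof is essentially a translation; there is no genuine obstacle. The only point requiring a small verification is that the sign convention is consistent: one has to check that the coefficient in front of $u$ in the reduced equation is indeed nonnegative under the assumption $\mathrm{S}^m_f\leq 0$, which is immediate from the positivity of $\frac{m+n-2}{4(m+n-1)}$ for $m>0$ and $n\geq 3$.
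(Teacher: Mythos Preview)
Your proposal is correct and follows essentially the same route as the paper's own proof: reduce the critical-point equation \eqref{weYaeq} via $\Lambda=0\Rightarrow c_1=c_2=0$ to $\Delta_f u + p u=0$ with $p=-\tfrac{m+n-2}{4(m+n-1)}\mathrm{S}^m_f\ge 0$ a constant, and then invoke Theorem~\ref{ellliou} with $\alpha=1$. The only cosmetic difference is that the paper phrases the conclusion as ``no nontrivial positive solution'' rather than ``contradiction with positivity,'' but the argument is the same.
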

\begin{remark}
In view of Theorem \ref{generalcase}, we may apply Theorem \ref{mainpara} to study the minimizer
of the weighted Yamabe constant $\Lambda\le0$ (or $\Lambda\ge0$). This enables us to determine many
complicated assumptions so that we can apply the Li-Yau gradient estimate of Theorem \ref{mainpara}
to achieve the Liouville-type theorem for the equation \eqref{weYaeq}. In the paper we do not
describe this complicated case.
\end{remark}

\medskip

Inequalities in Theorem \ref{main} and Theorem \ref{mainpara} are called local elliptic and parabolic
gradient estimates, respectively (sometimes called Hamilton-Souplet-Zhang and Li-Yau gradient
estimates, respectively), which are both proved by using the maximum principle in a locally
supported set of the manifold. Similar inequalities have been obtained for the linear heat
equation, e.g. \cite{[ChHam],[LXu],[Li-Yau],[LD],[Sou-Zh],[Wu15]} and some nonlinear equations,
e.g. \cite{[CaZh],[DKN],[JLi],[Ya08],[Zhu1],[Zhu2]}. However, our case is more complicated
due to the function coefficients of equation \eqref{Yameq}. To the best of our knowledge,
the gradient estimate technique is originated by Yau \cite{[Yau]} (see also Cheng-Yau
\cite{[Cheng-Yau]}) in 1970s, who first proved a gradient estimate for the harmonic
function on the manifold. In 1980s, this technique was developed by Li-Yau \cite{[Li-Yau]}
for the heat equation on manifolds (though a precursory form of their estimate appeared
in \cite{[AB]}). In 1990s, R. Hamilton \cite{[Ham93]} gave an elliptic gradient estimate
for the heat equation. But this estimate is global which requires the equation defined
on closed manifolds. In 2006, Souplet and Zhang \cite{[Sou-Zh]} proved a local elliptic
form by adding a logarithmic correction term. Recently, many authors extended the Li-Yau
and Hamilton-Souplet-Zhang gradient estimates to the other heat-type equations; see for
example \cite{[Ba],[CCK],[CFL],[CLPW],[DKN],[Wu15],[Wu17],[Xu],[Zhu1],[Zhu2]} and
references therein.

\medskip

The paper is organized as follows. In Section \ref{sec2}, we first
give a useful lemma. Then we apply the lemma and the maximum principle to prove
Theorem \ref{main}. In Section \ref{sec3}, we start to give a lemma, and
then we apply the lemma to prove Theorem \ref{mainpara}. In Section \ref{sec4},
we apply Theorem \ref{main} to discuss Liouville-type theorems for some parabolic
cases of the equation \eqref{Yameq}, especially for Theorems \ref{app1} and \ref{app3}.
In Section \ref{sec5}, we apply Theorems \ref{main} and \ref{mainpara} to study
Liouville-type theorems for various elliptic versions of the equation \eqref{Yameq};
see for example Theorems \ref{ellliou}, \ref{ellap1} and \ref{ellap2}. In particular,
using these results, we study some Yamabe-type problems of complete manifolds and
smooth metric measure spaces; see Theorems \ref{app4}, \ref{generalcase}, \ref{app5}
and Corollary \ref{app6}.

\vspace{.1in}

\textbf{Acknowledgement}
The author thanks Professor Jeffrey S. Case for helpful discussions.
The author also thanks the referee for making valuable comments and suggestions
and pointing out many errors which helped to improve the exposition of the paper.
This work is supported by the NSFC (11671141) and the Natural Science Foundation
of Shanghai (17ZR1412800).


\section{Elliptic gradient estimate}\label{sec2}
In this section, we first prove a lemma, which is a generalization
of \cite{[Sou-Zh],[Wu15]}. Then we apply this lemma and the maximum
principle to prove Theorem \ref{main}.

Let $(M,g,e^{-f}dv)$ be an $n$-dimensional complete smooth metric measure space.
For any point $x_0\in M$ and $R>0$, assume that $0<u(x,t)\leq D$ for some
constant $D$ is a smooth solution to the equation \eqref{Yameq} in $Q_{R,T}$,
where
\[
Q_{R,T}:\equiv B(x_0,R)\times[t_0-T,t_0]\subset M\times(-\infty,\infty),\quad
t_0\in \mathbb{R},\,\,T>0.
\]
Introduce a auxiliary function
\[
h(x,t):=\ln\frac{u}{D}
\]
in $Q_{R,T}$. Then $h\leq 0$ and $h$ satisfies
\begin{equation}\label{lemequ}
\left(\Delta_f-\frac{\partial}{\partial t}\right)h+|\nabla h|^2+p(x,t)(De^h)^{\alpha-1}+q(x,t)(De^h)^{\beta-1}+\mu(x,t)=0.
\end{equation}
Using \eqref{lemequ} we have the following lemma, which will play an
significant part in the proof of Theorem \ref{main}.

\begin{lemma}\label{Lem2.1}
Let $(M,g,e^{-f}dv)$ be a complete smooth metric measure space. Assume that
$\mathrm{Ric}_f\geq-(n-1)K$ for some constant $K\geq0$ in $B(x_0,R)$, where
$x_0\in M$ and $R>0$. Let $h(x,t)$ is a nonpositive smooth function defined
in $Q_{R,T}$ satisfying \eqref{lemequ}. Then the function
\begin{equation}\label{lemmw2}
\omega:=\left|\nabla\ln(1-h)\right|^2=\frac{|\nabla h|^2}{(1-h)^2}
\end{equation}
satisfies
\begin{equation*}
\begin{aligned}
\frac 12\left(\Delta_f-\frac{\partial}{\partial t}\right)\omega
&\geq\frac{h}{1-h}\left\langle \nabla
h,\nabla\omega\right\rangle+(1-h)\omega^2-(n-1)K\omega\\
&\quad-\left(\alpha-1+\frac{1}{1-h}\right)p(De^h)^{\alpha-1}\omega
-\frac{(De^h)^{\alpha{-}1}}{(1-h)^2}\langle\nabla p,\nabla h\rangle\\
&\quad-\left(\beta-1+\frac{1}{1-h}\right)q(De^h)^{\beta-1}\omega
-\frac{(De^h)^{\beta-1}}{(1-h)^2}\langle\nabla q,\nabla h\rangle\\
&\quad-\frac{\mu}{1-h}\omega-\frac{1}{(1-h)^2}\langle\nabla \mu,\nabla h\rangle
\end{aligned}
\end{equation*}
for all $(x,t)$ in $Q_{R,T}$.
\end{lemma}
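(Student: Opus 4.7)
The plan is to perform a Bochner computation on $\omega=|\nabla h|^2/(1-h)^2$, substitute \eqref{lemequ} to eliminate $(\Delta_f-\partial_t)h$ and its gradient, and close by invoking a Kato-type inequality on $|\nabla^2 h|^2$ to obtain the sharp coefficients in front of $\langle\nabla h,\nabla\omega\rangle$ and $\omega^2$. Throughout I write $v:=1-h\ge 1$ for brevity, so that $\omega=|\nabla h|^2/v^2$, $\nabla v=-\nabla h$ and $\Delta_f v=-\Delta_f h$.

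First I would invoke the parabolic weighted Bochner formula
\[
\tfrac12(\Delta_f-\partial_t)|\nabla h|^2=|\nabla^2 h|^2+\langle\nabla h,\nabla(\Delta_f-\partial_t)h\rangle+\mathrm{Ric}_f(\nabla h,\nabla h),
\]
and separately expand $\Delta_f(\omega v^2)$ and $\partial_t(\omega v^2)$ via the weighted product rule $\Delta_f(AB)=A\Delta_f B+B\Delta_f A+2\langle\nabla A,\nabla B\rangle$. Combining the two expansions yields the identity
\[
\tfrac{v^2}{2}(\Delta_f-\partial_t)\omega=\tfrac12(\Delta_f-\partial_t)|\nabla h|^2+v\omega(\Delta_f-\partial_t)h-v^2\omega^2+2v\langle\nabla\omega,\nabla h\rangle.
\]
I then substitute \eqref{lemequ} into both occurrences of $(\Delta_f-\partial_t)h$. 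For the gradient substitution, the Leibniz rule gives $\nabla[p(De^h)^{\alpha-1}]=(De^h)^{\alpha-1}\bigl[\nabla p+(\alpha-1)p\nabla h\bigr]$ (and analogously for $q$), so pairing with $\nabla h$ simultaneously produces exactly the $\langle\nabla p,\nabla h\rangle$, $\langle\nabla q,\nabla h\rangle$ and $\langle\nabla\mu,\nabla h\rangle$ cross-terms appearing in the conclusion together with auxiliary $p(De^h)^{\alpha-1}\omega$, $q(De^h)^{\beta-1}\omega$ and $\mu\omega$ contributions. Using also $\nabla|\nabla h|^2=v^2\nabla\omega-2v\omega\nabla h$ to rewrite $\langle\nabla h,\nabla|\nabla h|^2\rangle$, the identity reduces to the schematic form
\[
\tfrac{v^2}{2}(\Delta_f-\partial_t)\omega=|\nabla^2 h|^2+v(2-v)\langle\nabla\omega,\nabla h\rangle+v^2(v-1)\omega^2+\mathrm{Ric}_f(\nabla h,\nabla h)+\mathcal N,
\]
with $\mathcal N$ lumping together all the nonlinear $p,q,\mu$-contributions.

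The coefficients of $\langle\nabla\omega,\nabla h\rangle$ and $\omega^2$ now have to be corrected, and for this I would invoke the refined Kato inequality $|\nabla^2 h|^2\ge|\nabla|\nabla h|^2|^2/(4|\nabla h|^2)$, which, after expanding $\nabla|\nabla h|^2=v^2\nabla\omega-2v\omega\nabla h$, takes the form
\[
|\nabla^2 h|^2\ge\frac{v^2|\nabla\omega|^2}{4\omega}-v\langle\nabla\omega,\nabla h\rangle+v^2\omega^2.
\]
Dropping the nonnegative first term and using $\mathrm{Ric}_f(\nabla h,\nabla h)\ge-(n-1)Kv^2\omega$ from the hypothesis, substitution shifts the $\langle\nabla\omega,\nabla h\rangle$-coefficient from $v(2-v)$ to $v(1-v)=vh$ and the $\omega^2$-coefficient from $v^2(v-1)$ to $v^3=(1-h)^3$. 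Dividing through by $v^2$ therefore produces the $\frac{h}{1-h}\langle\nabla h,\nabla\omega\rangle$, $(1-h)\omega^2$ and $-(n-1)K\omega$ terms of the lemma. For the remaining $\mathcal N/v^2$ piece, the combination of $-v\omega\cdot p(De^h)^{\alpha-1}$ coming from $v\omega(\Delta_f-\partial_t)h$ with $-v^2(\alpha-1)p(De^h)^{\alpha-1}\omega$ coming from $\langle\nabla h,\nabla(\Delta_f-\partial_t)h\rangle$ combines, after division by $v^2$, to $-\bigl(\alpha-1+\frac1{1-h}\bigr)p(De^h)^{\alpha-1}\omega$; the $q$- and $\mu$-analogues are identical, and the $(1-h)^{-2}$ normalisation in front of $\langle\nabla p,\nabla h\rangle$, $\langle\nabla q,\nabla h\rangle$ and $\langle\nabla\mu,\nabla h\rangle$ appears automatically from the same division.

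The main obstacle is algebraic bookkeeping rather than a conceptual point. The Kato refinement is indispensable: without the $-v\langle\nabla\omega,\nabla h\rangle$ and $+v^2\omega^2$ corrections it supplies, one is left with coefficient $(1+h)/(1-h)$ for $\langle\nabla h,\nabla\omega\rangle$ and $-h$ for $\omega^2$, which are both insufficient for the Souplet--Zhang-type cutoff argument used in the proof of Theorem \ref{main}. Once this refinement is in place, the rest is a careful but routine tracking of the $v$-polynomial coefficients spawned by the three nonlinear terms in \eqref{Yameq}.
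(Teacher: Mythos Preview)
Your argument is correct and is essentially the same computation as the paper's, only packaged differently: the paper expands $\Delta_f\omega$ and $\partial_t\omega$ directly in local coordinates and then drops the perfect square $\sum_{ij}\bigl(h_{ij}+h_ih_j/(1-h)\bigr)^2\ge 0$, while you go through the Bochner formula for $|\nabla h|^2$ and the Kato inequality. After dropping the nonnegative $v^2|\nabla\omega|^2/(4\omega)$ term, your Kato step reduces exactly to the paper's perfect-square inequality (both assert $|h_{ij}|^2+2h_ih_jh_{ij}/v+h_i^4/v^2\ge 0$), so the two proofs are term-by-term equivalent.
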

\begin{proof}
The proof is similar to that of Lemma 2.1 in \cite{[Wu15]}, but is included
for completeness. We shall apply local coordinates to conveniently compute these complicated
evolution equations. Let $e_1, e_2,..., e_n$ be a local orthonormal frame field at a point
$x\in M^n$ and we adopt the notation that subscripts in $i$, $j$, and $k$, with
$1\leq i, j, k\leq n$, mean covariant differentiations in the $e_i$, $e_j$ and $e_k$,
directions respectively. We denote $h_i:=\nabla_i h$, $h_{ii}=\nabla_i\nabla_ih=\Delta h$
and $h_{ijj}:=\nabla_j\nabla_j\nabla_i h$, etc.

By the definition of $\omega$ in \eqref{lemmw2}, we compute that
\begin{equation}\label{lempro1}
\omega_j=\frac{2h_i h_{ij}}{(1-h)^2}+\frac{2h^2_i
h_j}{(1-h)^3},
\end{equation}
\[
\langle\nabla f,\nabla\omega\rangle=\frac{2h_{ij}h_if_j}{(1-h)^2}
+\frac{2h^2_ih_jf_j}{(1-h)^3},
\]
and
\[
\Delta\omega=\frac{2|h_{ij}|^2}{(1-h)^2}+\frac{2h_i
h_{ijj}}{(1-h)^2}+\frac{8h_ih_j
h_{ij}}{(1-h)^3}+\frac{2h^2_i
h_{jj}}{(1-h)^3}+\frac{6h^2_i h^2_j}{(1-h)^4}.
\]
Hence,
\begin{equation*}
\begin{aligned}
\Delta_f\,\omega&=\Delta\omega-\langle\nabla f,\nabla\omega\rangle\\
&=\frac{2|h_{ij}|^2}{(1-h)^2}+\frac{2h_i
h_{ijj}}{(1-h)^2}+\frac{8h_ih_j h_{ij}}{(1-h)^3}
+\frac{2h^2_i h_{jj}}{(1-h)^3}+\frac{6h^4_i}{(1-h)^4}\\
&\quad-\frac{2h_{ij}h_if_j}{(1-h)^2}
-\frac{2h^2_ih_jf_j}{(1-h)^3}.
\end{aligned}
\end{equation*}
Using the Ricci identity $h_{ijj}=h_{jji}+\mathrm{R}_{ij}h_j$, the above
inequality becomes
\begin{equation}
\begin{aligned}\label{bak-em}
\Delta_f\,\omega
&=\frac{2|h_{ij}|^2}{(1-h)^2}+\frac{2h_i
(\Delta_f h)_i}{(1-h)^2}+\frac{2(\mathrm{R}_{ij}+f_{ij})h_ih_j}{(1-h)^2}
+\frac{8h_ih_jh_{ij}}{(1-h)^3}\\
&\quad+\frac{6h^4_i}{(1-h)^4}+\frac{2h^2_i\cdot\Delta_fh}{(1-h)^3}.
\end{aligned}
\end{equation}
From \eqref{lemequ} and \eqref{lemmw2}, we obtain
\begin{equation}
\begin{aligned}\label{drivatt}
\frac{\partial\omega}{\partial t}&=\frac{2\nabla_ih\cdot\nabla_i\left(\Delta_fh+|\nabla
h|^2+p(De^h)^{\alpha-1}+q(De^h)^{\beta-1}+\mu\right)}{(1-h)^2}\\
&\quad+\frac{2|\nabla h|^2\left(\Delta_fh+|\nabla
h|^2+p(De^h)^{\alpha-1}+q(De^h)^{\beta-1}+\mu\right)}{(1-h)^3}\\
&=\frac{2\nabla h \nabla \Delta_fh}{(1-h)^2}+\frac{4h_ih_jh_{ij}}{(1-h)^2}
+\frac{2h^2_i\Delta_f h}{(1-h)^3}+\frac{2|\nabla h|^4}{(1-h)^3}\\
&\quad+2\left(\alpha-1+\frac{1}{1-h}\right)p(De^h)^{\alpha-1}\omega
+\frac{2p_ih_i(De^h)^{\alpha-1}}{(1-h)^2}\\
&\quad+2\left(\beta-1+\frac{1}{1-h}\right)q(De^h)^{\beta-1}\omega
+\frac{2q_ih_i(De^h)^{\beta-1}}{(1-h)^2}\\
&\quad+\frac{2\mu}{1-h}\omega+\frac{2\mu_ih_i}{(1-h)^2}.
\end{aligned}
\end{equation}
Combining \eqref{bak-em} and \eqref{drivatt}, we get
\begin{equation*}
\begin{aligned}
\frac 12\left(\Delta_f-\frac{\partial}{\partial
t}\right)\omega&=\frac{|h_{ij}|^2}{(1-h)^2}
+\frac{(\mathrm{R}_{ij}+f_{ij})h_ih_j}{(1-h)^2}+\frac{4h_ih_jh_{ij}}{(1-h)^3}\\
&\quad+\frac{3h^4_i}{(1-h)^4}-\frac{2h_ih_jh_{ij}}{(1-h)^2}-\frac{h^4_i}{(1-h)^3}\\
&\quad-\left(\alpha-1+\frac{1}{1-h}\right)p(De^h)^{\alpha-1}\omega
-\frac{p_ih_i(De^h)^{\alpha-1}}{(1-h)^2}\\
&\quad-\left(\beta-1+\frac{1}{1-h}\right)q(De^h)^{\beta-1}\omega
-\frac{q_ih_i(De^h)^{\beta-1}}{(1-h)^2}\\
&\quad-\frac{\mu}{1-h}\omega-\frac{\mu_ih_i}{(1-h)^2}.
\end{aligned}
\end{equation*}
Since $\mathrm{Ric}_f\geq-(n-1)K$ for some constant $K\geq0$, we have
\[
(\mathrm{R}_{ij}+f_{ij})h_ih_j\geq-(n-1)Kh_i^2.
\]
Since $1-h\geq 1$, we also have
\[
\frac{|h_{ij}|^2}{(1-h)^2}+\frac{2h_ih_j
h_{ij}}{(1-h)^3}+\frac{h^4_i}{(1-h)^4}\geq 0.
\]
Using these two inequalities, the above equation can be simplified as
\begin{equation}
\begin{aligned}\label{dgele}
\frac 12\left(\Delta_f-\frac{\partial}{\partial
t}\right)\omega&\geq-\frac{(n-1)Kh^2_i}{(1-h)^2}+\frac{2h_ih_j
h_{ij}}{(1-h)^3}+\frac{2h^4_i}{(1-h)^4}\\
&\quad-\frac{2h_ih_jh_{ij}}{(1-h)^2}-\frac{h^4_i}{(1-h)^3}\\
&\quad-\left(\alpha-1+\frac{1}{1-h}\right)p(De^h)^{\alpha-1}\omega
-\frac{p_ih_i(De^h)^{\alpha-1}}{(1-h)^2}\\
&\quad-\left(\beta-1+\frac{1}{1-h}\right)q(De^h)^{\beta-1}\omega
-\frac{q_ih_i(De^h)^{\beta-1}}{(1-h)^2}\\
&\quad-\frac{\mu}{1-h}\omega-\frac{\mu_ih_i}{(1-h)^2}.
\end{aligned}
\end{equation}
From \eqref{lempro1}, we know that
\[
\omega_jh_j=\frac{2h_ih_j h_{ij}}{(1-h)^2}+\frac{2h^4_i}{(1-h)^3}.
\]
Using this formula, \eqref{dgele} can be rewritten as
\begin{equation*}
\begin{aligned}
\frac 12\left(\Delta_f-\frac{\partial}{\partial
t}\right)\omega&\geq-\frac{(n-1)Kh_i^2}{(1-h)^2}+\frac{h}{1-h}\omega_jh_j
+\frac{h^4_i}{(1-h)^3}\\
&\quad-\left(\alpha-1+\frac{1}{1-h}\right)p(De^h)^{\alpha-1}\omega
-\frac{p_ih_i(De^h)^{\alpha-1}}{(1-h)^2}\\
&\quad-\left(\beta-1+\frac{1}{1-h}\right)q(De^h)^{\beta-1}\omega
-\frac{q_ih_i(De^h)^{\beta-1}}{(1-h)^2}\\
&\quad-\frac{\mu}{1-h}\omega-\frac{\mu_ih_i}{(1-h)^2}.
\end{aligned}
\end{equation*}
By the definition of $\omega$, the desired inequality immediately follows.
\end{proof}

In the rest of this section, we will apply Lemma \ref{Lem2.1} and the
localized technique of Souplet-Zhang \cite{[Sou-Zh]} and the author
\cite{[Wu15]} to give an elliptic-type gradient estimate for positive
smooth solutions to the equation \eqref{Yameq}.

We first introduce a useful space-time cut-off function originated by
Li-Yau \cite{[Li-Yau]} (see also \cite{[Sou-Zh]} and \cite{[Wu15]})
as follows.

\begin{lemma}\label{cutoff}
Fix $t_0\in \mathbb{R}$ and $T>0$. For any given $\tau\in(t_0-T,t_0]$,
there exists a smooth function
$\bar\psi:[0,\infty)\times[t_0-T,t_0]\to\mathbb R$ satisfying
following propositions:
\begin{enumerate}
\item
$0\le\overline{\psi}(r,t)\le 1$ in $[0,R]\times[t_0-T,t_0]$, and it is supported
in a open subset of $[0,R]\times[t_0-T,t_0]$.
\item
$\overline{\psi}(r,t)=1$ and $\partial_r\overline{\psi}(r,t)=0$
in $[0,R/2]\times[\tau,t_0]$ and $[0,R/2]\times[t_0-T,t_0]$, respectively.
\item
$|\partial_t\overline{\psi}|\leq\frac{C}{\tau-(t_0-T)}{\overline{\psi}}^{\frac12}$
in $[0,\infty)\times[t_0-T,t_0]$ for some $C>0$, and $\overline{\psi}(r,t_0-T)=0$
for all $r\in[0,\infty)$.
\item
$-\frac{C_\epsilon}{R}{\overline{\psi}}^\epsilon\leq\partial_r\overline{\psi}\leq 0$
and $|\partial_r^2\overline{\psi}|\leq\frac{C_\epsilon}{R^2}{\overline{\psi}}^\epsilon$
in $[0,\infty)\times[t_0-T,t_0]$ for every $\epsilon\in(0,1)$ with some constant
$C_\epsilon$ depending on $\epsilon$.
\end{enumerate}
\end{lemma}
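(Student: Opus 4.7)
The plan is to construct $\bar\psi$ as a product $\bar\psi(r,t)=\eta(r)\chi(t)$ of a spatial cut-off $\eta$ and a temporal cut-off $\chi$, so that each factor handles one set of derivative conditions independently. The key algebraic observation that makes the product decomposition work is that whenever $\eta,\chi\in[0,1]$ and $\epsilon\in(0,1)$, one has $\chi\le\chi^{\epsilon}$ and $\eta\le\eta^{1/2}$; thus a bound of the shape $|\eta'|\le C_\epsilon R^{-1}\eta^{\epsilon}$ for $\eta$ alone promotes to $|\partial_r\bar\psi|=|\eta'|\chi\le C_\epsilon R^{-1}\eta^{\epsilon}\chi^{\epsilon}=C_\epsilon R^{-1}\bar\psi^{\epsilon}$, and likewise for the other estimates.

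First I would build the spatial factor. Fix a smooth non-increasing $\phi_0:\mathbb R\to[0,1]$ that equals $1$ on $(-\infty,1/2]$ and $0$ on $[1,\infty)$. For fixed $\epsilon\in(0,1)$, define $\phi_\epsilon:=\phi_0^{k}$ with an integer $k>2/(1-\epsilon)$; since $\phi_0\le 1$, one has $\phi_0^{k-j}\le\phi_0^{k\epsilon}=\phi_\epsilon^{\epsilon}$ for $j=1,2$, and differentiating once or twice produces $|\phi_\epsilon'|\le C_\epsilon\phi_\epsilon^{\epsilon}$ and $|\phi_\epsilon''|\le C_\epsilon\phi_\epsilon^{\epsilon}$ (the second-derivative bound needs the stronger inequality $k-2\ge k\epsilon$, which is why $k>2/(1-\epsilon)$). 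Setting $\eta(r):=\phi_\epsilon(r/R)$, the chain rule yields property (4) for $\eta$, together with the parts of (1) and (2) that involve $r$.

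Next I would build the temporal factor. Writing $\Delta\tau:=\tau-(t_0-T)>0$, let $\psi_1:\mathbb R\to[0,1]$ be smooth and non-decreasing with $\psi_1=0$ on $(-\infty,0]$ and $\psi_1=1$ on $[1,\infty)$, chosen by the same high-power-of-a-bump device (exponent $k\ge 2$) so that $|\psi_1'|\le C\psi_1^{1/2}$. Define $\chi(t):=\psi_1\!\left((t-(t_0-T))/\Delta\tau\right)$; then $\chi(t_0-T)=0$, $\chi\equiv 1$ on $[\tau,t_0]$, and $|\chi'|\le(C/\Delta\tau)\chi^{1/2}$. Setting $\bar\psi:=\eta\chi$ and invoking the absorption noted above, all four items follow: (1) is immediate from $0\le\eta,\chi\le 1$; (2) uses $\eta\equiv 1$ on $[0,R/2]$ and $\chi\equiv 1$ on $[\tau,t_0]$; (3) follows from the bound on $\chi'$ together with $\eta\le\eta^{1/2}\le\bar\psi^{1/2}/\chi^{1/2}$; and (4) follows from the bounds on $\phi_\epsilon',\phi_\epsilon''$ together with $\eta'\le 0$ and $\chi\ge 0$.

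The only genuinely non-trivial point is designing the one-variable profile $\phi_\epsilon$ so that it is simultaneously smooth, has a flat plateau at $1$, vanishes on a half-line, and satisfies the H\"older-type bound $|\phi_\epsilon'|\le C_\epsilon\phi_\epsilon^{\epsilon}$ for the prescribed $\epsilon\in(0,1)$; the power-of-a-bump construction handles this cleanly, and once it is in hand everything else reduces to elementary rescaling and the chain rule.
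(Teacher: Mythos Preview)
The paper does not give its own proof of this lemma; it is stated with attribution to Li--Yau, Souplet--Zhang, and \cite{[Wu15]}, so there is no in-paper argument to compare against. Your product construction $\bar\psi(r,t)=\eta(r)\chi(t)$ with a power-of-a-bump profile is exactly the standard device from those references, and your verifications of (1)--(3) and of the sign condition in (4) via the absorption $\chi\le\chi^\epsilon$, $\eta\le\eta^{1/2}$ are correct.

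One technical point deserves tightening. Item (4) asks for a \emph{single} $\bar\psi$ satisfying the bounds for \emph{every} $\epsilon\in(0,1)$, but your choice $\phi_\epsilon=\phi_0^k$ with $k>2/(1-\epsilon)$ makes the spatial factor---and hence $\bar\psi$ itself---depend on the particular $\epsilon$. Since the bound for exponent $\epsilon_0$ automatically yields the bound for all smaller $\epsilon$ (because $\bar\psi^{\epsilon_0}\le\bar\psi^\epsilon$ when $0\le\bar\psi\le1$), any fixed $k$ covers $\epsilon\in(0,1-2/k]$, but no finite $k$ covers all of $(0,1)$. The simplest fix is to take $\phi_0$ of the standard $e^{-1/x}$-type, so that it already vanishes to infinite order at the edge of its support; then $|\phi_0^{(j)}|/\phi_0^\epsilon$ is bounded for every $\epsilon<1$ and $j=1,2$, and no power is needed. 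Alternatively, you can simply observe that the proof of Theorem~\ref{main} only invokes (4) with $\epsilon\in\{1/2,3/4\}$, so any fixed $k\ge 8$ already suffices for the application.
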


Then we apply Lemma \ref{Lem2.1} and Lemma \ref{cutoff} to prove Theorem \ref{main}
via the maximum principle in a local space-time supported set. The proof mainly
follows the arguments of \cite{[BCP]} and \cite{[Wu15]}, which is a little different
from \cite{[Sou-Zh]}.

\begin{proof}[Proof of Theorem \ref{main}]
Pick any number $\tau\in(t_0-T,t_0]$ and choose a cutoff function $\bar\psi(r,t)$
satisfying the conditions of Lemma \ref{cutoff}. Briefly, we will show that
the inequalities in Theorem \ref{main} hold at the point $(x,\tau)$ for all
$x\in M$ such that $d(x,x_0)<R/2$. Since $\tau$ is arbitrary, the assertion of
theorem will immediately follow. In the following we provide a detailed description.

Let $\psi:M\times[t_0-T,t_0]\to\mathbb R$ be the cutoff function
$\psi=\overline{\psi}(d(x,x_0),t)\equiv\psi(r,t)$.
Then $\psi(x,t)$ could be viewed as smooth cut-off function supported in $Q_{R,T}$.
Our strategy is to estimate $(\Delta_f-\frac{\partial}{\partial t})(\psi\omega)$
and carefully analyze the result at a space-time point where the function
$\psi\omega$ attains its maximum.

We apply Lemma \ref{Lem2.1} to conclude that
\begin{equation}
\begin{aligned}\label{lemdx3}
\frac 12\left(\Delta_f-\frac{\partial}{\partial t}\right)&(\psi\omega)-\left(\frac{h}{1-h}\nabla
h+\frac{\nabla\psi}{\psi}\right)\cdot\nabla(\psi\omega)\\
&\geq\psi(1-h)\omega^2-\left(\frac{h}{1-h}\nabla h\cdot\nabla\psi\right)\omega
-\frac{|\nabla\psi|^2}{\psi}\omega\\
&\quad+\frac 12(\Delta_f\psi)\omega-\frac 12\psi_t\omega-(n-1)K\psi\omega\\
&\quad-\left(\alpha-1+\frac{1}{1-h}\right)p(De^h)^{\alpha-1}\psi\omega
-\frac{\psi p_ih_i(De^h)^{\alpha-1}}{(1-h)^2}\\
&\quad-\left(\beta-1+\frac{1}{1-h}\right)q(De^h)^{\beta-1}\psi\omega
-\frac{\psi q_ih_i(De^h)^{\beta-1}}{(1-h)^2}\\
&\quad-\frac{\mu}{1-h}\psi\omega-\frac{\psi\mu_ih_i}{(1-h)^2}.
\end{aligned}
\end{equation}
Now let $(x_1,t_1)$ be a maximum space-time point for $\psi\omega$ in the closed set
\[
\left\{(x,t)\in M\times[t_0-T,\tau]\,|d(x,x_0)\leq R\right\}.
\]
We may assume $(\psi\omega)(x_1,t_1)>0$; otherwise, $\omega(x,\tau)\leq0$ and
the conclusion naturally holds at $(x,\tau)$ whenever $d(x, x_0)<\frac R2$.
Notice that $t_1\neq t_0-T$, since we assume $(\psi\omega)(x_1,t_1)>0$. We may
also assume that $\psi(x,t)$ is smooth at $(x_1,t_1)$ due to the standard
Calabi argument \cite{[Cala]}. Since $(x_1,t_1)$ is a maximum space-time
point, at this point we have
\[
\Delta_f(\psi\omega)\leq0,\quad(\psi\omega)_t\geq0
\quad \mathrm{and}\quad\nabla(\psi\omega)=0.
\]
Using the above  estimates at $(x_1,t_1)$, \eqref{lemdx3} can be simplified as
\begin{equation}\label{lefor}
\begin{aligned}
\psi(1-h)\omega^2\leq&\left(\frac{h}{1-h}\nabla h\cdot\nabla\psi
+\frac{|\nabla\psi|^2}{\psi}\right)\omega\\
&-\frac 12(\Delta_f\psi)\omega+\frac 12\psi_t\omega+(n-1)K\psi\omega\\
&+\left(\alpha-1+\frac{1}{1-h}\right)pu^{\alpha-1}\psi\omega
+\frac{\psi p_ih_iu^{\alpha-1}}{(1{-}h)^2}\\
&+\left(\beta-1+\frac{1}{1-h}\right)qu^{\beta-1}\psi\omega
+\frac{\psi q_ih_iu^{\beta-1}}{(1{-}h)^2}\\
&+\frac{\mu}{1-h}\psi\omega+\frac{\psi\mu_ih_i}{(1-h)^2}
\end{aligned}
\end{equation}
at $(x_1,t_1)$,  where in the above estimates we have used the fact that $u=D e^h$.

\

In the rest, we will use \eqref{lefor} at the maximum space-time point
$(x_1,t_1)$ to give the desired gradient estimate in Theorem \ref{main}. We will
achieve it by two steps.

\vspace{0.5em}

\textbf{Case One}: We assume the maximum space-point $x_1\not\in B(x_0,1)$. Recall
that, $\mathrm{Ric}_f\geq-(n-1)K$ and $r(x_1,x_0)\geq 1$ in $B(x_0,R)$, $R\geq 2$. Hence by
the $f$-Laplacian comparison theorem (Theorem 3.1 in \cite{[WW]}), we have
\begin{equation}\label{gencomp}
\Delta_f\,r(x_1)\leq\sigma+(n-1)K(R-1),
\end{equation}
where $\sigma:=\max_{\{x|d(x,x_0)=1\}}\Delta_f\,r(x)$, which will be used later.
Below we will carefully estimate upper bounds for each term on the right hand
side (RHS) of \eqref{lefor}, similar to the arguments of Souplet-Zhang \cite{[Sou-Zh]}
and the author \cite{[Wu15]}. This will lead us to give the desired result. We remark
that the Young's inequality will be repeatedly used in the following estimates. Below
we let $c$ denote a constant depending only on $n$ whose value may change from line to line.

First, we estimate the first term on the RHS of \eqref{lefor}:
\begin{equation}
\begin{aligned}\label{term1}
\left(\frac{h}{1-h}\nabla h\cdot\nabla\psi\right)\omega
&\leq|h|\cdot|\nabla\psi|\cdot\omega^{3/2}\\
&=\left[\psi(1-h)\omega^2\right]^{3/4}
\cdot\frac{|h|\cdot|\nabla\psi|}{[\psi(1-h)]^{3/4}}\\
&\leq\frac 13\psi(1-h)\omega^2+c
\frac{(h|\nabla\psi|)^4}{[\psi(1-h)]^3}\\
&\leq\frac 13\psi(1-h)\omega^2+\frac{ch^4}{R^4(1-h)^3}.
\end{aligned}
\end{equation}

For the second term on the RHS of (\ref{lefor}), we have
\begin{equation}
\begin{aligned}\label{term2}
\frac{|\nabla\psi|^2}{\psi}\omega
&=\psi^{1/2}\omega\cdot\frac{|\nabla\psi|^2}{\psi^{3/2}}\\
&\leq\frac{1}{18}\psi\omega^2+c
\left(\frac{|\nabla\psi|^2}{\psi^{3/2}}\right)^2\\
&\leq\frac{1}{18}\psi\omega^2+\frac{c}{R^4}.
\end{aligned}
\end{equation}

For the third term on the RHS of \eqref{lefor}, since $\psi$
is a radial function, then at $(x_1,t_1)$, using \eqref{gencomp} we have
\begin{equation}
\begin{aligned}\label{term3}
-\frac 12(\Delta_f\psi)\omega&=-\frac 12\left[(\partial_r\psi)\Delta_fr+(\partial^2_r\psi)
|\nabla r|^2\right]\omega\\
&\leq-\frac 12\left[\partial_r\psi\left(\sigma+(n-1)K(R-1)\right)+\partial^2_r\psi\right]\omega\\
&\leq\left[|\partial^2_r\psi|+\left(\sigma^++(n-1)K(R-1)\right)|\partial_r\psi|\right]\omega\\
&=\psi^{1/2}\omega\frac{|\partial^2_r\psi|}{\psi^{1/2}}
+\psi^{1/2}\omega[\sigma^++(n-1)K(R-1)]\frac{|\partial_r\psi|}{\psi^{1/2}}\\
&\leq\frac{1}{18}\psi\omega^2+c\frac{|\partial^2_r\psi|^2}{\psi}
+c\frac{(\sigma^+)^2|\partial_r\psi|^2}{\psi}
+c\frac{K^2(R-1)^2|\partial_r\psi|^2}{\psi}\\
&\leq\frac{1}{18}\psi\omega^2+\frac{c}{R^4}+\frac{c(\sigma^+)^2}{R^2}+cK^2,
\end{aligned}
\end{equation}
where $\sigma^+:=\max\{\sigma,0\}$, and in the last inequality we have used proposition (4)
in Lemma \ref{cutoff}.

For the fourth term on the RHS of \eqref{lefor}, we have
\begin{equation}
\begin{aligned}\label{term4}
\frac 12|\psi_t|\omega&=\frac 12\psi^{1/2}\omega\frac{|\psi_t|}{\psi^{1/2}}\\
&\leq\frac{1}{18}\left(\psi^{1/2}\omega\right)^2+c
\left(\frac{|\psi_t|}{\psi^{1/2}}\right)^2\\
&\leq\frac{1}{18}\psi\omega^2+\frac{c}{(\tau-t_0+T)^2}.
\end{aligned}
\end{equation}

For the fifth term on the RHS of \eqref{lefor}, we have
\begin{equation}
\begin{aligned}\label{term5}
(n-1)K\psi\omega&=(n-1)\psi^{1/2}\omega\cdot\psi^{1/2}K\\
&\leq\frac{1}{18}\psi\omega^2+cK^2.
\end{aligned}
\end{equation}

For the sixth term on the RHS of \eqref{lefor}, we easily get
\begin{equation*}
\begin{aligned}
\Big(\alpha-1+\frac{1}{1-h}\Big)p&\leq(\alpha-1)p+\frac{p^+}{1-h}\\
&\leq[(\alpha-1)p]^++p^+,
\end{aligned}
\end{equation*}
where in the above inequality we used the fact $\frac{1}{1-h}>1$ due to
$h\leq 0$. Hence we have
\begin{equation}
\begin{aligned}\label{term8a}
\Big(\alpha-1+\frac{1}{1-h}\Big)pu^{\alpha-1}\psi\omega
&\leq\Big([(\alpha-1)p]^++p^+\Big)\,u^{\alpha-1}\psi\omega\\
&\leq\frac{1}{18}\psi\omega^2+c\Big([(\alpha-1)p]^++p^+\Big)^2
\sup_{Q_{R,T}}\{u^{2(\alpha-1)}\},
\end{aligned}
\end{equation}
where in the last inequality, we have used the fact $\psi\leq 1$.

For the seventh term on the RHS of \eqref{lefor}, since $h<0$,
we have the following estimate
\begin{equation}
\begin{aligned}\label{term9a}
\frac{u^{\alpha-1}}{(1-h)^2}\psi p_ih_i
&\leq\frac{u^{\alpha-1}}{(1-h)^2}\psi|p_i|\cdot|h_i|\\
&\leq u^{\alpha-1}\psi\omega^{1/2}|\nabla p(x_1,t_1)|\\
&\leq\frac{1}{18}(\psi^{1/4}\omega^{1/2})^4+c\left(\psi^{3/4}u^{\alpha-1}|\nabla p(x_1,t_1)|\right)^{\frac 43}\\
&\leq\frac{1}{18}\psi\omega^2+c\sup_{Q_{R,T}}|\nabla p|^{\frac 43}\sup_{Q_{R,T}}\{u^{\frac 43(\alpha-1)}\}.
\end{aligned}
\end{equation}

For the eighth and ninth terms on the RHS of \eqref{lefor}, the estimates are
very similar to the sixth and seventh terms. We summarize these estimates without
providing the detailed proof.
\begin{equation}\label{term8ak}
\left(\beta-1+\frac{1}{1-h}\right)qu^{\beta-1}\psi\omega\leq
\frac{1}{18}\psi\omega^2+c\Big([(\beta-1)q]^++q^+\Big)^2
\sup_{Q_{R,T}}\{u^{2(\beta-1)}\}
\end{equation}
and
\begin{equation}\label{term9ak}
\frac{u^{\beta-1}}{(1-h)^2}\psi q_ih_i\leq
\frac{1}{18}\psi\omega^2+c\sup_{Q_{R,T}}|\nabla q|^{\frac 43}\sup_{Q_{R,T}}\{u^{\frac 43(\beta-1)}\}.
\end{equation}

For the tenth term on the RHS of \eqref{lefor}, similar to \eqref{term8a},
we have the following estimate
\begin{equation}\label{term10}
\frac{\mu}{1-h}\psi\omega\leq\frac{1}{18}\psi\omega^2+c(\mu^+)^2,
\end{equation}
where $\mu^+:=\sup_{(x,t)\in Q_{R,T}}\{\mu^+(x,t),0\}$ and $\mu^+(x,t)=\max\{\mu(x,t),0\}$.
For the eleventh term on the RHS of \eqref{lefor}, similar to \eqref{term9a}, we have the estimate
\begin{equation}\label{term11}
\frac{\psi\mu_ih_i}{(1-h)^2}\leq\frac{1}{18}\psi\omega^2+c\sup_{Q_{R,T}}|\nabla\mu|^{\frac 43}.
\end{equation}

\

In the following, we will apply the above estimates to prove the theorem.
Substituting \eqref{term1}-\eqref{term11} into the RHS of \eqref{lefor},
at $(x_1,t_1)$, we have that
\begin{equation*}
\begin{aligned}
\psi(1-h)\omega^2&\leq \frac 13\psi(1-h)\omega^2+\frac{ch^4}{R^4(1-h)^3}
+\frac{10}{18}\psi\omega^2\\
&\quad+\frac{c}{R^4}+\frac{c(\sigma^+)^2}{R^2}+\frac{c}{(\tau-t_0+T)^2}+cK^2
+c(\mu^+)^2+c\sup_{Q_{R,T}}|\nabla\mu|^{\frac 43}\\
&\quad+c\Big([(\alpha-1)p]^++p^+\Big)^2\sup_{Q_{R,T}}\{u^{2(\alpha-1)}\}
+c\sup_{Q_{R,T}}|\nabla p|^{\frac 43}\sup_{Q_{R,T}}\{u^{\frac 43(\alpha-1)}\}\\
&\quad+c\Big([(\beta-1)q]^++q^+\Big)^2\sup_{Q_{R,T}}\{u^{2(\beta-1)}\}
+c\sup_{Q_{R,T}}|\nabla q|^{\frac 43}\sup_{Q_{R,T}}\{u^{\frac 43(\beta-1)}\}.
\end{aligned}
\end{equation*}
Since $1-h\geq1$, the above estimate implies
\begin{equation*}
\begin{aligned}
\psi\omega^2&\leq\frac{ch^4}{R^4(1-h)^4}
+\frac{c}{R^4}+\frac{c(\sigma^+)^2}{R^2}+\frac{c}{(\tau-t_0+T)^2}+cK^2
+c(\mu^+)^2+c\sup_{Q_{R,T}}|\nabla\mu|^{\frac 43}\\
&\quad+c\Big([(\alpha-1)p]^++p^+\Big)^2\sup_{Q_{R,T}}\{u^{2(\alpha-1)}\}
+c\sup_{Q_{R,T}}|\nabla p|^{\frac 43}\sup_{Q_{R,T}}\{u^{\frac 43(\alpha-1)}\}\\
&\quad+c\Big([(\beta-1)q]^++q^+\Big)^2\sup_{Q_{R,T}}\{u^{2(\beta-1)}\}
+c\sup_{Q_{R,T}}|\nabla q|^{\frac 43}\sup_{Q_{R,T}}\{u^{\frac 43(\beta-1)}\}
\end{aligned}
\end{equation*}
at $(x_1,t_1)$. Moreover, since $\frac{h^4}{(1-h)^4}\leq 1$,
the above inequality implies that
\begin{equation*}
\begin{aligned}
(\psi^2\omega^2)(x_1,t_1)&\leq(\psi\omega^2)(x_1,t_1)\\
&\leq\frac{c}{R^4}+\frac{c(\sigma^+)^2}{R^2}+\frac{c}{(\tau-t_0+T)^2}+cK^2
+c(\mu^+)^2+c\sup_{Q_{R,T}}|\nabla\mu|^{\frac 43}\\
&\quad+c\Big([(\alpha-1)p]^++p^+\Big)^2\sup_{Q_{R,T}}\{u^{2(\alpha-1)}\}
+c\sup_{Q_{R,T}}|\nabla p|^{\frac 43}\sup_{Q_{R,T}}\{u^{\frac 43(\alpha-1)}\}\\
&\quad+c\Big([(\beta-1)q]^++q^+\Big)^2\sup_{Q_{R,T}}\{u^{2(\beta-1)}\}
+c\sup_{Q_{R,T}}|\nabla q|^{\frac 43}\sup_{Q_{R,T}}\{u^{\frac 43(\beta-1)}\}.
\end{aligned}
\end{equation*}
Since $\psi(x,\tau)=1$ when $d(x,x_0)<R/2$ by the proposition (2)
in Lemma \ref{cutoff}, from the above estimate, we in fact get
\begin{equation*}
\begin{aligned}
\omega(x,\tau)&=(\psi\omega)(x,\tau)\\
&\leq(\psi\omega)(x_1,t_1)\\
&\leq\frac{c}{R^2}+\frac{c\sigma^+}{R}+\frac{c}{\tau-t_0+T}+cK
+c\mu^++c\sup_{Q_{R,T}}|\nabla\mu|^{\frac 23}\\
&\quad+c\Big([(\alpha-1)p]^++p^+\Big)\sup_{Q_{R,T}}\{u^{\alpha-1}\}
+c\sup_{Q_{R,T}}|\nabla p|^{\frac 23}\sup_{Q_{R,T}}\{u^{\frac 23(\alpha-1)}\}\\
&\quad+c\Big([(\beta-1)q]^++q^+\Big)\sup_{Q_{R,T}}\{u^{\beta-1}\}
+c\sup_{Q_{R,T}}|\nabla q|^{\frac 23}\sup_{Q_{R,T}}\{u^{\frac 23(\beta-1)}\}
\end{aligned}
\end{equation*}
for all $x\in M$ such that $d(x,x_0)<R/2$. By the definition of
$w(x,\tau)$ and the fact that $\tau\in(t_0-T,t_0]$ was chosen
arbitrarily, we get the estimate
\begin{equation*}
\begin{aligned}
\frac{|\nabla h|}{(1-h)}(x,t)&\leq
\frac cR+c\sqrt{\frac{\sigma^+}{R}}+\frac{c}{\sqrt{t-t_0+T}}+c\sqrt{K}
+c\sqrt{\mu^+}+c\sup_{Q_{R,T}}|\nabla\mu|^{\frac 13}\\
&\quad+c\sqrt{[(\alpha-1)p]^++p^+}\cdot\sup_{Q_{R,T}}\{u^{\frac{\alpha-1}{2}}\}
+c\sup_{Q_{R,T}}|\nabla p|^{\frac 13}\sup_{Q_{R,T}}\{u^{\frac{\alpha-1}{3}}\}\\
&\quad+c\sqrt{[(\beta-1)q]^++q^+}\cdot\sup_{Q_{R,T}}\{u^{\frac{\beta-1}{2}}\}
+c\sup_{Q_{R,T}}|\nabla q|^{\frac 13}\sup_{Q_{R,T}}\{u^{\frac{\beta-1}{3}}\}
\end{aligned}
\end{equation*}
for all $(x,t)\in Q_{R/2,T}$ with $t\neq t_0-T$. Since $h=\ln(u/D)$,
substituting this into the above estimate completes the proof of
theorem when $x_1\not\in B(x_0,1)\subset B(x_0,R)$, where $R\geq 2$.

\vspace{.1in}

\textbf{Case Two}: We assume the maximum space-point $x_1\in B(x_0,1)$.
In this case, $\psi$ is constant in space direction in $B(x_0,R/2)$ by our assumption,
where $R\geq2$. So by \eqref{lefor}, we have
\begin{equation*}
\begin{aligned}
\psi\omega^2&\leq\frac 12\psi_t\omega+(n-1)K\psi\omega
+\frac{\mu}{1-h}\psi\omega+\frac{\psi\mu_ih_i}{(1-h)^2}\\
&\quad+\left(\alpha-1+\frac{1}{1-h}\right)p(De^h)^{\alpha-1}\psi\omega
+\frac{\psi p_ih_i(De^h)^{\alpha-1}}{(1-h)^2}\\
&\quad+\left(\beta-1+\frac{1}{1-h}\right)q(De^h)^{\beta-1}\psi\omega
+\frac{\psi q_ih_i(De^h)^{\beta-1}}{(1-h)^2}
\end{aligned}
\end{equation*}
at $(x_1,t_1)$, where we have used $1-h\geq 1$ on the left hand side of the above inequality.
By \eqref{term4}--\eqref{term11}, the above inequality can be estimated by
\begin{equation*}
\begin{aligned}
\psi\omega^2&\leq\frac{8}{18}\psi\omega^2+\frac{c}{(\tau-t_0+T)^2}+cK^2
+c(\mu^+)^2+c\sup_{Q_{R,T}}|\nabla\mu|^{\frac 43}\\
&\quad+c\Big([(\alpha-1)p]^++p^+\Big)^2\sup_{Q_{R,T}}\{u^{2(\alpha-1)}\}
+c\sup_{Q_{R,T}}|\nabla p|^{\frac 43}\sup_{Q_{R,T}}\{u^{\frac 43(\alpha-1)}\}\\
&\quad+c\Big([(\beta-1)q]^++q^+\Big)^2\sup_{Q_{R,T}}\{u^{2(\beta-1)}\}
+c\sup_{Q_{R,T}}|\nabla q|^{\frac 43}\sup_{Q_{R,T}}\{u^{\frac 43(\beta-1)}\}
\end{aligned}
\end{equation*}
at $(x_1,t_1)$. Since $\psi(x_1,t_1)=1$, the above inequality can be written as
\begin{equation*}
\begin{aligned}
\omega^2(x_1,t_1)&\leq\frac{c}{(\tau-t_0+T)^2}+cK^2+c(\mu^+)^2+c\sup_{Q_{R,T}}|\nabla\mu|^{\frac 43}\\
&\quad+c\Big([(\alpha-1)p]^++p^+\Big)^2\sup_{Q_{R,T}}\{u^{2(\alpha-1)}\}
+c\sup_{Q_{R,T}}|\nabla p|^{\frac 43}\sup_{Q_{R,T}}\{u^{\frac 43(\alpha-1)}\}\\
&\quad+c\Big([(\beta-1)q]^++q^+\Big)^2\sup_{Q_{R,T}}\{u^{2(\beta-1)}\}
+c\sup_{Q_{R,T}}|\nabla q|^{\frac 43}\sup_{Q_{R,T}}\{u^{\frac 43(\beta-1)}\}.
\end{aligned}
\end{equation*}
Since $\psi(x,\tau)=1$ when $d(x,x_0)<R/2$ by the proposition
(2) in Lemma \ref{cutoff}, the above estimate indeed gives that
\begin{equation*}
\begin{aligned}
\omega(x,\tau)&=(\psi\omega)(x,\tau)\\
&\leq(\psi\omega)(x_1,t_1)\\
&\leq\omega(x_1,t_1)\\
&\leq\frac{c}{\tau-t_0+T}+cK+c\mu^++c\sup_{Q_{R,T}}|\nabla\mu|^{\frac 23}\\
&\quad+c\Big([(\alpha-1)p]^++p^+\Big)\sup_{Q_{R,T}}\{u^{\alpha-1}\}
+c\sup_{Q_{R,T}}|\nabla p|^{\frac 23}\sup_{Q_{R,T}}\{u^{\frac 23(\alpha-1)}\}\\
&\quad+c\Big([(\beta-1)q]^++q^+\Big)\sup_{Q_{R,T}}\{u^{\beta-1}\}
+c\sup_{Q_{R,T}}|\nabla q|^{\frac 23}\sup_{Q_{R,T}}\{u^{\frac 23(\beta-1)}\}
\end{aligned}
\end{equation*}
for all $x\in M$ such that $d(x,x_0)<R/2$. By the definition of
$w(x,\tau)$ and the fact that $\tau\in(t_0-T,t_0]$ was chosen
arbitrarily, we in fact prove that the estimate in theorem still holds
when $x_1\in B(x_0,1)$.
\end{proof}


\section{Parabolic gradient estimate}\label{sec3}
In this section, by adapting the arguments of \cite{[Li-Yau],[Wu10]}, we
first give a useful lemma. Then we apply the lemma to prove Theorem \ref{mainpara}
by the maximum principle in a locally supported set of the manifold.

Let $(M,g,e^{-f}dv)$ be an $n$-dimensional complete smooth metric measure space. For
any point $x_0\in M$ and $R>0$, assume that $u(x,t)$ is a positive smooth solution to
the equation \eqref{Yameq} in $H_{2R,T}$, where $H_{2R,T}:=B(x_0,2R)\times[0,T]$, $T>0$.
Introduce a auxiliary function
\[
h(x,t):=\ln u(x,t)
\]
in $H_{2R,T}$. By the equation \eqref{Yameq}, the function $h$ satisfies
\begin{equation}\label{lemequ2l}
\left(\Delta_f-\frac{\partial}{\partial t}\right)h+|\nabla h|^2+p(x,t)(e^h)^{\alpha-1}
+q(x,t)(e^h)^{\beta-1}+\mu(x,t)=0.
\end{equation}
Then we have the following useful lemma, which will be important in the proof of
Theorem \ref{mainpara}.

\begin{lemma}\label{Lem3.1}
Let $(M,g,e^{-f}dv)$ be a complete smooth metric measure space. Assume that
$\mathrm{Ric}^m_f\geq-(m+n-1)K$ for some constant $K\geq0$ in $B(x_0,2R)$, where
$x_0\in M$ and $R>0$. Let $h(x,t)$ be a smooth function in $H_{2R,T}$
satisfying the equation \eqref{lemequ2l}. Then for any $\lambda>1$, the function
\begin{equation}\label{lemmaequ2}
F:=t\left[|\nabla h|^2-\lambda\Big(h_t-p(e^h)^{\alpha-1}-q(e^h)^{\beta-1}-\mu\Big)\right]
\end{equation}
satisfies
\begin{equation*}
\begin{aligned}
\left(\Delta_f-\frac{\partial}{\partial t}\right)F&\geq-\frac Ft
-2\langle\nabla h,\nabla F\rangle-2(m+n-1)Kt|\nabla h|^2\\
&\quad+\frac{2t}{m+n}\left[|\nabla h|^2+p(e^h)^{\alpha-1}+q(e^h)^{\beta-1}+\mu-h_t\right]^2\\
&\quad-(\alpha-1)p(e^h)^{\alpha-1}F+2(\lambda\alpha-1)t(e^h)^{\alpha-1}\langle\nabla h,\nabla p\rangle\\
&\quad+(\alpha-1)(\lambda\alpha-1)tp(e^h)^{\alpha-1}|\nabla h|^2
+\lambda t(e^h)^{\alpha-1}\Delta_fp\\
&\quad-(\beta-1)q(e^h)^{\beta-1}F+2(\lambda\beta-1)t(e^h)^{\beta-1}\langle\nabla h,\nabla q\rangle\\
&\quad+(\beta-1)(\lambda\beta-1)tq(e^h)^{\beta-1}|\nabla h|^2
+\lambda t(e^h)^{\beta-1}\Delta_fq\\
&\quad+2(\lambda-1)t\langle\nabla h,\nabla\mu\rangle+\lambda t\Delta_f\mu
\end{aligned}
\end{equation*}
for all $(x,t)$ in $H_{2R,T}$.
\end{lemma}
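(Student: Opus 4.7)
My plan is to adapt the standard Li--Yau calculation for the weighted heat equation (as in \cite{[Li-Yau],[Wu10]}) to the nonlinear setting of \eqref{lemequ2l}, treating the three ``forcing'' terms $pu^{\alpha-1}$, $qu^{\beta-1}$, $\mu$ uniformly. The workhorse will be the weighted Bochner formula
\[
\tfrac12\Delta_f|\nabla h|^2=|\nabla^2 h|^2+\langle\nabla h,\nabla\Delta_fh\rangle+\mathrm{Ric}_f(\nabla h,\nabla h),
\]
combined with the refined $m$-Bakry--\'Emery inequality $|\nabla^2 h|^2+\frac{\langle\nabla f,\nabla h\rangle^2}{m}\geq \frac{(\Delta_f h)^2}{m+n}$, which yields
\[
\tfrac12\Delta_f|\nabla h|^2\geq \tfrac{(\Delta_f h)^2}{m+n}+\langle\nabla h,\nabla\Delta_f h\rangle+\mathrm{Ric}^m_f(\nabla h,\nabla h)\geq \tfrac{(\Delta_f h)^2}{m+n}+\langle\nabla h,\nabla\Delta_f h\rangle-(m+n-1)K|\nabla h|^2.
\]
From \eqref{lemequ2l} we have $\Delta_f h=h_t-|\nabla h|^2-pu^{\alpha-1}-qu^{\beta-1}-\mu$, so $(\Delta_f h)^2$ equals the square appearing in the statement; this is the source of the $\frac{2t}{m+n}[\,\cdots\,]^2$ term once I multiply the Bochner inequality by $2t$.

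Next I would compute $(\Delta_f-\partial_t)$ applied to each piece of $G:=F/t=|\nabla h|^2-\lambda(h_t-pu^{\alpha-1}-qu^{\beta-1}-\mu)$. Differentiating \eqref{lemequ2l} in $t$ gives $(\Delta_f-\partial_t)h_t=-2\langle\nabla h,\nabla h_t\rangle-\partial_t(pu^{\alpha-1}+qu^{\beta-1}+\mu)$, which absorbs $h_{tt}$. For the nonlinear pieces, using $u=e^h$ and the product rule:
\[
\Delta_f(pu^{\alpha-1})=u^{\alpha-1}\Delta_f p+2(\alpha-1)u^{\alpha-1}\langle\nabla p,\nabla h\rangle+(\alpha-1)pu^{\alpha-1}\bigl[(\alpha-1)|\nabla h|^2+\Delta_f h\bigr],
\]
and $\partial_t(pu^{\alpha-1})=p_tu^{\alpha-1}+(\alpha-1)pu^{\alpha-1}h_t$. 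Analogous formulas hold for $qu^{\beta-1}$ and (trivially) for $\mu$. I would then replace $\Delta_f h-h_t$ everywhere by $-|\nabla h|^2-pu^{\alpha-1}-qu^{\beta-1}-\mu$ using the PDE; this transforms the $(\alpha-1)pu^{\alpha-1}[\cdots+\Delta_f h-h_t]$ contribution into the coefficients $(\alpha-1)(\lambda\alpha-1)$ that appear in the statement, after the combination with $-\lambda$ times the corresponding $h_t$-piece.

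The final step is to multiply through by $t$ and reorganize. Since $F=tG$, one has $(\Delta_f-\partial_t)F=-G+t(\Delta_f-\partial_t)G=-F/t+t(\Delta_f-\partial_t)G$, which produces the $-F/t$ term. The gradient cross-terms produced by the Bochner identity, namely $2t\langle\nabla h,\nabla\Delta_f h\rangle$ together with $-2\lambda t\langle\nabla h,\nabla h_t\rangle$ and the $2(\alpha-1)tu^{\alpha-1}\langle\nabla p,\nabla h\rangle$-type contributions from $\Delta_f(pu^{\alpha-1})$ and from differentiating the PDE in space, must be recombined into $-2\langle\nabla h,\nabla F\rangle$ plus the residual gradient terms $2(\lambda\alpha-1)tu^{\alpha-1}\langle\nabla h,\nabla p\rangle$, $2(\lambda\beta-1)tu^{\beta-1}\langle\nabla h,\nabla q\rangle$, and $2(\lambda-1)t\langle\nabla h,\nabla\mu\rangle$. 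The identity that makes this work is $\nabla F=t\nabla G$ with $\nabla G=\nabla|\nabla h|^2-\lambda\nabla(h_t-pu^{\alpha-1}-qu^{\beta-1}-\mu)$, where the latter is rewritten via $\nabla\Delta_f h=\nabla(h_t-|\nabla h|^2-pu^{\alpha-1}-qu^{\beta-1}-\mu)$.

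The principal obstacle is purely combinatorial: tracking the interaction of $\lambda$, $\alpha$, $\beta$ across the Bochner cross-term, the $-\lambda\partial_t(\cdots)$ terms, and the $\Delta_f$-product-rule terms, so that exactly the coefficients $\lambda\alpha-1$, $(\alpha-1)(\lambda\alpha-1)$ (and the analogous $\beta$-coefficients), $\lambda-1$, and $\lambda$ emerge, while the remaining gradient terms collapse into the single $-2\langle\nabla h,\nabla F\rangle$. No new geometric input is required beyond the $m$-Bakry--\'Emery Bochner inequality and the curvature bound; everything else is algebraic.
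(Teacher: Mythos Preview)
Your proposal is correct and follows essentially the same route as the paper's proof: both rely on the $m$-Bakry--\'Emery Bochner inequality to bound $\Delta_f|\nabla h|^2$ from below, differentiate the PDE \eqref{lemequ2l} to handle the $h_t$ piece, expand $\Delta_f\bigl(p(e^h)^{\alpha-1}\bigr)$ via the product rule, and then use the identity $\Delta_fh=-F/(\lambda t)-(1-1/\lambda)|\nabla h|^2$ to turn the cross-terms into $-2\langle\nabla h,\nabla F\rangle$ plus the stated residuals. The only cosmetic difference is organizational---you work with $G=F/t$ and then multiply by $t$, while the paper computes $\Delta_fF$ and $F_t$ directly---and your observation that the $p_t$, $q_t$, $\mu_t$ contributions cancel (between the $-\lambda\partial_t\mathcal{D}$ from the time-differentiated PDE and the $+\lambda\partial_t\mathcal{D}$ from $\partial_tG$) is exactly what the paper's absorption into $F_t/t-F/t^2$ accomplishes.
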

\begin{proof}[Proof of Lemma \ref{Lem3.1}]
The proof follows by direct computations.
Using \eqref{lemequ2l} and \eqref{lemmaequ2}, by the definition of $F$, we compute that
\[
\Delta_f F=t\left[\Delta_f|\nabla h|^2-\lambda\Delta_fh_t+\lambda\Delta_f\mathcal{D}\right],
\]
where $\mathcal{D}:=p(e^h)^{\alpha-1}+q(e^h)^{\beta-1}+\mu$.  By the Bochner formula of the
$m$-Bakry-\'Emery Ricci tensor and the assumption $\mathrm{Ric}^m_f\geq-(m+n-1)K$, we have
\[
\Delta_f|\nabla h|^2\geq\frac{2(\Delta_f h)^2}{m+n}+2\langle\nabla h,\nabla\Delta_f h\rangle-2(m+n-1)K|\nabla h|^2.
\]
Hence,
\begin{equation}\label{bake-em2}
\Delta_f F\geq t\left[\frac{2(\Delta_f h)^2}{m+n}+2\langle\nabla h,\nabla\Delta_f h\rangle
-2(m+n-1)K|\nabla h|^2-\lambda\Delta_fh_t+\lambda\Delta_f\mathcal{D}\right].
\end{equation}
Notice that by \eqref{lemequ2l} and \eqref{lemmaequ2}, we have the following equality:
\begin{equation}
\begin{aligned}\label{impform}
-\lambda\Delta_fh_t{+}2\langle\nabla h,\nabla\Delta_f h\rangle&=
\frac{F_t}{t}-\frac{F}{t^2}+2(\lambda{-}1)\nabla h\nabla h_t+2\langle\nabla h,\nabla\Delta_f h\rangle\\
&=\frac{F_t}{t}-\frac{F}{t^2}+2(\lambda{-}1)\nabla h\nabla(\Delta_fh{+}|\nabla h|^2{+}\mathcal{D})
{+}2\langle\nabla h,\nabla\Delta_f h\rangle\\
&=\frac{F_t}{t}-\frac{F}{t^2}-\frac 2t\langle\nabla h,\nabla F\rangle
+2(\lambda-1)\langle\nabla h,\nabla\mathcal{D}\rangle,
\end{aligned}
\end{equation}
where in the last equality we have used the following formulae
\begin{equation}\label{identity}
\Delta_fh=-|\nabla h|^2+h_t-\mathcal{D}=-\frac{F}{\lambda t}-\left(1-\frac{1}{\lambda}\right)|\nabla h|^2.
\end{equation}
Substituting  \eqref{impform} into \eqref{bake-em2} yields
\begin{equation*}
\begin{aligned}
\left(\Delta_f-\frac{\partial}{\partial t}\right)F&\geq-\frac{F}{t}
+\frac{2t}{m+n}(\Delta_f h)^2-2(m+n-1)Kt|\nabla h|^2\\
&\quad-2\langle\nabla h,\nabla F\rangle+2(\lambda-1)t\langle\nabla h,\nabla\mathcal{D}\rangle
+\lambda t\Delta_f\mathcal{D}.
\end{aligned}
\end{equation*}
Further using \eqref{identity}, the above inequality becomes
\begin{equation}
\begin{aligned}\label{impfor2}
\left(\Delta_f-\frac{\partial}{\partial t}\right)F&\geq-\frac{F}{t}
+\frac{2t}{m+n}(|\nabla h|^2+\mathcal{D}-h_t)^2-2(m+n-1)Kt|\nabla h|^2\\
&\quad-2\langle\nabla h,\nabla F\rangle+2(\lambda-1)t\langle\nabla h,\nabla\mathcal{D}\rangle
+\lambda t\Delta_f\mathcal{D}.
\end{aligned}
\end{equation}
In the following we will compute the last two terms in the inequality \eqref{impfor2}. We
first notice that
\begin{equation*}
\begin{aligned}
2(\lambda-1)t&\left\langle\nabla h,\nabla\big(p(e^h)^{\alpha-1}\big)\right\rangle
+\lambda t\Delta_f\big(p(e^h)^{\alpha-1}\big)\\
&=2(\lambda\alpha-1)t(e^h)^{\alpha-1}\nabla p\nabla h
+(\alpha-1)(\lambda\alpha+\lambda-2)tp(e^h)^{\alpha-1}|\nabla h|^2\\
&\quad+\lambda t(e^h)^{\alpha-1}\Delta_fp
+\lambda(\alpha-1)tp(e^h)^{\alpha-1}\Delta_fh\\
&=2(\lambda\alpha-1)t(e^h)^{\alpha-1}\nabla p\nabla h+\lambda t(e^h)^{\alpha-1}\Delta_fp
-(\alpha-1)p(e^h)^{\alpha-1}F\\
&\quad+(\alpha-1)(\lambda\alpha-1)tp(e^h)^{\alpha-1}|\nabla h|^2,
\end{aligned}
\end{equation*}
where in the last equality we have used the formulae \eqref{identity}. Similar to the above
equality, we also have
\begin{equation*}
\begin{aligned}
2(\lambda-1)t&\left\langle\nabla h,\nabla\big(q(e^h)^{\beta-1}\big)\right\rangle
+\lambda t\Delta_f\big(q(e^h)^{\beta-1}\big)\\
&=2(\lambda\beta-1)t(e^h)^{\beta-1}\nabla q\nabla h+\lambda t(e^h)^{\beta-1}\Delta_fq
-(\beta-1)q(e^h)^{\beta-1}F\\
&\quad+(\beta-1)(\lambda\beta-1)tq(e^h)^{\beta-1}|\nabla h|^2.
\end{aligned}
\end{equation*}
Combining the above two equalities, we have
\begin{equation*}
\begin{aligned}
2(\lambda-1)t&\langle\nabla h,\nabla\mathcal{D}\rangle+\lambda t\Delta_f\mathcal{D}\\
&=2(\lambda\alpha-1)t(e^h)^{\alpha-1}\nabla p\nabla h+\lambda t(e^h)^{\alpha-1}\Delta_fp
-(\alpha-1)p(e^h)^{\alpha-1}F\\
&\quad+(\alpha-1)(\lambda\alpha-1)tp(e^h)^{\alpha-1}|\nabla h|^2\\
&\quad+2(\lambda\beta-1)t(e^h)^{\beta-1}\nabla q\nabla h+\lambda t(e^h)^{\beta-1}\Delta_fq
-(\beta-1)q(e^h)^{\beta-1}F\\
&\quad+(\beta-1)(\lambda\beta-1)tq(e^h)^{\beta-1}|\nabla h|^2\\
&\quad+2(\lambda-1)t\nabla \mu\nabla h+\lambda t\Delta_f\mu.
\end{aligned}
\end{equation*}
Finally substituting this into \eqref{impfor2} gives the proof of the lemma.
\end{proof}

In the following, we will apply Lemma \ref{Lem3.1} and the localized technique of
Li-Yau \cite{[Li-Yau]} and the author \cite{[Wu10]} to give parabolic gradient estimates
for the positive smooth solutions to the equation \eqref{Yameq} on smooth metric measure
spaces.
\begin{proof}[Proof of Theorem \ref{mainpara}]
Firstly, we introduce an auxiliary cut-off function and its useful properties. This
cut-off function is very important in the following proof.

We choose any $C^2$ cut-off function $\tilde{\varphi}$ on $[0,\infty)$ such that
$\tilde{\varphi}(r)\equiv1$ for $r\in[0,1]$, $\tilde{\varphi}(r)=0$ for $r\in[2,\infty)$,
and $0\leq\tilde{\varphi}(r)\leq1$; meanwhile $\tilde{\varphi}$ satisfies
\[
-c_1\leq\frac{\tilde{\varphi}'(r)}{\tilde{\varphi}^{1/2}(r)}\leq 0
\quad\mathrm{and}\quad
\tilde{\varphi}''(r)\geq-c_1
\]
for some universal positive constant $c_1$. Let
\[
\varphi(x)=\tilde{\varphi}\left(\frac{r(x)}{R}\right),
\]
where $r(x)$ denotes the distance between $x$ and $x_0$ in $M$. Then
$\mathrm{supp}\varphi\subseteq B(x_0,2R)$ and $\varphi|_{B(x_0,R)}\equiv1$.
We shall consider the function $\varphi F$ in $H_{2R,T}$. By the argument of
Calabi \cite{[Cala]}, by using approximation, we can assume without loss of
generality that that $\varphi(x)\in C^2(M)$ with support in $B(x_0,2R)$.
By a easy computation, we have
\begin{equation}\label{formu0}
\frac{|\nabla\varphi|^2}{\varphi}\leq\frac{c_1^2}{R^2}
\end{equation}
and
\begin{equation}\label{formu01}
\Delta_f\varphi=\frac{\tilde{\varphi}'
\Delta_fr}{R}+\frac{\tilde{\varphi}''|\nabla r|^2}{R^2}
\end{equation}
in $B(x_0,2R)$.
On the other hand, since $\mathrm{Ric}^m_f\geq-(m+n-1)K$ for some
$K\geq 0$, the generalized Laplacian comparison theorem (see
\cite{[WW]}) gives that
\[
\Delta_fr\leq(m+n-1)\sqrt{K}\coth(\sqrt{K}\,r).
\]
Since $\coth$ is decreasing, and ${\tilde{\varphi}}'=0$ when $r(x)<R$, by
\eqref{formu01}, this implies
\begin{equation}
\begin{aligned}\label{formu3}
\Delta_f\varphi&\geq-\frac{c_1}{R}(m+n-1)\sqrt{K}\coth(\sqrt{K}R)-\frac{c_1}{R^2}\\
&\geq-\frac{(m+n)c_1(1+R\sqrt{K})}{R^2},
\end{aligned}
\end{equation}
where we have used the inequality $\sqrt{K}\coth(\sqrt{K}R)\leq\frac 1R(1+\sqrt{K}R)$.

Secondly, we will apply the $f$-Laplacian operator $\Delta_f$ to the function
$\varphi F$ and get a useful inequality. Then we apply the maximum principle argument
to the inequality in a compactly supported set and obtain the Li-Yau gradient estimate.

For any $0<\tau\leq T$, if $\varphi F\leq0$ in $H_{2R,\tau}$, then the desired
estimate follows. Now we assume $\max_{(x,t)\in H_{2R,\tau}}(\varphi F)>0$.
Let $(x_1,t_1)$ be a point where $\varphi F$ achieves the positive maximum, where
$x_1\in B(x_0,2R)$ and $0<t_1\leq\tau$. Clearly, at $(x_1,t_1)$, we have
\begin{equation}\label{formu4}
\nabla(\varphi F)=0,\quad F_t\geq0\quad\mathrm{and}\quad \Delta_f(\varphi
F)\leq0.
\end{equation}
From now on all calculations below will be at $(x_1,t_1)$. Applying Lemma \ref{Lem3.1} to the
following equality
\[
\Delta_f(\varphi F)=F(\Delta_f\varphi)+2\langle\nabla\varphi,\nabla
F\rangle+\varphi(\Delta_fF),
\]
and using \eqref{formu0}, \eqref{formu01}, \eqref{formu3}, \eqref{formu4} and
the fact $e^h=u$, we get that
\begin{equation*}
\begin{aligned}
0&\geq\Delta_f(\varphi F)\\
&\geq-F\frac{(m+n)c_1(1+R\sqrt{K})}{R^2}-2F\frac{|\nabla\varphi|^2}{\varphi}\\
&\quad+\varphi\left[-\frac{F}{t_1}-2\langle\nabla h,\nabla F\rangle
-(\alpha-1)pu^{\alpha-1}F-(\beta-1)qu^{\beta-1}F\right]\\
&\quad+\frac{2t_1\varphi}{m+n}\Big[|\nabla h|^2+pu^{\alpha-1}+qu^{\beta-1}+\mu-h_t\Big]^2\\
&\quad+t_1\varphi|\nabla h|^2\Big[(\alpha-1)(\lambda\alpha-1)pu^{\alpha-1}
+(\beta-1)(\lambda\beta-1)qu^{\beta-1}-2(m+n-1)K\Big]\\
&\quad+2t_1\varphi\Big[(\lambda\alpha-1)u^{\alpha-1}\langle\nabla h,\nabla p\rangle
+(\lambda\beta-1)u^{\beta-1}\langle\nabla h,\nabla q\rangle
+(\lambda-1)\langle\nabla h,\nabla\mu\rangle\Big]\\
&\quad+\lambda t_1\varphi\Big[u^{\alpha-1}\Delta_fp+u^{\beta-1}\Delta_fq+\Delta_f\mu\Big]\\
&\geq F\left[-\frac{(m+n)c_1(1+R\sqrt{K})+2c^2_1}{R^2}-\frac{\varphi}{t_1}
-(\alpha-1)pu^{\alpha-1}\varphi-(\beta-1)qu^{\beta-1}\varphi\right]\\
&\quad+2F\langle\nabla h,\nabla\varphi\rangle+\frac{2t_1\varphi}{m+n}\Big[|\nabla h|^2+pu^{\alpha-1}+qu^{\beta-1}+\mu-h_t\Big]^2\\
&\quad+t_1\varphi|\nabla h|^2\Big[(\alpha-1)(\lambda\alpha-1)pu^{\alpha-1}
+(\beta-1)(\lambda\beta-1)qu^{\beta-1}-2(m+n-1)K\Big]\\
&\quad+2t_1\varphi\Big[(\lambda\alpha-1)u^{\alpha-1}\langle\nabla h,\nabla p\rangle
+(\lambda\beta-1)u^{\beta-1}\langle\nabla h,\nabla q\rangle
+(\lambda-1)\langle\nabla h,\nabla\mu\rangle\Big]\\
&\quad+\lambda t_1\varphi\Big[u^{\alpha-1}\Delta_fp+u^{\beta-1}\Delta_fq+\Delta_f\mu\Big].
\end{aligned}
\end{equation*}

Multiplying both sides of the above inequality by $t_1\varphi$, using the assumptions of
$p(x,t)$, $q(x,t)$, $\mu(x,t)$ and $\varphi(x)$ in Theorem \ref{main}, recalling that
$0\leq\varphi\leq1$, we in fact get that
\begin{equation}
\begin{aligned}\label{formu6}
0&\geq-t_1\varphi F\left[\frac{(m{+}n)c_1(1+R\sqrt{K})+2c^2_1}{R^2}
+\frac{1}{t_1}+[(\alpha-1)p]^+u^{\alpha-1}+[(\beta-1)q]^+u^{\beta-1}\right]\\
&\quad-2c_1R^{-1}t_1F|\nabla h|\varphi^{3/2}+\frac{2t^2_1\varphi^2}{m+n}\Bigg\{\Big[|\nabla h|^2+pu^{\alpha-1}+qu^{\beta-1}+\mu-h_t\Big]^2\\
&\quad+\frac{m{+}n}{2}|\nabla h|^2\Big[[(\alpha{-}1)(\lambda\alpha{-}1)p]^-u^{\alpha-1}
{+}[(\beta{-}1)(\lambda\beta{-}1)q]^-u^{\beta-1}{-}2(m{+}n{-}1)K\Big]\Bigg\}\\
&\quad-2t_1^2\varphi^{1/2}|\nabla h|\Big[|\lambda\alpha-1|a_1u^{\alpha-1}+|\lambda\beta-1|a_2
u^{\beta-1}+(\lambda-1)a_3\Big]\\
&\quad+\lambda t_1^2\left[\inf_{H_{2R,T}}\left(u^{\alpha-1}b_1+u^{\beta-1}b_2\right)+b_3\right].
\end{aligned}
\end{equation}
In the above inequality, we denote
\[
p^+:=\sup_{(x,t)\in H_{R,T}}\{p^+(x,t),0\}\quad \mathrm{and}\quad
p^-:=\inf_{(x,t)\in H_{R,T}}\{p^-(x,t),0\},
\]
for any $p(x,t)\in C^\infty(H_{R,T})$, where
\[
p^+(x,t):=\max\{p(x,t),0\}\quad \mathrm{and}\quad p^-(x,t):=\min\{p(x,t),0\}.
\]
We let
\[
y:=\varphi|\nabla h|^2
\]
and
\[
z:=\varphi(h_t-pu^{\alpha-1}-qu^{\beta-1}-\mu).
\]
Then \eqref{formu6} can be rewritten as
\begin{equation}
\begin{aligned}\label{formu7}
0&\geq-\varphi F\left[\frac{(m+n)c_1(1+R\sqrt{K})+2c^2_1}{R^2}t_1+1\right]\\
&\quad-\varphi F\left[[(\alpha-1)p]^+\sup_{H_{2R,T}}\{u^{\alpha-1}\}t_1
+[(\beta-1)q]^+\sup_{H_{2R,T}}\{u^{\beta-1}\}t_1\right]\\
&\quad+\frac{2t^2_1}{m+n}\Bigg\{(y-z)^2-c_1(m+n)R^{-1}y^{1/2}(y-\lambda z)-(m+n)\tilde{K}y-(m+n)\gamma y^{1/2}\Bigg\}\\
&\quad+\lambda t_1^2\left[\inf_{H_{2R,T}}\left(u^{\alpha-1}b_1+u^{\beta-1}b_2\right)+b_3\right],
\end{aligned}
\end{equation}
where
\[
\tilde{K}{:=}(n+m-1)K-\frac 12[(\alpha-1)(\lambda\alpha-1)p]^-\kern-3pt\sup_{H_{2R,T}}\{u^{\alpha-1}\}
-\frac 12[(\beta-1)(\lambda\beta-1)q]^-\kern-3pt\sup_{H_{2R,T}}\{u^{\beta-1}\}
\]
and
\[
\gamma:=|\lambda\alpha-1|a_1\sup_{H_{2R,T}}\{u^{\alpha-1}\}+|\lambda\beta-1|a_2\sup_{H_{2R,T}}\{u^{\beta-1}\}+(\lambda-1)a_3.
\]
Inequality \eqref{formu7} is rather complicated and we want to simplify it so that
the inequality can be estimated efficiently. Indeed, we can follow the Li-Yau's
arguments \cite{[Li-Yau]} to estimate the third line of inequality \eqref{formu7}.
The similar argument also appeared in \cite{[Wu10]}. That is to say, we can use
the Cauchy-Schwarz inequality to get the following key inequality
\begin{equation*}
\begin{aligned}
(y-z)^2-&c_1(m+n)R^{-1}y^{1/2}(y-\lambda z)-(m+n)\tilde{K}y
-(m+n)\gamma y^{1/2}\\
&\geq \lambda^{-2}(y-\lambda
z)^2-\frac{(m+n)^2}{8}c^2_1\lambda^2(\lambda-1)^{-1}R^{-2}(y-\lambda z)\\
&\quad-\frac34 4^{-\frac13}(m+n)^{4/3}
\gamma^{4/3}\left(\frac{\lambda}{\lambda-1}\right)^{2/3}\varepsilon^{-1/3}
-\frac{(m+n)^2\lambda^2\tilde{K}^2}{4(1-\varepsilon)(\lambda-1)^2}
\end{aligned}
\end{equation*}
for any $0<\varepsilon<1$. Substituting this inequality into \eqref{formu7}
and arranging the terms yields
\begin{equation*}
\begin{aligned}
0&\geq-\varphi F\left[\frac{(m+n)c_1(1+R\sqrt{K})+2c^2_1}{R^2}t_1+1\right]\\
&\quad-\varphi F\left[[(\alpha-1)p]^+\sup_{H_{2R,T}}\{u^{\alpha-1}\}t_1
+[(\beta-1)q]^+\sup_{H_{2R,T}}\{u^{\beta-1}\}t_1\right]\\
&\quad+\frac{2}{m+n}\left[\lambda^{-2}(\varphi F)^2-\frac{(m+n)^2c^2_1\lambda^2}{8(\lambda-1)R^2}t_1(\varphi F)\right]\\
&\quad+\frac{t_1^2}{m+n}\left[\frac{-3}{2}
\left(\frac{(m+n)^4\lambda^2}{4\varepsilon(\lambda-1)^2}\right)^{\frac13}\gamma^{\frac43}
-\frac{(m+n)^2\lambda^2\tilde{K}^2}{2(1-\varepsilon)(\lambda-1)^2}\right]\\
&\quad+\lambda t_1^2\left[\inf_{H_{2R,T}}\left(u^{\alpha-1}b_1+u^{\beta-1}b_2\right)+b_3\right]\\
&=\frac{2\lambda^{-2}}{m+n}(\varphi F)^2-\Phi\cdot(\varphi F)-t_1^2\Psi,
\end{aligned}
\end{equation*}
where
\begin{equation*}
\begin{aligned}
\Phi:&=\frac{(m+n)c_1(1+R\sqrt{K})+2c^2_1}{R^2}t_1+\frac{(m+n)c^2_1\lambda^2}{4(\lambda-1)R^2}t_1+1\\
&\quad+[(\alpha-1)p]^+\sup_{H_{2R,T}}\{u^{\alpha-1}\}t_1+[(\beta-1)q]^+\sup_{H_{2R,T}}\{u^{\beta-1}\}t_1
\end{aligned}
\end{equation*}
and
\[
\Psi:=\frac32\left(\frac{(m+n)\lambda^2}{4\varepsilon(\lambda-1)^2}\right)^{\frac13}\gamma^{\frac43}
+\frac{(m+n)\lambda^2\tilde{K}^2}{2(1-\varepsilon)(\lambda-1)^2}
-\lambda\left[\inf_{H_{2R,T}}\left(u^{\alpha-1}b_1+u^{\beta-1}b_2\right)+b_3\right],
\]
and where
\[
\gamma:=|\lambda\alpha-1|a_1\sup_{H_{2R,T}}\{u^{\alpha-1}\}+|\lambda\beta-1|a_2\sup_{H_{2R,T}}\{u^{\beta-1}\}+(\lambda-1)a_3
\]
and
\[
\tilde{K}{:=}(n+m-1)K-\frac 12[(\alpha-1)(\lambda\alpha-1)p]^-\kern-3pt\sup_{H_{2R,T}}\{u^{\alpha-1}\}
-\frac 12[(\beta-1)(\lambda\beta-1)q]^-\kern-3pt\sup_{H_{2R,T}}\{u^{\beta-1}\}.
\]
This implies
\begin{equation}
\begin{aligned}\label{fproof}
(\varphi F)(x_1,t_1)&\leq \frac{m+n}{4}\lambda^2\left[\Phi+
\left(\Phi^2+\frac
{8}{m+n}\lambda^{-2}t_1^2\Psi\right)^{1/2}\right]\\
&\leq\frac{m+n}{4}\lambda^2\left[\Phi+\Phi+\left(\frac
{8}{m+n}\lambda^{-2}t_1^2\Psi\right)^{1/2}\right]\\
&=\frac{m+n}{2}\lambda^2\Phi+t_1\lambda\Big(\frac{m+n}{2}\Psi\Big)^{1/2},
\end{aligned}
\end{equation}
where $\Phi$ and $\Psi$ are defined as above. Notice that on $B(x_0,R)\times[0,\tau]$,
since $\varphi\equiv1$ and $(x_1,t_1)$ is a maximum point of function $\varphi F$,
we have
\begin{equation}\label{fpro}
\sup\limits_{B(x_0,R)}F(x,\tau)\leq(\varphi F)(x_1, t_1).
\end{equation}
Substituting \eqref{fproof} into \eqref{fpro}, and using a easy fact that
$t_1\leq\tau$, we indeed show that
\begin{equation*}
\begin{aligned}
&\tau\cdot\sup\limits_{B(x_0,R)}\Big[|\nabla h|^2+\lambda pu^{\alpha-1}
+\lambda qu^{\beta-1}+\lambda\mu-\lambda h_t\Big](x,\tau)\\
&\leq\tau\lambda\Big(\frac{m+n}{2}\Psi\Big)^{1/2}
+\frac{m+n}{2}\lambda^2\left[\frac{(m+n)c_1(1+R\sqrt{K})+2c^2_1}{R^2}\tau
+\frac{(m+n)c^2_1\lambda^2}{4(\lambda-1)R^2}\tau\right]\\
&\quad+\frac{m+n}{2}\lambda^2+\frac{m+n}{2}\lambda^2
\left\{\left[(\alpha-1)p\right]^+\sup_{H_{2R,T}}\{u^{\alpha-1}\}\tau
+\left[(\beta-1)q\right]^+\sup_{H_{2R,T}}\{u^{\beta-1}\}\tau\right\},
\end{aligned}
\end{equation*}
which immediately implies the theorem because $\tau\in(0,T]$ is arbitrary.
\end{proof}

\section{Parabolic Liouville theorem}\label{sec4}
In this section, we will apply Theorem \ref{main} to give many sufficient conditions
on the growth of solutions and coefficients that guarantee the parabolic Liouville
theorems for various cases of the equation \eqref{Yameq}.

\medskip

First, we will prove Theorem \ref{app1} in the introduction. We consider the
case: $\alpha>1$, $\mu(x,t)\equiv \mu(x)$, $p(x,t)\equiv p(x)\not\equiv0$
and $q(x,t)\equiv 0$ in the equation \eqref{Yameq}.
\begin{proof}[Proof of Theorem \ref{app1}]
Under the assumptions of Theorem \ref{app1}, let $u(x,t)$ be a positive smooth
ancient solution to the equation \eqref{Yameqapp1}. For any fixed space-time point $(x_0,t_0)$,
since $\alpha>1$ and $K=0$, we apply Theorem \ref{main} to $u(x_0,t_0)$ in the
space-time set $B(x_0, R)\times(t_0-R, t_0]$ (i.e., let $T=R$ in $Q_{R,T}$), and obtain that
\begin{equation*}
\begin{aligned}
|\nabla\ln u(x_0,t_0)|&\le c(n)\left(1+\ln(D(Q_{R,R}))-\ln u(x_0,t_0)\right)\\
&\quad\times\left[\frac{1+\sqrt{\sigma^+}}{\sqrt{R}}+o(R^{-\frac{s}{2}})+o(R^{-\frac{s}{3}})+o[R^{(\widetilde{\kappa}-\kappa)\frac{\alpha-1}{2}}]
+\,o[R^{(\widetilde{\kappa}-\kappa)\frac{\alpha-1}{3}}]\right]
\end{aligned}
\end{equation*}
for sufficiently large $R>>2$, depending on $|t_0|$, where $R$ has been chosen sufficiently large
such that $R\geq |t_0|$. Since $u(x,t)=o(\left[r(x)+|t|\,\right]^{\widetilde{\kappa}})$ in $Q_{R,R}$,
then we have $D(Q_{R,R})=o(R^{\widetilde{\kappa}})$. For the number $\widetilde{\kappa}\in(0,\kappa)$ and
the fixed value $\ln u(x_0,t_0)$, letting $R\to \infty$ in the above inequality, we immediately get
\[
|\nabla u(x_0, t_0)|=0.
\]
Since $(x_0, t_0)$ was chosen arbitrarily, we conclude that $u(x,t)=u(t)$ for all $x\in M$.

\textbf{Case One}: $\mu(x)\equiv0$.

In this case \eqref{Yameqapp1} becomes
\[
u'(t)=p(x)u^\alpha(t),\quad p(x)\not\equiv0,\,\, \alpha>1.
\]
This equation implies $p(x)\equiv c$ for some constant $c<0$ due to the growth assumption on $p(x)$.
Therefore,
\[
u^{1-\alpha}(t)=c(1-\alpha)t+u^{1-\alpha}(0).
\]
Since $u$ is a positive ancient solution, from above we see that $u^{1-\alpha}(-\infty)<0$ for $t\to-\infty$.
This is a contradiction with the positivity of $u(x,t)$.

\textbf{Case Two}: $\mu(x)\not\equiv0$.

In this case, \eqref{Yameqapp1} reduces to
\[
u'(t)=\mu(x)u(t)+p(x)u^\alpha(t),\quad \mu(x)\not\equiv0,\,\,p(x)\not\equiv0,\,\,\alpha>1,
\]
which can be rewritten as a first-order ODE by
\[
[u^{1-\alpha}(t)]'=(1-\alpha)p(x)+(1-\alpha)\mu(x)u^{1-\alpha}.
\]
This equation has a general solution
\begin{equation}\label{ODE}
u^{1-\alpha}(t)=Ce^{(1-\alpha)\mu(x)t}-p(x)/\mu(x),
\end{equation}
where $C$ is a arbitrary constant, $\mu(x)\not\equiv0$ and $p(x)\not\equiv0$.

Since the left-hand side of \eqref{ODE} is independent of $x$, it must hold that
$p(x)/\mu(x)$ is constant. Moreover, if $\mu(x)\equiv c<0$ ($c>0$ is impossible due to the growth of $\mu(x)$),
then $p(x)\equiv c'<0$ ($c'>0$ is impossible due to the growth of $p(x)$).
In this case, \eqref{ODE} becomes
\[
u^{1-\alpha}(t)=\left(u^{1-\alpha}(0)+\frac{c}{c'}\right)e^{(1-\alpha)c\,t}-\frac{c}{c'},\quad t<0,
\]
where $\alpha>1$, $c<0$, $u(0)>0$ and $c/c'>0$. Letting $t\to-\infty$, we get
\[
u^{1-\alpha}(t)\to-\frac{c}{c'}<0,
\]
which is impossible since $u(x,t)>0$. So $\mu(x)$ is not constant and from \eqref{ODE},
we conclude that $C\equiv0$ and $u^{1-\alpha}(t)=-p(x)/\mu(x)$ is constant. Therefore
$\mu(x)\equiv-cp(x)$ for some constant $c>0$ and $u(x,t)\equiv c^{\frac{1}{\alpha-1}}$.
\end{proof}

\medskip

In Theorem \ref{app1}, if $\mu(x)$ and $p(x)$ are both negative constants, then they
naturally satisfy the conditions (1) and (2). In this case we are able to improve the
growth condition of $u(x,t)$ and get a simple statement, which was also proved by Dung,
Khanh and Ngo (see Corollary 2.6 in \cite{[DKN]}).

\begin{coro}\label{appthm3}
Let $(M,g,e^{-f}dv)$ be an $n$-dimensional complete smooth metric measure space with
$\mathrm{Ric}_f\geq0$.  There does not exist any positive ancient solution to equation
\begin{equation}\label{Yameqcor3}
\left(\Delta_f-\frac{\partial}{\partial t}\right)u+\mu\,u+p\,u^\alpha=0,\quad \alpha>1,\,\,\mu<0,\,\,p<0,
\end{equation}
such that $u(x,t)=e^{o(r^{\frac 12}(x)+|t|^{\frac 12})}$ near infinity. Moreover,
if $f$ is identically constant, then the growth of $u$ can be relaxed to
$u(x,t)=e^{o(r(x)+|t|^{\frac 12})}$.
\end{coro}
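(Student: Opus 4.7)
The plan is to apply Theorem \ref{main} to force any positive ancient solution to be constant in space, and then to derive a contradiction from the resulting ODE. Since $\mu$ and $p$ are constants, $|\nabla\mu|$, $|\nabla p|$, $q$, $\nabla q$ all vanish, and since $\mu<0$, $p<0$, $\alpha>1$ we have $\mu^+=p^+=0$ and $[(\alpha-1)p]^+=0$. Because $\mathrm{Ric}_f\geq 0$ we may take $K=0$. Thus, on $Q_{R,T}=B(x_0,R)\times[t_0-T,t_0]$, Theorem \ref{main} collapses to
\[
|\nabla\ln u|\leq c\!\left(1+\ln\frac{D}{u}\right)\!\left[\frac{1}{R}+\sqrt{\frac{\sigma^+}{R}}+\frac{1}{\sqrt{t-t_0+T}}\right]
\]
on $Q_{R/2,T}$, where $D=\sup_{Q_{R,T}} u$.

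Fix an arbitrary point $(x_0,t_0)$ and evaluate the estimate at $(x_0,t_0)$ with $T=R$. The growth hypothesis $u(x,t)=e^{o(r^{1/2}(x)+|t|^{1/2})}$ yields $\ln D=o(R^{1/2})$ on $Q_{R,R}$ (treating $|t_0|$ as fixed). Each product of the form $\ln D\cdot R^{-1}$, $\ln D\cdot\sqrt{\sigma^+/R}$, and $\ln D\cdot T^{-1/2}$ is then $o(1)$ as $R\to\infty$ (here $\sigma^+$ is a fixed geometric constant independent of $R$). Letting $R\to\infty$ gives $|\nabla\ln u(x_0,t_0)|=0$, so $u=u(t)$ on all of $M\times(-\infty,t_0^*]$.

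Substituting $u=u(t)$ into \eqref{Yameqcor3} reduces the PDE to the ODE $u'(t)=\mu u+pu^\alpha$, which is a Bernoulli equation. Setting $v:=u^{1-\alpha}>0$ gives the linear first-order ODE $v'=av+b$ with $a:=(1-\alpha)\mu>0$ and $b:=(1-\alpha)p>0$, whose general solution is $v(t)=Ce^{at}-b/a$. For $v>0$ we need $Ce^{at}>b/a>0$: this forces $C>0$, but then $v(t)>0$ only on the half-line $t>a^{-1}\ln(b/(aC))$, so $u$ cannot be extended to all $t\leq t_0^*$. Thus no positive ancient solution can exist, a contradiction.

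The moreover statement (case $f$ constant) is handled identically, except that in Theorem \ref{main} the term $\sqrt{\sigma^+/R}$ is absent by Remark \ref{rem1}, so we may instead choose $T=R^2$. Under the weaker growth $u=e^{o(r(x)+|t|^{1/2})}$ one has $\ln D=o(R)+o(T^{1/2})=o(R)$; with $T=R^2$ the products $\ln D\cdot R^{-1}$ and $\ln D\cdot T^{-1/2}$ are each $O(\epsilon)$ for any prescribed $\epsilon>0$, which again forces $\nabla\ln u(x_0,t_0)=0$ and reduces to the same ODE. The only nontrivial step is the bookkeeping in choosing $T$ as a function of $R$ to balance the spatial decay $1/R$ against the time-decay $1/\sqrt{T}$ against the allowed growth of $\ln D$; this is where the exponents $1/2$ and $1$ in the two growth regimes are exactly sharp for the method.
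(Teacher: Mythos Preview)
The proposal is correct and follows essentially the same approach as the paper: both apply Theorem~\ref{main} on $Q_{R,R}$ (respectively $Q_{R,R^2}$ when $f$ is constant), exploit the vanishing of all the coefficient terms since $\mu,p$ are negative constants, and let $R\to\infty$ to force $\nabla u\equiv0$, then reduce to the Bernoulli ODE and derive a contradiction. Your ODE analysis is written out more explicitly than the paper's (which simply refers back to the Case~Two argument of Theorem~\ref{app1}), but the content is identical.
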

\begin{proof}[Proof of Corollary \ref{appthm3}]
Because $\mu(x)$ and $p(x)$ are both negative constants, we know that the conditions
(1) and (2) in Theorem \ref{app1} naturally hold. Now let $u(x,t)$ be a positive smooth
ancient solution to the equation \eqref{Yameqcor3}, such that
\[
\ln u(x,t)=o(r^{\frac 12}(x)+|t|^{\frac 12})
\]
near infinity. Similar to the proof of Theorem \ref{app1}, for a fixed space-time point $(x_0,t_0)$, we apply
Theorem \ref{main} (i) to $u(x_0,t_0)$ in $Q_{R,R}=B(x_0, R)\times(t_0-R, t_0]$,
\begin{equation}\label{cgradest1}
|\nabla\ln u(x_0,t_0)|\le c(n)\left(1+o(\sqrt{R})-\ln u(x_0,t_0)\right)
\left[\frac{1}{\sqrt{R}}+\sqrt{\frac{\sigma^+}{R}}\,\right]
\end{equation}
for sufficiently large $R>>2$, depending on $|t_0|$. Then letting $R\to \infty$, we have
$|\nabla u(x_0, t_0)|=0$. Since $(x_0,t_0)$ is arbitrary, we get $u(x,t)=u(t)$ for all
$x\in M$. Finally, the conclusion follows by the same argument of Theorem \ref{app1}.

As for the case $f$ is constant, we assume that
\[
\ln u(x,t)=o(r(x)+|t|^{\frac 12})
\]
near infinity. We apply Theorem \ref{main} to $u(x_0,t_0)$ in
$Q_{R,R^2}=B(x_0, R)\times(t_0-R^2, t_0]$ and the proof is almost
the same as before except the corresponding gradient
estimate of \eqref{cgradest1} is replaced by
\[
|\nabla\ln u(x_0,t_0)|\le c(n)\left(1+o(R)-\ln u(x_0,t_0)\right)\cdot\frac 1R
\]
for sufficiently large $R$, depending on $|t_0|$.
\end{proof}

\medskip

Second, we consider the case: $\alpha=1$, $\mu(x,t)\equiv\mu(x)$, $p(x,t)\equiv p(x)$
and $q(x,t)\equiv0$ in the equation \eqref{Yameq}. In this case we prove that
\begin{theorem}\label{app2}
Let $(M,g,e^{-f}dv)$ be an $n$-dimensional complete smooth metric measure space with
$\mathrm{Ric}_f\geq0$. Assume that $\mu(x)$ in the following equation
\begin{equation}\label{Yameqapp2}
\left(\Delta_f-\frac{\partial}{\partial t}\right)u+\mu(x)u=0
\end{equation}
satisfies
\[
\mu^+|_{B(x_0,R)}=o(R^{-1})\quad \mathrm{and} \quad
\sup_{B(x_0,R)}|\nabla\mu|=o(R^{-\frac 32}),
\,\,\mathrm{as} \,\, R\to\infty.
\]
\begin{enumerate}
\item For $\mu(x)\not\equiv0$, there does not exist any positive ancient solution to the equation \eqref{Yameqapp2}
such that $u(x,t)=e^{o(r^{\frac 12}(x)+|t|^{\frac 12})}$ near infinity;
\item for $\mu(x)\equiv0$, there only exist constant positive ancient solution to the equation \eqref{Yameqapp2}
such that $u(x,t)=e^{o(r^{\frac 12}(x)+|t|^{\frac 12})}$ near infinity.
\end{enumerate}
\end{theorem}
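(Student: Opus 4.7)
The proof should follow the pattern of Corollary \ref{appthm3}: first use Theorem \ref{main} to force the solution to be spatially constant, then analyze the resulting ODE using the time-growth assumption. Since $\alpha = 1$ effectively (the only nonlinear-looking term $\mu(x)u$ is actually linear in $u$), we take $p \equiv q \equiv 0$ in Theorem \ref{main} and use $K=0$ since $\mathrm{Ric}_f \geq 0$. Fix an arbitrary space-time point $(x_0,t_0)$ and apply the estimate on $Q_{R,R} = B(x_0,R) \times (t_0 - R, t_0]$ with $R$ large enough that $R \geq |t_0|$. Setting $D := \sup_{Q_{R,R}} u$, Theorem \ref{main} reduces to
\[
|\nabla \ln u(x_0,t_0)| \leq c(n)\bigl(1 + \ln D - \ln u(x_0,t_0)\bigr)\left[\frac{1}{R} + \sqrt{\frac{\sigma^+}{R}} + \frac{1}{\sqrt{R}} + \sqrt{\mu^+} + \sup_{Q_{R,R}}|\nabla \mu|^{1/3}\right].
\]

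The growth assumption $u(x,t) = e^{o(r^{1/2}(x) + |t|^{1/2})}$ yields $\ln D = o(R^{1/2})$ on $Q_{R,R}$. The hypotheses $\mu^+|_{B(x_0,R)} = o(R^{-1})$ and $\sup_{B(x_0,R)}|\nabla \mu| = o(R^{-3/2})$ give $\sqrt{\mu^+} = o(R^{-1/2})$ and $\sup|\nabla \mu|^{1/3} = o(R^{-1/2})$, so the entire bracket is $o(R^{-1/2})$. Multiplying by the prefactor of size $o(R^{1/2})$ yields $|\nabla \ln u(x_0, t_0)| = 0$ in the limit $R \to \infty$. Since $(x_0, t_0)$ was arbitrary, $u(x,t) = u(t)$ depends only on $t$.

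Substituting into \eqref{Yameqapp2} gives $u'(t) = \mu(x)\,u(t)$. Positivity of $u$ implies $\mu(x) = u'(t)/u(t)$ is independent of $x$, so $\mu \equiv c$ for some constant $c \in \mathbb{R}$. The growth bound $\mu^+ = o(R^{-1})$ forces $c \leq 0$. For part (1) with $\mu \not\equiv 0$, we have $c < 0$ and $u(t) = u(0) e^{ct}$; as $t \to -\infty$, $\ln u(t) = ct$ grows linearly in $|t|$, contradicting $\ln u = o(|t|^{1/2})$. Hence no positive ancient solution with the prescribed growth exists. For part (2) with $\mu \equiv 0$, the equation $u'(t) = 0$ forces $u$ to be a positive constant.

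The main obstacle is less conceptual than arithmetic: one must check that the three $o$-rates balance precisely. The choice of cylinder $Q_{R,R}$ (i.e.\ $T = R$) is what converts the temporal prefactor $1/\sqrt{t-t_0+T}$ into $R^{-1/2}$; correspondingly the spatial exponent $r^{1/2}$ in the growth condition is chosen so $\ln D = o(R^{1/2})$ is exactly compensated by the universal $R^{-1/2}$ rate of the bracket. The exponents $-1$ on $\mu^+$ and $-3/2$ on $|\nabla \mu|$ are tuned to the same $R^{-1/2}$ threshold. Any relaxation of these rates would either permit the bracket to fail to vanish, or the prefactor to blow up faster than the bracket decays, and the conclusion $|\nabla \ln u| = 0$ would be lost.
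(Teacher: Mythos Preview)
Your proof is correct and follows essentially the same route as the paper: apply Theorem~\ref{main} on the cylinder $Q_{R,R}$ with $p\equiv q\equiv 0$ and $K=0$ to force $|\nabla\ln u|=0$, then analyze the resulting ODE $u'(t)=\mu(x)u(t)$ using the growth hypothesis. One minor imprecision: the bracket is only $O(R^{-1/2})$ rather than $o(R^{-1/2})$ (the terms $1/\sqrt{R}$ and $\sqrt{\sigma^+/R}$ are exactly of order $R^{-1/2}$), but since the prefactor $1+\ln D-\ln u(x_0,t_0)=o(R^{1/2})$ the product still tends to zero, so your conclusion stands.
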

\begin{remark}
There indeed exist many functions $\mu(x)$ satisfying the growth of $\mu$, such
as $\mu(x)=-e^{-x}/(x^2+1)$ in $\mathbb{R}^1$. If $\mu(x)$ is negative constant,
it naturally satisfies the growth of $\mu$. If $\mu(x)\equiv0$, the theorem
returns to a slight improvement of \cite{[Wu15]}. Notice that the growth condition
of $u$ is necessary. For example, let $u=e^{x+t}$, $f=-x$ and $\mu(x)=-1$ in
$\mathbb{R}^1$. Then $u$ is a positive eternal solution to the equation \eqref{Yameqapp2}.
\end{remark}

\begin{proof}[Proof of Theorem \ref{app2}]
Let $u(x,t)$ be a positive smooth ancient solution to the equation \eqref{Yameqapp2},
such that
\[
\ln u(x,t)=o(r^{\frac 12}(x)+|t|^{\frac 12})
\]
near infinity. For any point $(x_0,t_0)$, since $\alpha=1$ and $K=0$, applying Theorem \ref{main}
to $u(x_0,t_0)$ in the set $Q_{R,R}:=B(x_0, R)\times(t_0-R, t_0]$,
\[
|\nabla\ln u(x_0,t_0)|\le c(n)\left(1+o(\sqrt{R})-\ln u(x_0,t_0)\right)\left[\frac{1+\sqrt{\sigma^+}}{\sqrt{R}}
+o(R^{-\frac 12})\right]
\]
for sufficiently large $R>>2$, depending on $|t_0|$. Letting $R\to \infty$,
$|\nabla u(x_0, t_0)|=0$. Since $(x_0, t_0)$ is arbitrary, we know that
$u(x,t)=u(t)$ for all $x\in M$, which satisfies
\[
u'(t)=\mu(x)u(t).
\]
If $\mu(x)\equiv0$, then $u(x,t)\equiv c$ is positive constant. If $\mu(x)\not\equiv0$,
this implies $\mu(x)=C$ for some constant $C<0$ by the growth of $\mu(x)$.
So we have
\[
u(t)=u(0)e^{Ct},\quad t<0.
\]
This contradicts the assumption of theorem
$u(x,t)=e^{o(r^{\frac 12}(x)+|t|^{\frac 12})}$ near infinity. Hence the theorem follows.
\end{proof}

\medskip

In Theorem \ref{app2}, if we further assume $f$ is a constant, we
can improve the growth assumptions on $\mu(x)$ and $u(x,t)$.
This has also been obtained by Zhu \cite{[Zhu2]}.
\begin{coro}\label{corapp2}
Let $(M,g)$ be an $n$-dimensional complete noncompact Riemannian manifold with
$\mathrm{Ric}\geq0$. Assume that $\mu(x)$ in the following equation
\begin{equation}\label{cormeqapp2}
\left(\Delta-\frac{\partial}{\partial t}\right)u+\mu(x)u=0
\end{equation}
satisfies
\[
\mu^+|_{B(x_0,R)}=o(R^{-2})\quad \mathrm{and} \quad
\sup_{B(x_0,R)}|\nabla\mu|=o(R^{-3}),
\,\,\mathrm{as} \,\, R\to\infty.
\]
\begin{enumerate}
\item For $\mu(x)\not\equiv0$, there does not exist any positive ancient solution to the equation \eqref{cormeqapp2}
such that $u(x,t)=e^{o(r(x)+|t|^{\frac 12})}$ near infinity;
\item for $\mu(x)\equiv0$, there only exist constant positive ancient solution to the equation \eqref{cormeqapp2}
such that $u(x,t)=e^{o(r(x)+|t|^{\frac 12})}$ near infinity.
\end{enumerate}

\end{coro}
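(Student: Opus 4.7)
The plan is to follow the template of Theorem~\ref{app2}, but to exploit the fact that $f$ is constant by running Theorem~\ref{main} on the parabolic cylinder $Q_{R,R^2} := B(x_0,R) \times (t_0 - R^2, t_0]$ rather than on the ``self-similar'' box $Q_{R,R}$. Because $f$ is constant, the geometric quantity $\sigma^+$ vanishes (cf.\ Remark~\ref{rem1}); because $\mathrm{Ric}\ge 0$ we may take $K=0$; and since $p\equiv q\equiv 0$, the terms involving $p$, $q$ and their gradients in Theorem~\ref{main} drop out. Fixing an arbitrary space-time point $(x_0,t_0)$ and evaluating at $t = t_0$, so that $\sqrt{t-t_0+T}=R$, Theorem~\ref{main} gives
\[
|\nabla \ln u(x_0,t_0)| \le c(n)\bigl(1+\ln D - \ln u(x_0,t_0)\bigr)\!\left[\frac{2}{R}+\sqrt{\mu^+}+\sup_{Q_{R,R^2}}|\nabla\mu|^{1/3}\right],
\]
where $D := \sup_{Q_{R,R^2}} u$.

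Next I would plug the hypotheses into this inequality. On $Q_{R,R^2}$ one has $r(x)\le R$ and $|t|\le R^2+|t_0|$, so the admissible growth $u=e^{o(r(x)+|t|^{1/2})}$ yields $\ln D = o(R)$ as $R\to\infty$. The assumed decay $\mu^+=o(R^{-2})$ gives $\sqrt{\mu^+}=o(R^{-1})$, and $|\nabla\mu|=o(R^{-3})$ gives $|\nabla\mu|^{1/3}=o(R^{-1})$; thus the bracket is $O(R^{-1})$, and the right-hand side is $o(R)\cdot O(R^{-1})=o(1)$. Letting $R\to\infty$ forces $|\nabla u(x_0,t_0)|=0$. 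Since $(x_0,t_0)$ was arbitrary, $u$ is independent of $x$, i.e.\ $u=u(t)$.

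The ODE reduction is the routine finish. Plugging $u=u(t)$ back into the equation gives $u'(t)=\mu(x)u(t)$; as $u(t)>0$ depends only on $t$ while $\mu$ depends only on $x$, $\mu$ must be a constant $C$. The hypothesis $\mu^+=o(R^{-2})$ forces $C\le 0$. If $C=0$, then $u'=0$ and $u$ is a positive constant, which is exactly case (2). If $C<0$, then $u(t)=u(0)e^{Ct}=u(0)e^{|C||t|}$ as $t\to-\infty$, contradicting the permitted growth $u=e^{o(|t|^{1/2})}$; this disposes of case (1).

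The only real subtlety, and the step I view as the ``obstacle'' worth getting right, is the choice of scale: it is the pairing of the parabolic time interval of length $R^2$ with the ball of radius $R$ that makes $\ln D=o(R)$ rather than $o(R^{1/2})$, and that in turn is precisely what lets us relax the spatial growth of $u$ from $e^{o(r^{1/2})}$ (as in Theorem~\ref{app2}) to $e^{o(r)}$. Correspondingly, the sharpening of the required decay of $\mu$ from $o(R^{-1})$, $o(R^{-3/2})$ up to $o(R^{-2})$, $o(R^{-3})$ is exactly what is needed to keep the bracket at $O(R^{-1})$ on the larger cylinder; once these scalings are matched, everything else is a direct transcription of the argument for Theorem~\ref{app2}.
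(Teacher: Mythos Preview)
Your proposal is correct and follows essentially the same route as the paper: apply Theorem~\ref{main} on the parabolic cylinder $Q_{R,R^2}$, drop the $\sqrt{\sigma^+/R}$ term via Remark~\ref{rem1}, observe that the bracket is $O(R^{-1})$ while $\ln D=o(R)$, and then repeat the ODE reduction of Theorem~\ref{app2}. One small wording point: $\sigma^+$ itself need not vanish when $f$ is constant; rather, as Remark~\ref{rem1} states, the term $\sqrt{\sigma^+/R}$ becomes \emph{unnecessary} because the ordinary Laplacian comparison applies---but your citation shows you are using exactly that fact.
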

\begin{proof}[Proof of Corollary \ref{corapp2}]
The proof is nearly the same as the proof Theorem \ref{app2} with the only difference
is that we apply Theorem \ref{main} to $u(x_0,t_0)$ in the new space-time set
$Q_{R,R^2}=B(x_0, R)\times(t_0-R^2, t_0]$, and get that
\[
|\nabla\ln u(x_0,t_0)|\le c(n)\left(1+o(R)-\ln u(x_0,t_0)\right)\left[\frac{1}{R}
+o(R^{-1})\right]
\]
for sufficiently large $R>>2$, depending on $|t_0|$. We would like to point out that
the term $\sqrt{\frac{\sigma^+}{R}}$ in Theorem \ref{main} does not exist in
this case (see Remark \ref{rem1}).
\end{proof}
\begin{remark}
The growth condition of $u(x,t)$ is sharp in the space direction. For example, let
$u=e^{2x+t}$ and $\mu(x)=-3$ in $\mathbb{R}^1$. Obviously, $u$ is a positive
eternal solution to the equation \eqref{cormeqapp2}.
\end{remark}

\medskip

Third, we prove Theorem \ref{app3} in the introduction. Let $\alpha<1$,
$\mu(x,t)\equiv\mu(x)\not\equiv0$, $p(x,t)\equiv p(x)\not\equiv0$ and
$q(x,t)\equiv q(x)\equiv0$ in \eqref{Yameq}, and we have
\begin{proof}[Proof of Theorem \ref{app3}]
Let $u(x,t)$ be a positive ancient solution to the equation \eqref{Yameqapp3} such that
\[
(r(x)+|t|)^{-\widetilde{\kappa}}\leq u(x,t)\leq (r(x)+|t|)^{\delta}
\]
for some $\widetilde{\kappa}\in(0,\kappa)$ and $\delta>0$ near infinity. For any point $(x_0,t_0)$,
since $\alpha<1$ and $K=0$, applying Theorem \ref{main} to $u(x_0,t_0)$ in
$B(x_0, R)\times(t_0-R, t_0]$, we get that
\begin{equation}
\begin{aligned}\label{compinq}
&|\nabla\ln u(x_0,t_0)|\le c(n)\Big(1+c(\delta)\,\ln R-\ln u(x_0,t_0)\Big)\\
&\quad\times\left[\frac{1{+}\sqrt{\sigma^+}}{\sqrt{R}}+o(R^{-\frac{s}{2}})+o(R^{-\frac{s}{3}})
+o[R^{-\frac {\kappa}{2}(1-\alpha)}]\, (R^{-\widetilde{\kappa}})^{\frac{\alpha-1}{2}}
+\,o[R^{-\frac{\kappa}{3}(1-\alpha)}]\, (R^{-\widetilde{\kappa}})^{\frac{\alpha-1}{3}}\right]
\end{aligned}
\end{equation}
for sufficiently large $R>>2$, depending on $|t_0|$, where $R$ has been chosen sufficiently
large such that $R\geq |t_0|$. We would like to point out, in the above complicated
estimate, we have chosen $\min_{\mathcal{Q}_{R,T}}u(x,t)=(3R)^{-\widetilde{\kappa}}$ due to the
fact: $r(x)\leq R$ and
$|t|\leq |t_0|+R\leq 2R$.

Letting $R\to \infty$ in \eqref{compinq}, since $\widetilde{\kappa}\in(0,\kappa)$, we have
\[
|\nabla u(x_0, t_0)|=0.
\]
Since $(x_0, t_0)$ is arbitrary, we conclude that $u(x,t)=u(t)$ and it satisfies
\begin{equation}\label{Yamappc1}
u'(t)=\mu(x)u(t)+p(x)u^\alpha,\quad p(x)\not\equiv0,\,\, \alpha<1.
\end{equation}

\textbf{Case One}: $\mu(x)\equiv0$.

In this case \eqref{Yamappc1} becomes
\[
u'(t)=p(x)u^\alpha(t),\quad p(x)\not\equiv0,\,\, \alpha<1.
\]
Similar to the Case One in the proof of Theorem \ref{app1}, this is impossible.

\textbf{Case Two}: $\mu(x)\not\equiv0$.

Equation \eqref{Yamappc1} can be rewritten as a first-order ODE by
\[
[u^{1-\alpha}(t)]'=(1-\alpha)\mu(x)u^{1-\alpha}+(1-\alpha)p(x),
\]
which has a general solution
\[
u^{1-\alpha}(t)=Ce^{(1-\alpha)\mu(x)t}-p(x)/\mu(x),\quad \mu(x)\not\equiv0,\quad p(x)\not\equiv0,
\]
where $C$ is a arbitrary constant.
Similar to the proof of  Case Two in Theorem \ref{app1}, we have
$\mu(x)\equiv-cp(x)$ for some constant $c>0$ and $u^{1-\alpha}(t)=1/c$.
\end{proof}


\section{Elliptic Liouville theorem}\label{sec5}

In this section, we have two goals. One is that we apply Theorem \ref{main} to discuss
Liouville-type theorems for some elliptic versions of the equation \eqref{Yameq}
on complete (not necessarily compact) manifolds and smooth metric measure spaces.

\medskip

Firstly, we consider a special elliptic version of \eqref{Yameq} for $\alpha<1$
on a smooth metric measure space, which supplements Yang's result
\cite{[Yang]}.
\begin{theorem}\label{ellap1}
Let $(M,g,e^{-f}dv)$ be an $n$-dimensional complete smooth metric measure space with
$\mathrm{Ric}_f\geq0$. Assume that there exists a constant $\kappa>0$
such that $p(x)$ in the following elliptic equation
\begin{equation}\label{ellqap1}
\Delta_f u+p(x)u^\alpha=0, \quad \alpha<1,\,\,p(x)\not\equiv0,
\end{equation}
satisfies
\[
\sup_{B(x_0,R)}|\,p\,|=o[R^{-\kappa(1-\alpha)}]\quad \mathrm{and}
\quad \sup_{B(x_0,R)}|\nabla p|=o[R^{-\kappa(1-\alpha)}],
\, as\, R\to\infty.
\]
Then there does not exist any positive solution to \eqref{ellqap1} on $M$, such that
\[
r^{-\widetilde{\kappa}}(x)\leq u(x)\leq r^\delta(x)
\]
for some $\widetilde{\kappa}\in(0,\kappa)$ and $\delta>0$ near infinity.
\end{theorem}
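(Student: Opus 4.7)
The plan is to apply the elliptic gradient estimate of Theorem \ref{main} directly, regarding any positive solution $u(x)$ of \eqref{ellqap1} as a time-independent solution to \eqref{Yameq} with $\mu\equiv 0$, $q\equiv 0$, $K=0$, and the $\beta$-term absent. Fix an arbitrary point $x_0\in M$ and some $t_0\in\mathbb R$, and apply Theorem \ref{main} on $Q_{R,R}:=B(x_0,R)\times[t_0-R,t_0]$ for $R\gg 2$. The goal is to show $|\nabla u|(x_0)=0$ by letting $R\to\infty$; since $x_0$ is arbitrary, this forces $u$ to be constant, and the constancy together with $u>0$ will contradict $p\not\equiv 0$ after substitution into \eqref{ellqap1}.

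To carry this out, one bounds each factor in the estimate of Theorem \ref{main}. The upper growth assumption $u(x)\le r^\delta(x)$ gives $D:=\sup_{Q_{R,R}}u=O(R^\delta)$, so the prefactor satisfies $1+\ln(D/u(x_0))=O(\ln R)$. The lower bound $u(x)\ge r^{-\widetilde{\kappa}}(x)$, together with $\alpha-1<0$, yields
\[
\sup_{Q_{R,R}}u^{\frac{\alpha-1}{2}}\le C\,R^{\widetilde{\kappa}(1-\alpha)/2},\qquad \sup_{Q_{R,R}}u^{\frac{\alpha-1}{3}}\le C\,R^{\widetilde{\kappa}(1-\alpha)/3}.
\]
Combined with the coefficient decay $\sup_{B(x_0,R)}|p|=o\bigl(R^{-\kappa(1-\alpha)}\bigr)$ and $\sup_{B(x_0,R)}|\nabla p|=o\bigl(R^{-\kappa(1-\alpha)}\bigr)$, the two $p$-contributions in the bracket behave like
\[
\sqrt{[(\alpha-1)p]^++p^+}\,\sup u^{\frac{\alpha-1}{2}}=o\bigl(R^{-(\kappa-\widetilde{\kappa})(1-\alpha)/2}\bigr),
\]
\[
\sup|\nabla p|^{\frac13}\,\sup u^{\frac{\alpha-1}{3}}=o\bigl(R^{-(\kappa-\widetilde{\kappa})(1-\alpha)/3}\bigr).
\]
Since $\widetilde{\kappa}<\kappa$ and $1-\alpha>0$, both exponents are strictly positive. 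The remaining bracket terms $R^{-1}$, $\sqrt{\sigma^+/R}$, $1/\sqrt{t-t_0+T}=O(R^{-1/2})$, $\sqrt K=0$, and all $\mu$-related quantities vanish or decay as inverse powers of $R$.

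Multiplying the polynomially decaying bracket by the logarithmically growing prefactor $c(1+\ln(D/u(x_0)))=O(\ln R)$ and sending $R\to\infty$ gives $|\nabla\ln u|(x_0)=0$, hence $|\nabla u|(x_0)=0$. Arbitrariness of $x_0$ forces $u\equiv\mathrm{const}>0$ on $M$. Substituting this constant into \eqref{ellqap1} yields $p(x)u^\alpha\equiv 0$, and $u>0$ then forces $p\equiv 0$, contradicting $p\not\equiv 0$. The main technical point is the bookkeeping of exponents in the step above: the strict inequality $\widetilde{\kappa}<\kappa$ together with the sign of $1-\alpha$ is exactly what produces a strictly positive decay exponent $R^{-(\kappa-\widetilde{\kappa})(1-\alpha)/2}$ that dominates the logarithmic growth of $\ln(D/u(x_0))$. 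This parallels the parabolic Theorem \ref{app3}, where the analogous balance is achieved with $T=R$ supplying the necessary space-time decay; here, taking $T=R$ in the purely spatial problem serves the same role.
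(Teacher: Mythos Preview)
Your proof is correct and follows essentially the same approach as the paper's: apply Theorem \ref{main} with $\mu\equiv0$, $q\equiv0$, $K=0$ on $B(x_0,R)$ (or $Q_{R,R}$, which amounts to the same thing since $u$ is time-independent), use the upper bound $u\le r^\delta$ to control the logarithmic prefactor and the lower bound $u\ge r^{-\widetilde\kappa}$ together with $\alpha-1<0$ to control $\sup u^{(\alpha-1)/2}$ and $\sup u^{(\alpha-1)/3}$, then let $R\to\infty$ to force $|\nabla u|(x_0)=0$ and derive the contradiction from $p\not\equiv0$. Your exponent bookkeeping is exactly the paper's, just written out more explicitly.
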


\begin{proof}[Proof of Theorem \ref{ellap1}]
The proof is similar to the argument of Theorem \ref{app3}.
Let $u(x,t)$ be a positive smooth solution to the equation \eqref{ellqap1} such that
\[
r^{-\widetilde{\kappa}}(x)\leq u(x)\leq r^\delta(x)
\]
for some $\widetilde{\kappa}\in(0,\kappa)$ and $\delta>0$ near infinity. For any fixed point $x_0$,
since $\alpha<1$ and $K=0$, we apply Theorem \ref{main} to $u(x_0)$ in $B(x_0,R)$
(here function $u$ is independent of time $t$), and get that
\begin{equation*}
\begin{aligned}
|\nabla\ln u(x_0)|&\le c(n)\left(1+c(\delta)\ln R-\ln u(x_0)\right)\\
&\quad\times\left[\frac{1+\sqrt{\sigma^+}}{\sqrt{R}}+o[R^{-\frac{\kappa}{2}(1-\alpha)}]\,
(R^{-\widetilde{\kappa}})^{\frac{\alpha-1}{2}}
+\,o[R^{-\frac{\kappa}{3}(1-\alpha)}]\, (R^{-\widetilde{\kappa}})^{\frac{\alpha-1}{3}}\right]
\end{aligned}
\end{equation*}
for $R>2$, where we have used the fact that $\min_{x\in B(x_0,R)}u(x)=R^{-\widetilde{\kappa}}$.
Letting $R\to \infty$ and using $\kappa>\widetilde{\kappa}>0$, we get
\[
|\nabla u(x_0)|=0.
\]
Since point $x_0$ was chosen arbitrarily, we have that $u(x)\equiv c$ for some constant
$c>0$. Substituting $u(x)\equiv c$ into \eqref{ellqap1} we get $p(x)\equiv0$ which
is impossible due to the assumption of $p(x)$. Therefore we complete the proof.
\end{proof}

\medskip

Secondly, we consider a static version of the equation \eqref{Yameq} for $\alpha>1$ and
$\beta>1$ on a smooth metric measure space. Because the proof method of Theorem
\ref{app1} is suitable to the elliptic version of \eqref{Yameq}, we only state the result
without the proof.
\begin{theorem}\label{ellap2}
Let $(M,g,e^{-f}dv)$ be an $n$-dimensional complete smooth metric measure space with
$\mathrm{Ric}_f\geq0$. Assume that there exist three constants $s>0$, $\kappa>0$
and $k>0$ such that $\mu(x)$, $p(x)$ and $q(x)$ in the following elliptic equation
\begin{equation}\label{pro5.3}
\Delta_f u+\mu(x)u+p(x)u^\alpha+q(x)u^\beta=0, \quad \alpha>1,\,\,\beta>1,
\end{equation}
satisfy
\begin{enumerate}
\item $\mu^+\big|_{B(x_0,R)}=o(R^{-s})$\, and\, $\sup_{B(x_0,R)}|\nabla\mu|=o(R^{-s})$,
\, as\, $R\to\infty$;
\item $p^+|_{B(x_0,R)}=o[R^{-\kappa(\alpha-1)}]$ and $\sup_{B(x_0,R)}|\nabla p|=o[R^{-\kappa(\alpha-1)}]$,
\, as\, $R\to\infty$;
\item $q^+|_{B(x_0,R)}=o[R^{-k(\beta-1)}]$ and $\sup_{B(x_0,R)}|\nabla q|=o[R^{-k(\beta-1)}]$,
\, as\, $R\to\infty$.
\end{enumerate}
Let $u(x)$ be positive solution to the elliptic equation \eqref{pro5.3} on $M$ such that
\[
u(x)=o\,(r^{\widetilde{\kappa}}(x)\,)
\]
for some $\widetilde{\kappa}\in(0,l)$ near infinity, where $l:=\min\{\kappa,k\}$. Then $u(x)$
is a positive constant.
\end{theorem}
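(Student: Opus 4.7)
The plan is to imitate the proof of Theorem \ref{app1} in the simpler, time-independent setting: apply the local elliptic gradient estimate of Theorem \ref{main} to the positive static solution $u(x)$, regarded as a trivially time-independent solution on the space-time cylinder $Q_{R,R}=B(x_0,R)\times[t_0-R,t_0]$, and then let $R\to\infty$ with the evaluation point $(x_0,t_0)$ fixed. Since $u$ solves the stationary equation and has no time dependence, it satisfies the parabolic equation in $Q_{R,R}$ for every $R$, and the coefficients $\mu(x),p(x),q(x)$ inherit the decay hypotheses on every ball. The assumption $\mathrm{Ric}_f\geq 0$ lets us take $K=0$, and the growth assumption $u(x)=o(r^{\widetilde\kappa}(x))$ yields
\[
D:=\sup_{Q_{R,R}}u=\sup_{B(x_0,R)}u=o(R^{\widetilde\kappa}),
\]
so the logarithmic prefactor satisfies $1+\ln(D/u(x_0))=O(\ln R)$ as $R\to\infty$.

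Having fixed this setup, one substitutes into the gradient bound of Theorem \ref{main} at the point $(x_0,t_0)$ (noting $1/\sqrt{t-t_0+T}=1/\sqrt{R}$ at $t=t_0$, $T=R$) and checks that every term inside the bracket decays like a negative power of $R$. The geometric terms $1/R$, $\sqrt{\sigma^+/R}$, $1/\sqrt R$, $\sqrt K$ are harmless; the $\mu$-terms give $\sqrt{\mu^+}=o(R^{-s/2})$ and $\sup|\nabla\mu|^{1/3}=o(R^{-s/3})$ by hypothesis (1). For the $p$-group, since $\alpha>1$ one has $[(\alpha-1)p]^++p^+\leq\alpha\,p^+=o(R^{-\kappa(\alpha-1)})$ and $\sup u^{(\alpha-1)/2}=o(R^{\widetilde\kappa(\alpha-1)/2})$, so the product contributes
\[
o\!\bigl(R^{-(\kappa-\widetilde\kappa)(\alpha-1)/2}\bigr)+o\!\bigl(R^{-(\kappa-\widetilde\kappa)(\alpha-1)/3}\bigr),
\]
which tends to $0$ because $\widetilde\kappa<\kappa$. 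The $q$-group is treated identically using hypothesis (3) and $\widetilde\kappa<k$, producing decay rates $(k-\widetilde\kappa)(\beta-1)/2$ and $(k-\widetilde\kappa)(\beta-1)/3$. This is precisely where the hypothesis $\widetilde\kappa\in(0,l)$ with $l=\min\{\kappa,k\}$ is used.

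Since the whole bracket decays at a positive polynomial rate while the prefactor grows only like $\ln R$, letting $R\to\infty$ forces $|\nabla\ln u(x_0)|=0$; because $x_0$ was arbitrary, $u$ is a positive constant. No ODE analysis of the kind used in the two-case split of Theorem \ref{app1} is needed here, because the static statement asks only for constancy of $u$ (not for a relation among $\mu,p,q$). The main, and really only, obstacle is bookkeeping: one must verify that each of the six nonlinear decay factors (two from $\mu$, two from $p$, two from $q$) is matched correctly against the polynomial growth of $\sup u$, using the signs $\alpha-1>0$ and $\beta-1>0$ so that powers of $u$ enter with favorable exponents. Once this is done, the conclusion is immediate and the proof parallels Theorem \ref{app1}, which is why the authors omit it.
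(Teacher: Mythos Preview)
Your proposal is correct and follows exactly the approach the paper indicates: the authors omit the proof of Theorem \ref{ellap2}, stating only that ``the proof method of Theorem \ref{app1} is suitable to the elliptic version,'' and your argument is precisely that adaptation---apply Theorem \ref{main} on $Q_{R,R}$ with $K=0$, bound the bracket term-by-term using the decay hypotheses (1)--(3) and the growth $u=o(r^{\widetilde\kappa})$ with $\widetilde\kappa<\min\{\kappa,k\}$, and let $R\to\infty$ to force $\nabla u\equiv 0$. Your bookkeeping of the six decay rates and your observation that no ODE post-processing is needed (since only constancy of $u$ is claimed) are both accurate.
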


\medskip

Thirdly, we will apply Theorem \ref{main} to discuss some Yamabe-type problems of
complete Riemannian manifolds and smooth metric measure spaces.

\medskip

We now prove Theorem \ref{app4} by applying Theorem \ref{app1} to the equation \eqref{maineq}.

\begin{proof}[Proof of Theorem \ref{app4}]
In order to prove the theorem, we only need to discuss the nonexistence of positive
smooth solutions $u(x)$ to the equation \eqref{maineq} on $(M,g)$. In Theorem \ref{app1},
if we let
\[
u(x,t)=u(x),\quad f=0,\quad \alpha=\frac{n+2}{n-2},
\]
and
\[
\mu(x)=-\frac{n-2}{4(n-1)}\,\mathrm{S}, \quad
p(x)=\frac{n-2}{4(n-1)}\,\mathrm{\tilde{S}},
\]
then by the assumptions of Theorem \ref{app4}, we know that $\mathrm{S}\ge 0$ and such $u(x)$ does not exist
and hence the theorem follows.
\end{proof}

\medskip
Theorem \ref{generalcase} can be proved by applying Theorem \ref{ellap2} to the equation \eqref{weYaeq}.
\begin{proof}[Proof of Theorem \ref{generalcase}]
Assume that $u$ is a minimizer of the weighted Yamabe constant $\Lambda\le0$. By the proof of Proposition 4.1
in \cite{[Case]}, $u$ is a solution of the equation
\[
\Delta_f u-\frac{m+n-2}{4(m+n-1)}\mathrm{S}^m_fu-c_1e^{\frac fm}u^{\frac{m+n}{m+n-2}}+c_2u^{\frac{m+n+2}{m+n-2}}=0,
\]
where
\begin{align*}
c_1 & = \frac{2m(m+n-1)\Lambda}{n(m+n-2)}\left(\int_M u^{\frac{2(m+n)}{m+n-2}}\right)^{\frac{2m+n-2}{n}}
\left(\int_M u^{\frac{2(m+n-1)}{m+n-2}}e^{\frac fm}\right)^{-\frac{2m+n}{n}} , \\
c_2 & = \frac{(2m+n-2)(m+n)\Lambda}{n(m+n-2)}\left(\int_M u^{\frac{2(m+n)}{m+n-2}}\right)^{\frac{2m-2}{n}}
\left(\int_M u^{\frac{2(m+n-1)}{m+n-2}}e^{\frac fm}\right)^{-\frac{2m}{n}}.
\end{align*}

In order to prove the theorem, we only need to check the nonexistence of nonconstant positive
solutions $u(x)$ to the above equation under the assumptions of Theorem \ref{generalcase}.
Notice that $c_1\le0$ and $c_2\le0$ due to $\Lambda\le0$. In Theorem \ref{ellap2},
if we let
\[
\mu(x)=-\frac{m+n-2}{4(m+n-1)}\mathrm{S}^m_f,\quad p(x)=-c_1e^{\frac fm},\quad q(x)=c_2,
\]
and
\[
\alpha=\frac{m+n}{m+n-2}>1, \quad \beta=\frac{m+n+2}{m+n-2}>1,
\]
then the assumptions of Theorem \ref{generalcase} imply that all the conditions
of Theorem \ref{ellap2} are satisfied and hence such $u(x)$ does not exist,
which contradicts the existence of positive minimizer $u$.
\end{proof}

\medskip

When $\Lambda=0$, we can prove Theorem \ref{app5} by applying Theorem \ref{app2} to
the equation \eqref{weYaeq}.
\begin{proof}[Proof of Theorem \ref{app5}]
Assume that $u$ is a critical point of the weighted Yamabe quotient $\mathcal{Q}(u)$
with $u(x)=e^{o(r^{1/2}(x))}$ near infinity. Then such $u$ satisfies the equation
\eqref{weYaeq}. Since $\Lambda=0$, we have $c_1=c_2=0$ and the critical point in fact
is a minimizer. So \eqref{weYaeq} becomes
\[
\Delta_f u-\frac{m+n-2}{4(m+n-1)}\mathrm{R}^m_fu=0.
\]
In order to prove Theorem \ref{app5}, we only need to check the nonexistence of positive
solutions to the above equation under conditions of Theorem \ref{app5}. Indeed, if we let
\[
\mu(x)=-\frac{m+n-2}{4(m+n-1)}\mathrm{S}^m_f\quad \mathrm{and} \quad
u(x,t)=u(x),
\]
in Theorem \ref{app2}, then the assumptions of Theorem \ref{app5} satisfy all the
conditions of Theorem \ref{app2}. Therefore, by Theorem \ref{app2}, we know that
there does not exist any positive solution $u(x)$ with $u(x)=e^{o(r^{1/2}(x))}$
near infinity. So the theorem follows.
\end{proof}

\

The other goal of this section is that we apply Theorem \ref{mainpara} to study elliptic
Liouville-type theorems for some elliptic versions of the equation \eqref{Yameq} on a
smooth metric measure space. Here, we mainly apply Theorem \ref{mainpara} to
prove Theorem \ref{ellliou} and Corollary \ref{app6}.
\begin{proof}[Proof of Theorem \ref{ellliou}]
Let $u(x)$ be a positive smooth function to the equation
\[
\Delta_f u+p u^\alpha=0, \quad \alpha\leq1,
\]
where $p$ is a nonnegative constant. Since $\mathrm{Ric}^m_f\geq0$ and $a_i=b_i=0$ ($i=1,2,3$),
applying Theorem \ref{mainpara} to this equation, for any $\lambda>1$, we have the gradient estimate
\[
\frac{|\nabla u|^2}{\lambda u^2}+pu^{\alpha-1}\leq\sqrt{\frac{m+n}{2}}\Psi^{\frac12}
\]
by letting $R\to\infty$, where
\[
\Psi:=
\frac{(m+n)\lambda^2\tilde{K}^2}{2(1-\varepsilon)(\lambda-1)^2}
\quad\mathrm{and}\quad
\tilde{K}:=-\frac 12\big[(\alpha-1)(\lambda\alpha-1)p\big]^-\sup_{H_{2R,T}}\{u^{\alpha-1}\}.
\]
In the above estimate, if $\alpha=1$, then $\tilde{K}\equiv0$ and
\[
\frac{|\nabla u|^2}{\lambda u^2}+pu^{\alpha-1}=0,
\]
which implies the theorem. So we only consider the case $\alpha<1$.
In this case, we choose $\lambda=\lambda_0>1$ such that $\lambda_0\alpha<1$
and then $\tilde{K}\equiv0$, hence $\Psi\equiv0$, which also implies the theorem.
\end{proof}

Theorem \ref{ellliou} immediately implies Corollary \ref{app6} as follows.
\begin{proof}[Proof of Corollary \ref{app6}]
Assume that $u$ is a critical point of the weighted Yamabe quotient $\mathcal{Q}(u)$.
Then $u$ satisfies the equation \eqref{weYaeq}. Since $\Lambda=0$, we have $c_1=c_2=0$ and
hence \eqref{weYaeq} becomes
\[
\Delta_f u-\frac{m+n-2}{4(m+n-1)}\mathrm{S}^m_fu=0.
\]
In the following we only need to check the nonexistence of positive solutions to the
above equation under the condition of Corollary \ref{app6}. Indeed, since $\mathrm{S}^m_f$
is nonpositive constant, we know that $-\frac{m+n-2}{4(m+n-1)}\mathrm{S}^m_f$
is nonnegative constant. According to Theorem \ref{ellliou}, we immediately conclude
that there does not exist any nontrivial positive solution to the above equation.
So our assumption does not hold and the theorem follows.
\end{proof}

\bibliographystyle{amsplain}

\end{document}